\theoremstyle{plain} 
\newtheorem{theorem}{\bf Theorem}[section]
\newtheorem{lemma}[theorem]{\bf Lemma}
\newtheorem{corollary}[theorem]{\bf Corollary}
\newtheorem{proposition}[theorem]{\bf Proposition}
\newtheorem{fact}[theorem]{\bf Fact}
\theoremstyle{definition} 
\newtheorem{definition}[theorem]{\bf Definition}
\newtheorem{remark}[theorem]{\bf Remark}
\def\glay{\color[cmyk]{0, 0, 0, 0.6}}
\def\address#1#2{\begingroup
\noindent\parbox[t]{9.8cm}{%
\small{\scshape\ignorespaces#1}\par\vskip1ex
\noindent\small{\itshape E-mail}%
\/:\,#2\par\vskip4ex}\hfill%
\endgroup}%
\title{{\bf The gonality and the Clifford index of curves on a 
toric surface}}
\author{Ryo Kawaguchi}
\date{} 
\begin{document}
\maketitle
%
%
\footnote{
2010 \textit{Mathematics Subject Classification}.
Primary 14H51; Secondary 14M25, 52B20.}
\footnote{ 
\textit{Key words and phrases}. 
Algebraic curves, Gonality, Clifford index, Toric varieties.}
%
%
\begin{abstract}
We determine the gonality and the Clifford index for curves on a 
compact smooth toric surface. Moreover, it is shown that their gonality 
are computed by pencils on the ambient surface. From the geometrical view 
point, this means that the gonality can be read off from the lattice polygon 
associated to the curve. 
\end{abstract}
%
%
\section{Introduction}

In this paper, a {\it curve} always means a smooth irreducible projective 
curve over the complex number field unless otherwise mentioned. 
The {\it gonality} and the {\it Clifford index} are significant 
invariants in the study of linear systems on curves, which are 
defined by 
\begin{eqnarray*}
\begin{array}{rl}
{\rm gon}(C) &\!\!\! 
={\rm min}\{{\rm deg}f\mid f:C\rightarrow {\mathbb P}^{1}
\ {\rm surjective\ morphism}\}=
{\rm min}\{k\mid C\ {\rm has}\ g_{k}^{1}\},\\
{\rm Cliff}(C) &\!\!\! 
={\rm min}\{{\rm deg}D-2h^{0}(D)+2\mid D:{\rm divisor\ on}
\ C,\,h^{0}(D)\geq 2,\,h^{1}(D)\geq 2\}
\end{array}
\end{eqnarray*}
for a curve $C$. 
A curve of gonality $k$ is said to be $k$-gonal, and a pencil on a 
curve is called a {\it gonality pencil} if its degree is equal to the 
gonality. We cite several basic facts about the gonality and the 
Clifford index. 
Clearly, ${\rm gon}(C)=1$ means that $C$ is rational. 
The three statements ${\rm gon}(C)=2$, ${\rm Cliff}(C)=0$ and 
$C$ is elliptic or hyperelliptic are equivalent. Besides, 
${\rm Cliff}(C)=1$ holds if and only if $C$ is trigonal or a smooth 
plane quintic curve. On the other hand, Brill-Noether theory gives us 
upper bounds ${\rm gon}(C)\leq \left \lfloor\frac{g+3}{2}\right \rfloor$ 
and ${\rm Cliff}(C)\leq \left \lfloor\frac{g-1}{2}\right \rfloor$ 
for a curve of genus $g$, and equalities hold if the curve is general 
in moduli (cf. \cite{acgh}). 
Lastly, we mention a close relation between these 
two invariants: ${\rm gon}(C)-3\leq {\rm Cliff}(C)\leq 
{\rm gon}(C)-2$ (cf. \cite{copmar}). 

Although a considerable amount of work has revealed properties of 
the gonality and the Clifford index, it is still not easy to determine 
them for a given curve. Ideally, we would also like to see what kind of
gonality pencils does a curve have. In fact, however, it is 
difficult even to know whether the number of gonality pencils is finite 
or infinite. There are only two systematic results giving satisfactory 
answers for these questions: the cases of plane curves and curves on 
Hirzebruch surfaces (Theorem \ref{namba} and \ref{martens}). In this 
paper, we will study more general cases. Concretely, we consider curves 
on a compact smooth toric surface, and compute the gonality and the 
Clifford index (Theorem \ref{mthm}). 
From the point of view of the geometry of 
convex bodies, our result states that the gonality of such a curve 
coincides with the lattice width (see Definition \ref{baramosu}) of the 
lattice polygon associated to the curve. Namely, we can read off the 
gonality by observing the shape of the lattice polygon. 
This fact has been conjectured by Castryck and Cools in \cite{wou}, 
and our result gives an affirmative answer for it. 
In addition, Theorem \ref{mthm} tells us that 
apart from a few exceptional cases, a curve on a toric surface has a 
finite number of gonality pencils, and moreover, they become 
restrictions of preassigned $\mathbb P^{1}$-fibrations of the surface 
called toric fibrations. On the other hand, in the process to prove the 
main result, we also obtain the lower bound for the self-intersection 
number of a curve on a toric surface (Corollary \ref{zoma}). This 
formula by itself is suggestive and of wide application, although which 
is just a tool in this paper. 

Before we state the main theorem, let us review the cases of curves on 
the projective plane and Hirzebruch surfaces. First, the gonality and the 
Clifford index of plane curves are computed by the following formula. 
\begin{theorem}[\cite{nam,elms}]\label{namba}
Let $C$ be a smooth plane curve of degree $d$. If $d\geq 2$, then 
${\rm gon}(C)=d-1$ and any gonality pencil is cut out by lines 
passing through a fixed point on $C$. Furthermore, if $d\geq 5$, then 
${\rm Cliff}(C)=d-4$. 
\end{theorem}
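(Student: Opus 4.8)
The plan is to handle the two assertions — the gonality statement and the Clifford index statement — more or less separately, with the gonality computation providing the backbone.

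For the gonality, I would first establish the upper bound ${\rm gon}(C) \le d-1$ by exhibiting an explicit $g^1_{d-1}$: projection from a point $p \in C$ sends $C$ to $\mathbb P^1$ with degree $d-1$, since a general line through $p$ meets $C$ in $d-1$ further points. The real work is the lower bound ${\rm gon}(C) \ge d-1$, for which I would invoke the theory of linear series on plane curves together with the adjunction-type relation between the hyperplane bundle and the canonical bundle: on a smooth plane curve of degree $d$ one has $K_C = \mathcal O_C(d-3)$, so the canonical series is cut out by curves of degree $d-3$. Given a base-point-free $g^1_k$ with $k \le d-2$, I would compare it against $\mathcal O_C(1)$ using a bilinear-map / base-point-free-pencil-trick argument (or Castelnuovo-type counting via the residual series) to force $k \ge d-1$, deriving a contradiction from $k \le d-2$. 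I would then argue that equality in the projection bound forces any gonality pencil to arise this way: a $g^1_{d-1}$ on $C$, added to enough of the canonical series, must be compatible with $\mathcal O_C(1)$, and an analysis of the multiplication map $H^0(g^1_{d-1}) \otimes H^0(\mathcal O_C(1) - g^1_{d-1}) \to H^0(\mathcal O_C(1))$ shows the residual series is $\mathcal O_C(1)(-p)$ for a point $p$, i.e. the pencil is cut by lines through $p$. The main obstacle is precisely this rigidity step — ruling out ``sporadic'' gonality pencils not coming from a point — which is why I expect to lean on a clean statement from the linear-series literature (Namba, or Martens' refinement) rather than reprove it from scratch.

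For the Clifford index when $d \ge 5$, the general relation ${\rm gon}(C) - 3 \le {\rm Cliff}(C) \le {\rm gon}(C) - 2$ quoted in the introduction already pins ${\rm Cliff}(C) \in \{d-5, d-4\}$. To eliminate $d-5$, I would suppose a divisor $D$ contributes ${\rm Cliff}(D) = d-5$ and contradict it: such a $D$ would have to be, up to the standard reductions, a ``special'' divisor of small Clifford index, and by the classification of curves with small Clifford index (${\rm Cliff}(C) \le 1$ iff trigonal or plane quintic) together with a direct check that a plane curve of degree $d \ge 5$ is neither hyperelliptic nor trigonal, one sees the value $d-5$ is not attained unless $d = 5$, where it would give ${\rm Cliff}(C) = 0$, again excluded. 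Alternatively, and perhaps more cleanly, I would show directly that $\mathcal O_C(1)$ computes the Clifford index: $\deg \mathcal O_C(1) = d$, $h^0 = 3$, and $h^1 = g - d + 2 = \binom{d-1}{2} - d + 2 \ge 2$ for $d \ge 5$, giving ${\rm Cliff}(\mathcal O_C(1)) = d - 4$; combined with the lower bound ${\rm Cliff}(C) \ge {\rm gon}(C) - 3 = d - 4$, this closes the argument. The delicate point here is verifying that no divisor beats $\mathcal O_C(1)$, which again reduces to the low-genus/low-Clifford-index classification, so the genus hypothesis implicit in $d \ge 5$ is what makes everything go through.
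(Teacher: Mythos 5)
The paper does not prove this statement at all: Theorem \ref{namba} is quoted as a known result with the citations \cite{nam,elms}, so the only ``approach'' of the paper is to defer entirely to Namba and to Eisenbud--Lange--Martens--Schreyer. Measured against that, your proposal is partly a sketch and partly the same deferral, and as a standalone proof it has a genuine gap exactly where the theorem lives. The upper bound ${\rm gon}(C)\leq d-1$ by projection is fine, but the lower bound and, above all, the rigidity statement (every $g^{1}_{d-1}$ is cut out by the lines through a point of $C$) are only gestured at; you yourself say you would ``lean on a clean statement from the literature.'' Moreover the specific mechanism you propose for the rigidity step is circular as written: the multiplication map $H^{0}(A)\otimes H^{0}(\mathcal O_{C}(1)-A)\to H^{0}(\mathcal O_{C}(1))$ for a pencil $A=g^{1}_{d-1}$ only has content once one knows $h^{0}(\mathcal O_{C}(1)-A)\geq 1$, and since $\mathcal O_{C}(1)-A$ has degree one, its effectivity is precisely the assertion that $A$ is cut out by lines through a point of $C$. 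The correct classical route (Namba; see also the base-point-free pencil trick applied to $H^{0}(A)\otimes H^{0}(K_{C}-A)\to H^{0}(K_{C})$ with $K_{C}=\mathcal O_{C}(d-3)$, plus Noether-type estimates) has to be carried out, not invoked.

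On the Clifford index there is also an arithmetic slip: with ${\rm gon}(C)=d-1$, the Coppens--Martens inequalities give ${\rm Cliff}(C)\in\{d-4,\,d-3\}$, not $\{d-5,\,d-4\}$, so your first elimination argument is aimed at the wrong endpoint, and the appeal to the classification of curves with ${\rm Cliff}\leq 1$ cannot exclude a value like $d-3$ for general $d$. Your ``alternative'' argument, however, is the right one and is complete modulo the gonality statement: $\mathcal O_{C}(1)$ has degree $d$, $h^{0}=3$, and $h^{1}=g-d+2\geq 2$ for $d\geq 5$, so it contributes and gives ${\rm Cliff}(C)\leq d-4$, while ${\rm Cliff}(C)\geq{\rm gon}(C)-3=d-4$ gives the reverse inequality. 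So the Clifford half stands once the gonality half is genuinely proved; the gonality half is the missing content, and it is exactly the part both you and the paper outsource to \cite{nam,elms}.
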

%
%
%
%
Next, 
let $\Sigma_{e}$ be a Hirzebruch surface of degree $e$, and 
$\pi :\Sigma_{e}\rightarrow {\mathbb P}^{1}$ the ruling of $\Sigma_{e}$. 
Note that if $e=0$, then $\Sigma_{0}$ has another ruling $\pi '$ to 
$\mathbb P^{1}$ whose fiber is a section of $\pi$. In this case, 
we may assume that ${\rm deg}\pi |_{C}\leq {\rm deg}\pi '|_{C}$. 
For curves on Hirzebruch surfaces, Martens has computed the gonality. 
\begin{theorem}[\cite{mar2}]\label{martens}
Let $C$ be a smooth curve on $\Sigma_{e}$ which is not isomorphic 
to a smooth plane curve. Then ${\rm gon}(C)={\rm deg}\pi |_{C}$. 
In the case where $e\geq 1$, or $e=0$ and ${\rm deg}\pi |_{C}<
{\rm deg}\pi '|_{C}$, $\pi |_{C}$ is a unique gonality pencil on $C$. 
In the case where $e=0$ and ${\rm deg}\pi |_{C}={\rm deg}\pi '|_{C}$, 
$C$ has exactly two gonality pencils $\pi |_{C}$ and $\pi '|_{C}$. 
\end{theorem}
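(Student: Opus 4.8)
The plan is to prove the bound $\mathrm{gon}(C)\le\deg\pi|_C$ by hand and to obtain the reverse bound together with the classification of gonality pencils from a single structural claim. I write $C\sim a\Delta+bF$ in $\mathrm{Pic}(\Sigma_e)$, where $\Delta$ is the section with $\Delta^2=-e$ and $F$ is a fibre of $\pi$, so that $\deg\pi|_C=C\cdot F=a$; when $e=0$ I label the two rulings so that $a\le b$. Since $C$ is irreducible with $C\ne\Delta$, we have $a\ge1$ and $b\ge ae$, and adjunction gives $g(C)=(a-1)(b-1)-\binom{a}{2}e$. The case $a=1$ (where $C$ is rational) is trivial, so I assume $a\ge2$. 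The inequality $\mathrm{gon}(C)\le a$ is immediate from the degree-$a$ morphism $\pi|_C$ (and equally from $\pi'|_C$ when $e=0$ and $a=b$).

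Everything else I would deduce from the claim that \emph{every base-point-free gonality pencil $|A|$ on $C$ is cut out on $C$ by a fibration of $\Sigma_e$}: there is a morphism $\Sigma_e\to\mathbb P^1$ with connected fibres whose fibre class $\Phi$ satisfies $|A|=|\mathcal{O}_C(\Phi)|$ (possibly after discarding a fixed component of the pencil on the surface). The only such fibrations are $\pi$, with fibre class $F$, and, when $e=0$, also $\pi'$, with fibre class $F'$. Hence $\deg A=\Phi\cdot C$ equals $a$, or --- only in the case $e=0$ --- possibly $b$; since $\deg A\le a\le b$, in every case $\deg A=a$, so $\mathrm{gon}(C)=a$, and $|A|$ is $\pi|_C$, except that when $e=0$ and $a=b$ it may instead be $\pi'|_C$. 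These two are distinct classes on $C$ in the balanced non-planar case (where $a\ge3$), so $C$ has at most two gonality pencils, and exactly two precisely when $e=0$ and $a=b$.

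It remains to prove the claim, which is the technical heart. I would use an adjunction-theoretic lifting result of Serrano's type, with Reider's method in the background: a base-point-free pencil of degree small relative to $C^2$ on a smooth curve $C$ in a surface $S$ is induced by a pencil on $S$, outside a short list of exceptional pairs $(S,C)$. The numerical hypothesis is available here because $\deg A\le a$ whereas $C^2=2ab-a^2e$, which is far larger than $a$ unless $C^2$ is small; and I expect the exceptional configurations on a Hirzebruch surface to be exactly the curves isomorphic to a smooth plane curve. On $\Sigma_1$ this identification is transparent through the blow-down $\Sigma_1\to\mathbb P^2$ (for instance a curve disjoint from $\Delta$ maps isomorphically onto a smooth plane curve of degree $a$, whose gonality is $a-1<a$ by Theorem~\ref{namba}); such curves are excluded by hypothesis, and, as this example shows, the exclusion is genuinely needed.

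The main obstacle is precisely controlling the range of small $C^2$ among the non-excluded curves. When $e\ge2$ the following self-contained estimate succeeds: taking the gonality morphism $\phi_A$ and forming $\psi=(\pi|_C,\phi_A)\colon C\to\mathbb P^1\times\mathbb P^1$ with image $C'$ and $t=\deg(C\to C')$, one has $g(C)\le p_a(C')=(k/t-1)(a/t-1)\le(k-1)(a-1)$ for $k=\mathrm{gon}(C)$, and substituting the genus formula together with $b\ge ae$ yields $k\ge\tfrac{ae}{2}\ge a$. When $e\in\{0,1\}$ and $b$ is small this estimate is no longer sharp, and the curve need not be a plane curve (e.g.\ strict transforms on $\Sigma_1$ of singular plane curves); there one must either push the lifting theorem to its limit or analyse the singularities of $C'$ in $\mathbb P^1\times\mathbb P^1$ directly, using the smoothness of $C$ on $\Sigma_e$, and carefully separate these borderline non-planar cases from the excluded planar ones. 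Finally, the exact count in the balanced case $e=0$, $a=b$ can also be secured directly: any $g^1_a$ embeds $C$, via $\psi$ against an already-found $g^1_a$, as a smooth curve of bidegree $(a,a)$ in $\mathbb P^1\times\mathbb P^1$, for which $K_C\sim(a-2)\,\mathcal{O}(1,1)|_C$; this relation leaves room for only the two rulings.
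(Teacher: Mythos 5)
First, note that the paper does not prove this statement at all: it is quoted as background from Martens \cite{mar2}, so there is no internal proof to compare against. Your strategy --- reduce everything to the claim that a gonality pencil extends to a $\mathbb P^1$-fibration of $\Sigma_e$ via a Serrano-type lifting theorem --- is certainly in the spirit of how the paper later proves its own Theorem \ref{mthm} (Proposition \ref{kandata} via Theorem \ref{serrano}), so the outline is reasonable. But as written it is an outline, not a proof: the ``single structural claim'' you defer to is precisely the content of Martens' theorem, and you never verify it. Serrano's theorem requires $C^2>4p$ \emph{and} the nonexistence of an effective divisor $V$ with $1\le V^2<(C-V).V\le p$ and $C^2\le (p+V^2)^2/V^2$; ruling out such $V$ on $\Sigma_e$ (the analogue of the paper's Lemma \ref{romalia}) is the real work and is absent. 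You also explicitly leave open the cases $e\in\{0,1\}$ with $b$ small, and the identification of the exceptional curves with ``curves isomorphic to smooth plane curves'' is asserted, not proved; note this is an abstract condition (it must catch, e.g., the elliptic curves $C\sim 3\Delta+3F$ on $\Sigma_1$, which have $\deg\pi|_C=3$ but gonality $2$), so pinning it down is genuinely part of the theorem.

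There is also a concrete error in the one estimate you do carry out. For $e\ge 2$ you write $g(C)\le p_a(C')=(k/t-1)(a/t-1)$ for the image $C'$ of $\psi=(\pi|_C,\phi_A)$ with $t=\deg(C\to C')$. When $t>1$ only the normalization of $C'$ has genus bounded by $p_a(C')$, while $g(C)$ can be arbitrarily larger; so the chain is valid only when $\psi$ is birational onto its image, i.e.\ when the two pencils do not both factor through a covering $C\to C''$ of degree $t>1$. That dependent case (e.g.\ $C''$ rational, where $\pi|_C$ would factor through the gonality map) is exactly what must be excluded and is not addressed; and even where the Castelnuovo--Severi bound applies it gives only $k\ge a$, not uniqueness of the pencil, which still rests on the unproven lifting claim. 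The same issue recurs in your final count for $e=0$, $a=b$: you assume $\psi$ embeds $C$ as a smooth bidegree-$(a,a)$ curve, which again presupposes birationality (and smoothness of the image) rather than deriving it. So the proposal identifies a workable route but leaves the decisive steps --- the lifting statement with its exceptional list, and the independence/birationality arguments --- unproved.
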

In the case of Theorem \ref{martens}, since the set of gonality pencils 
is finite, we obtain ${\rm Cliff}(C)={\rm gon}(C)-2$ (cf. \cite{copmar}). 
Here we recall that 
the projective plane and Hirzebruch surfaces are simplest examples of 
toric surfaces. Hence, as a natural continuation of the above results, 
we expect to determine the gonality and gonality pencils 
of a curve on a toric surface. In order to give a precise statement, 
we recall some terminology. Let $S$ be a compact smooth toric 
surface. Then $S$ contains an algebraic torus $T$ as a nonempty Zariski 
open set together with an action of $T$ on $S$, which is a natural 
extension of the torus action of $T$ on itself. 
A prime divisor on $S$ is called a {\it $T$-invariant divisor} if 
it is invariant with respect to the above action. 
Any $T$-invariant divisor is isomorphic to the projective line.  
A blowing-down of a $T$-invariant divisor gives a morphism from 
$S$ to another toric surface. We call a composition of such morphisms an 
{\it equivariant morphism}. It is known that if $S$ is not a projective 
plane, there exist a finite number of equivariant morphisms 
from $S$ to Hirzebruch surfaces. Hence, by composing such equivariant 
morphisms and the rulings of Hirzebruch surfaces, we  obtain a 
finite number of $\mathbb P^{1}$-fibrations of $S$. We call 
them {\it toric fibrations}. 
%
%
Now, we state the main theorem of this paper. 
\begin{theorem}\label{mthm}
Let $S$ be a compact smooth toric surface, and $C$ a $k$-gonal nef curve 
of genus $g\geq 2$ on $S$ which is not isomorphic to a smooth plane 
curve. Put $q={\rm min}\{{\rm deg}\hspace{0.3mm}\varphi |_{C} \mid 
\varphi :{\it toric\ fibration\ of}\ S\}$. Then $q$ is equal to the lattice 
width $($see Definition \ref{baramosu}$)$ of the lattice polygon associated 
to $C$, and the followings hold. 
\vspace{-1.5mm}
\begin{itemize}
\setlength{\itemsep}{-2pt}
\item[{\rm (i)}] 
If $(g,q)\neq (4,4),(5,4),(10,6)$, then any gonality pencil 
on $C$ is the restriction of a toric fibration of $S$. 
\item[{\rm (ii)}] 
The equalities $k=\left\{
\begin{array}{ll}
q & ((g,q)\neq (4,4))\\
q-1 & ((g,q)=(4,4))
\end{array}\right.$ hold. 
\vspace{1mm}
\item[{\rm (iii)}] 
If $(g,q)\neq (10,6)$, then ${\rm Cliff}(C)=k-2$. 
\item[{\rm (iv)}] 
If $(g,q)=(10,6)$, then $C$ is a complete intersection of two 
hypercubics in $\mathbb P^{3}$. 
\end{itemize}
\end{theorem}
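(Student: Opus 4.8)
The plan is to work throughout with the lattice polygon $P$ attached to $C$, so that $g$ is the number of interior lattice points of $P$ and $C^{2}$ is the normalized area of $P$. First I would make the equality $q=\mathrm{lw}(P)$ explicit. A toric fibration of $S$ is, after a sequence of equivariant blow-ups $S'\to S$, induced by a primitive lattice vector $v$; if $F_{v}$ denotes the corresponding fibre class then $\deg(\varphi|_{C})=C\cdot F_{v}=\max_{x\in P}\langle v,x\rangle-\min_{x\in P}\langle v,x\rangle$. Minimizing over toric fibrations is the same as minimizing over primitive $v$, which is exactly the definition of the lattice width, so $q=\mathrm{lw}(P)$. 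In particular the toric fibration realizing the minimum restricts to a $g^{1}_{q}$ on $C$, giving $k\le q$ for free and reducing the theorem to the reverse inequality together with the assertion that gonality pencils come from toric fibrations.

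The core of the proof is then an extension argument. Let $|A|$ be a base-point-free $g^{1}_{k}$ on $C$, defining a morphism $f\colon C\to\mathbb{P}^{1}$ of degree $k$. Since $C$ is nef, I would apply a Serrano-type extension theorem: provided $C^{2}$ is large enough relative to $k$, the morphism $f$ is the restriction of a morphism $S'\to\mathbb{P}^{1}$ for a suitable equivariant modification $S'\to S$. The lower bound on $C^{2}$ needed here is precisely Corollary \ref{zoma}, which in polygon language compares the normalized area of $P$ with $\mathrm{lw}(P)$ and is what makes the numerics close. Once $f$ extends, passing to the Stein factorization yields a $\mathbb{P}^{1}$-fibration of the smooth toric surface $S'$ over $\mathbb{P}^{1}$, which is therefore a toric fibration; unwinding the equivariant blow-ups identifies $|A|$ with the restriction of a toric fibration of $S$. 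This proves (i) and, combined with the first step, forces $k=q$, which is (ii) outside the exceptional range.

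The estimate of Corollary \ref{zoma} can fail only when the normalized area of $P$ is small, and together with the classical inequality bounding the number of boundary lattice points in terms of the number of interior ones (Scott) this pins $(g,q)$ down to a short explicit list of pairs with $g$ small. For each such pair I would run through the finitely many lattice polygons up to unimodular equivalence, using the classification of lattice polygons with few interior points, discard the ones producing smooth plane curves, and verify the statement in the remaining finitely many cases by direct analysis of the curves obtained. The genuine survivors are $(g,q)=(4,4),(5,4),(10,6)$: for $(4,4)$ the curve $C$ has genus $4$, hence is trigonal, so $k=3=q-1$; for $(5,4)$ and $(10,6)$ one still has $k=q$, the extra gonality pencils being visible from the explicit polygons, so that (i) fails but (ii) persists.

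Finally, for (iii) I would combine the inequalities $\mathrm{gon}(C)-3\le\mathrm{Cliff}(C)\le\mathrm{gon}(C)-2$ with the finiteness of the set of gonality pencils supplied by (i): when $(g,q)\neq(4,4),(5,4),(10,6)$ this finiteness forces $\mathrm{Cliff}(C)=k-2$, and the cases $(4,4)$, $(5,4)$ I would check directly from their polygons. For $(g,q)=(10,6)$ one shows instead $\mathrm{Cliff}(C)=k-3$, i.e. $C$ is an exceptional curve in the sense of \cite{elms}; the only such curve carried by a smooth toric surface that is not a smooth plane curve is a complete intersection of two cubics in $\mathbb{P}^{3}$, which gives (iv). I expect the extension step to be the main obstacle: one must obtain the Serrano-type statement with a self-intersection threshold sharp enough that precisely $(4,4),(5,4),(10,6)$ escape it, which requires careful control of the borderline polygons where Corollary \ref{zoma} is an equality and a proof that no accidental low-degree pencil on $C$ fails to descend to the surface.
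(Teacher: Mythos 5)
Your overall strategy coincides with the paper's (lattice width $=q$ via toric fibrations, Serrano-type extension of a gonality pencil using the lower bound $C^{2}\geq\frac{3}{4}q^{2}$, and a separate analysis of the exceptional pairs), but the proposal leaves out precisely the steps that constitute the actual proof. Serrano's theorem does not say that $f$ extends whenever $C^{2}>4k$; it requires in addition that there be \emph{no} effective divisor $V$ on $S$ with $1\leq V^{2}<(C-V).V\leq k$ and $C^{2}\leq (k+V^{2})^{2}/V^{2}$. Excluding such a $V$ is the heart of the matter: the paper does this through Lemma \ref{romalia} (intersection-theoretic lower bounds for $C.V$ in terms of $h^{0}(S,V)$ and $h^{0}(S,V+K_{S})$, proved by a careful fan analysis), combined with Riemann--Roch to force $h^{0}(S,V)\geq V^{2}+2$, restriction to $C$ to produce a pencil of degree $\leq k-1$ (contradiction), and the sharpened bounds of Propositions \ref{k3}--\ref{k5} for small $q$ to eliminate the residual values of $s=V^{2}$; the single surviving numerical coincidence $27=\frac34\cdot 6^{2}=(6+3)^{2}/3$ is what isolates $(g,q)=(10,6)$, via the equality analysis in Proposition \ref{34} and Lemma \ref{g10}. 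None of this is replaced by anything in your sketch --- you explicitly defer it as ``the main obstacle'' --- so the central claim (i), and with it (ii) outside the exceptional range, is not established. A smaller but real gap of the same kind: after extending $f$, you assert that a $\mathbb{P}^{1}$-fibration of a toric surface is automatically toric; the paper instead shows $h^{0}(S,F)\leq 2$, so that $\square_{F}$ is a segment and Fact \ref{jinmencho} applies.

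Two further points would fail as written. For $(g,q)=(4,4)$ you argue ``genus $4$, hence trigonal, so $k=3$''; a genus-$4$ curve can be hyperelliptic, and ruling out $k=2$ (and $k\leq 3$ when $(g,q)=(5,4)$) again requires the Serrano obstruction analysis together with the bound $C^{2}\geq 12$ resp.\ the polygon constraints of Proposition \ref{k4} --- this is exactly how the paper proves (ii) in those cases. For (iv) you propose to first show $\mathrm{Cliff}(C)=k-3$ and then quote the classification of Clifford-dimension-$3$ curves, but you give no argument that $C$ is exceptional; the paper goes the other way, using Lemma \ref{g10} to pin $\square_{C}$ down to the triangle of Fig.~\ref{fig29}, so that $C\sim -3K_{S}$ and the anticanonical map exhibits $C$ as the intersection of the cubic surface $T=\Phi_{|-K_{S}|}(S)$ with another cubic, after which $\mathrm{gon}=6$ and $\mathrm{Cliff}=3$ follow from Martens' work rather than feed into it. As it stands, the proposal is a correct outline of the architecture but omits the lemmas that make each step go through.
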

%
%
In the case $(g,q)=(4,4)$, since $C$ is trigonal by (ii), we see 
that $C$ has one or two gonality pencils. In Section \ref{exam}, we will 
show that both cases can occur. 
In the case $(g,q)=(5,4)$, 
the gonality of $C$ achieves the maximum of the upper bound ${\rm 
gon}(C)\leq \left \lfloor\frac{g+3}{2}\right \rfloor$. It follows 
that $C$ has infinitely many gonality pencils. 
If $(g,q)=(10,6)$, by virtue of (iv) and Martens' work \cite{mar1}, 
we see that ${\rm gon}(C)=6$, ${\rm Cliff}(C)=3$ and $C$ has infinitely 
many gonality pencils. 
By a simple consideration, we can rewrite Theorem \ref{mthm} as follows:
%
%
%
%
\begin{corollary}\label{mthm2}
Let $S$ and $C$ be as in Theorem \ref{mthm}. 
\vspace{-1.5mm}
\begin{itemize}
\setlength{\itemsep}{-2pt}
\item[{\rm (i)}] 
If $(g,k)\neq (4,3),(5,4),(10,6)$, then any gonality pencil 
on $C$ is the restriction of a toric fibration of $S$. 
\item[{\rm (ii)}] 
If $(g,k)\neq (4,3)$, then $k=q$. 
\item[{\rm (iii)}] 
If $(g,k)\neq (10,6)$, then ${\rm Cliff}(C)=k-2$. 
\item[{\rm (iv)}] 
If $(g,k)=(10,6)$, then $C$ is a complete intersection of two 
hypercubics in $\mathbb P^{3}$. 
\end{itemize}
\end{corollary}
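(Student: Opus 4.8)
The plan is to obtain Corollary \ref{mthm2} from Theorem \ref{mthm} by a purely formal translation: Theorem \ref{mthm} phrases its case distinctions in terms of the combinatorial quantity $q$ (the lattice width of the associated polygon), whereas the corollary phrases them in terms of the intrinsic gonality $k$, and the bridge between the two is part (ii) of the theorem. So the first step is to record (ii) in the convenient form of a rule sending $(g,q)$ to $(g,k)$: this rule is the identity except that $(4,4)$ is sent to $(4,3)$, because $k=q$ unless $(g,q)=(4,4)$, in which case $k=q-1=3$. In particular $(g,q)=(4,4)$ forces $(g,k)=(4,3)$, so conversely $(g,k)\neq(4,3)$ implies $(g,q)\neq(4,4)$ and hence $k=q$; this is exactly part (ii) of the corollary.

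Next I would translate the exceptional list $\{(4,4),(5,4),(10,6)\}$ appearing in parts (i) and (iii) of Theorem \ref{mthm}. Applying the rule above: $(4,4)\mapsto(4,3)$, while $(5,4)\mapsto(5,4)$ and $(10,6)\mapsto(10,6)$ since here $g\neq4$ and so $k=q$. Thus $(g,q)$ lies in the theorem's exceptional set only if $(g,k)$ lies in $\{(4,3),(5,4),(10,6)\}$, and by contraposition, excluding $(g,k)\in\{(4,3),(5,4),(10,6)\}$ guarantees that $(g,q)$ avoids the theorem's exceptional set, so Theorem \ref{mthm}(i) gives part (i) of the corollary. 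For (iii) and (iv) I would use that in genus $10$ one has $k=q$ automatically (as $g\neq4$), so $(g,k)=(10,6)$ is genuinely equivalent to $(g,q)=(10,6)$; Theorem \ref{mthm}(iii) then yields $\mathrm{Cliff}(C)=k-2$ whenever $(g,k)\neq(10,6)$, and Theorem \ref{mthm}(iv) yields the complete-intersection conclusion in the remaining case.

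There is no substantial obstacle here --- this is the ``simple consideration'' announced before the statement --- but one point deserves attention and is the only thing to be careful about: the rule $(g,q)\mapsto(g,k)$ is not injective on the small pairs involved, since a curve with $(g,q)=(4,3)$ (for instance a smooth curve of bidegree $(3,3)$ on $\mathbb{P}^1\times\mathbb{P}^1$) also satisfies $(g,k)=(4,3)$. Hence the value $(g,k)=(4,3)$ by itself does not detect whether $C$ is in the genuinely exceptional trigonal genus-$4$ situation coming from $(g,q)=(4,4)$, and this is why the hypotheses of parts (i) and (ii) of the corollary must simply discard $(g,k)=(4,3)$ rather than impose some sharper condition; the resulting statements are the strongest that follow formally from Theorem \ref{mthm}.
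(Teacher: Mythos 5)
Your proposal is correct and is exactly the ``simple consideration'' the paper alludes to: translating the exceptional pairs from $(g,q)$ to $(g,k)$ via Theorem \ref{mthm}(ii), with the only delicate point being that $(g,k)=(4,3)$ may arise from either $(g,q)=(4,3)$ or $(g,q)=(4,4)$, which you correctly identify. Since the paper offers no further argument for the corollary, there is nothing in your derivation that deviates from the intended one.
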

%
%
%
%
In Section \ref{ganirasu}, we review the theory of toric surfaces, which 
is the main stage of our study. The aim of Section \ref{lbd} is to reveal 
several properties of the self-intersection number of a curve on a toric 
surface, which will be utilized to prove the key 
proposition (Proposition \ref{kandata}) and Theorem \ref{mthm} in 
Section \ref{doruido}. Most proofs in Section \ref{lbd}, however, are 
just elementary and tedious computations for convex 
polygons in the affine plane. Hence the reader can skip them without 
losing the continuity of the paper. In Section \ref{exam}, as already 
mentioned after Theorem \ref{mthm}, we investigate trigonal curves of 
genus four. Finally, as an application of our results, we compute 
Weierstrass gap sequences at ramification points of a trigonal covering 
of $\mathbb P^{1}$ in Section \ref{appl}. In fact, gap sequences at 
such points are well studied and the classification of them has been 
already completed (\cite{cop1},\cite{cop2},\cite{kat},\cite{kim}). 
However, by combining Corollary \ref{mthm2} with results in \cite{kaw}, 
we can compute gap sequences in a completely different way and propose a 
novel geometric interpretation of the reason why the difference between 
types of gap sequences occurs. This approach can be adapted 
not only to trigonal curves but also to curves of higher gonality. Hence, 
as a generalization of the results in Section \ref{appl}, it is 
expected that we can classify Weierstrass gap sequences at ramification 
points of gonality pencils on a curve on a toric surface in the future. 
%
%
%
%
\section{Fans and lattice polygons}\label{ganirasu}

In this section, we briefly review basic notions in the theory of toric 
surfaces. 
For further background and 
applications of them, we refer the reader to \cite{oda} without explicit 
mention. We henceforth assume that a surface is always compact and smooth. 

For a toric surface $S$, there exists a {\it fan} 
$\Delta_{S}$, which is the division of $\mathbb R^{2}$ consisting of 
a finite number of half-lines starting from the origin 
called {\it cones} (see Fig. \ref{fan}). 
%
\begin{figure}[h]\hspace{-10mm}
\begin{picture}(400,127)
\multiput(180,25)(0,15){6}{\line(1,0){90}}
\multiput(195,13)(15,0){5}{\line(0,1){100}}
\multiput(180,69)(0,0.46){5}{\line(1,0){90}}
\multiput(224,13)(0.46,0){5}{\line(0,1){100}}
\multiput(186.9,30.6)(-0.26,0.26){6}{\line(1,1){75}}
\multiput(198.5,15)(-0.4,0.2){5}{\line(1,2){48.3}}
\multiput(225,69)(-0.2,0.4){5}{\line(-2,-1){43}}
\multiput(187.7,108.5)(-0.26,-0.26){6}{\line(1,-1){81}}
\multiput(225.4,71)(-0.4,-0.2){5}{\line(1,-2){29}}
\multiput(225.5,71)(-0.37,-0.25){5}{\line(2,-3){36}}
\multiput(224.9,69)(0.2,0.4){5}{\line(-2,1){43}}
\put(210,70){\circle*{5}}
\put(195,85){\circle*{5}}
\put(210,85){\circle*{5}}
\put(225,85){\circle*{5}}
\put(240,100){\circle*{5}}
\put(240,85){\circle*{5}}
\put(240,70){\circle*{5}}
\put(240,55){\circle*{5}}
\put(255,25){\circle*{5}}
\put(240,40){\circle*{5}}
\put(225,55){\circle*{5}}
\put(210,40){\circle*{5}}
\put(210,55){\circle*{5}}
\put(195,55){\circle*{5}}
\put(206,118.5){$\sigma(D_{1})$}
\put(241,118.5){$\sigma(D_{2})$}
\put(160,113){$\sigma(D_{d})$}
\put(271,108){\circle*{2}}
\put(275,101){\circle*{2}}
\put(278,93){\circle*{2}}
\put(170.8,101){\circle*{2}}
\put(168,93){\circle*{2}}
\put(167,85){\circle*{2}}
\put(219,0){$\Delta_{S}$}
\end{picture}
\caption{}\label{fan}
\end{figure}
%
Each cone $\sigma (D_{i})$ corresponds to a $T$-invariant 
divisor $D_{i}$, and a lattice point on $\sigma (D_{i})$ is called 
a {\it primitive element} if it is closest to the origin. We denote 
by $(x_{i},y_{i})$ the primitive element of $\sigma (D_{i})$, and by 
${\rm Pr}(S)$ the set of primitive elements of cones in $\Delta_{S}$. 
We assume that $(x_{1},y_{1})=(0,1)$. 
The assumption that $S$ is smooth means 
that $x_{i+1}y_{i}-y_{i+1}x_{i}=1$ for $i=1,\ldots ,d$, where we formally 
set $D_{d+1}=D_{1}$. The Picard group of $S$ is generated by the classes 
of $D_{1},\ldots ,D_{d}$. For instance, the canonical divisor of $S$ 
has the relation $K_{S}\sim -\sum_{i=1}^{d}D_{i}$. We next 
define a lattice polygon associated to a divisor on $S$, which 
is the essential notion in the study of curves on a toric surface. 
\begin{definition}
For a divisor $D=\sum_{i=1}^{d}n_{i}D_{i}$ on $S$, 
a {\it lattice polygon} associated to $D$ is defined by 
$\square_{D}=\{(z,w)\in {\mathbb R}^{2}\mid 
x_{i}z+y_{i}w\leq n_{i}\ {\rm for}\ 1\leq i\leq d\,\}$. 
\end{definition}
%
%
%
%
Lastly, we mention the structures of fibers of toric fibrations. 
We define ${\rm Pr}^{\ast}(S)=\{(x,y)\in {\rm Pr}(S)\mid 
(-x,-y)\in {\rm Pr}(S)\}$ and $M(u,v)=\{(x,y)\in {\rm Pr}(S)\mid 
uy-vx<0\}$ for integers $u$ and $v$. 
\begin{fact}\label{jinmencho}
For any primitive elements $(x_{i},y_{i})$ and $(x_{j},y_{j})$, we can 
uniquely write $(x_{j},y_{j})=
\alpha_{j}(x_{i},y_{i})+\beta_{j}(x_{i+1},y_{i+1})$ with some 
integers $\alpha_{j}$ and $\beta_{j}$. We can describe fibers of 
toric fibrations as follows: 
$$\textstyle \{{\it fibers\ of\ toric\ fibrations\ of}\ S\}=\bigg\{
\sum\limits_{(x_{j},y_{j})\in M(x_{i},y_{i})}|\beta_{j}|D_{j}\ \Big |
\ (x_{i},y_{i})\in {\rm Pr}^{\ast}(S)\bigg\}.$$
\end{fact}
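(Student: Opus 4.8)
The plan is to prove Fact \ref{jinmencho} by a direct analysis of equivariant morphisms and the combinatorics of fans. Recall that a toric fibration is obtained as a composition of an equivariant morphism $p:S\to \Sigma_e$ to a Hirzebruch surface followed by the ruling $\pi:\Sigma_e\to\mathbb P^1$ (or $\pi'$ when $e=0$). The fan of $\Sigma_e$ consists of two opposite primitive elements spanning the fiber direction together with two more cones, so a ruling of $\Sigma_e$ corresponds precisely to a pair $\pm(x,y)$ of primitive elements of $\Delta_{\Sigma_e}$. Under the equivariant morphism $p$, which is a sequence of blow-ups at torus-fixed points, the fan $\Delta_S$ is a refinement of $\Delta_{\Sigma_e}$, so a ruling of $S$ arises exactly when $\pm(x_i,y_i)$ are both primitive elements of $\Delta_S$, i.e.\ when $(x_i,y_i)\in{\rm Pr}^\ast(S)$. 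This matches the indexing set on the right-hand side, and different choices of $(x_i,y_i)\in{\rm Pr}^\ast(S)$ give the (finitely many) distinct toric fibrations of $S$.

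Having fixed such an $(x_i,y_i)$, the next step is to compute the fiber of the associated $\mathbb P^1$-fibration $\varphi:S\to\mathbb P^1$ as a $T$-invariant divisor. The fiber over the torus-fixed point of $\mathbb P^1$ corresponding to the $\mathbb P^1$ we are fibering over is supported on those $T$-invariant divisors $D_j$ whose cone $\sigma(D_j)$ is ``cut off'' from the base direction — concretely those $(x_j,y_j)$ lying strictly on one side of the line $\mathbb R\cdot(x_i,y_i)$, which is exactly the condition $x_iy_j-y_ix_j<0$ defining $M(x_i,y_i)$. The multiplicity with which $D_j$ appears in the fiber is read off from how $(x_j,y_j)$ projects onto the quotient lattice $\mathbb Z^2/\mathbb Z(x_i,y_i)$: writing $(x_j,y_j)=\alpha_j(x_i,y_i)+\beta_j(x_{i+1},y_{i+1})$, the coefficient $\beta_j$ is (up to sign, hence $|\beta_j|$) precisely the image of $(x_j,y_j)$ under the quotient map, which records the local intersection number of $D_j$ with a general fiber. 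Summing, the fiber class is $\sum_{(x_j,y_j)\in M(x_i,y_i)}|\beta_j|D_j$, which is the asserted formula. One should also check the linear-equivalence statement: the divisor $\varphi^\ast(\text{point})$ is a principal divisor associated to the character $(x_i,y_i)$ of $T$, so its class is forced, and evaluating $x_i z+y_i w$ against the primitive elements reproduces the coefficients above.

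A clean way to organize this is to pass to the quotient fan. The surjection $\mathbb Z^2\to\mathbb Z$, $(x,y)\mapsto$ the $(x_{i+1},y_{i+1})$-component relative to the basis $\{(x_i,y_i),(x_{i+1},y_{i+1})\}$, is exactly the map of lattices inducing $\varphi:S\to\mathbb P^1$; its kernel is $\mathbb Z(x_i,y_i)$, and $\mathbb P^1$ has the complete fan $\{\,\mathbb R_{\ge0},\mathbb R_{\le0}\,\}$ in $\mathbb Z$. General toric theory then says $\varphi^\ast([\{0\}])=\sum_j \beta_j D_j$ where the sum runs over rays mapping to the positive ray, and the sign of $\beta_j$ flips according to whether $(x_j,y_j)$ is on the $(x_{i+1},y_{i+1})$-side or the $(x_{d},y_{d})$-side of the line $\mathbb R(x_i,y_i)$; taking absolute values and noting that ``mapping into the open positive half-space'' is the condition $x_iy_j-y_ix_j<0$ yields $M(x_i,y_i)$ as the index set. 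Compatibility of the two orientations (choosing consistently which of $\pm(x_i,y_i)$ and which generator of $\mathbb Z$ to use) is what the absolute values absorb.

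The main obstacle I expect is bookkeeping rather than conceptual: first, verifying that every $\mathbb P^1$-fibration of $S$ factoring through a Hirzebruch ruling is accounted for exactly once by an element of ${\rm Pr}^\ast(S)$ (one must use that for a smooth complete toric surface the only complete nonsingular toric curves mapping to are $\mathbb P^1$, and rule out that two different primitive pairs give the same fibration unless they coincide); and second, pinning down the sign conventions so that the naive quotient-map coefficient $\beta_j$ agrees with $|\beta_j|$ in the statement — in particular checking the boundary rays $(x_i,y_i)$ and $(-x_i,-y_i)$ themselves (which have $\beta=0$ and are correctly excluded by the strict inequality in $M(u,v)$) and confirming that the two ``halves'' of the fan on either side of $\mathbb R(x_i,y_i)$ contribute the two rulings when $e=0$. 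None of this is deep, but it requires care with the lattice conventions set up around Figure \ref{fan}, especially the normalization $(x_1,y_1)=(0,1)$ and the smoothness relations $x_{i+1}y_i-y_{i+1}x_i=1$.
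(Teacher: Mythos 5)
Your argument is correct, and it is essentially the standard toric-geometry computation that the paper itself relies on: the paper states this as a background Fact with no proof, deferring to the general theory of toric surfaces (Oda), and your quotient-fan description — the lattice surjection with kernel $\mathbb{Z}(x_{i},y_{i})$ inducing $\varphi:S\rightarrow\mathbb{P}^{1}$, with $\varphi^{\ast}(\mathrm{pt})=\sum\beta_{j}D_{j}$ over the rays mapping to the positive ray — is exactly that argument. The sign bookkeeping you defer is immediate from the smoothness relation: $x_{i}y_{j}-y_{i}x_{j}=-\beta_{j}$, so $M(x_{i},y_{i})$ is precisely the set of rays with $\beta_{j}>0$, and the remaining point (that each $(x_{i},y_{i})\in{\rm Pr}^{\ast}(S)$ does give a fibration factoring through a Hirzebruch surface, by contracting $(-1)$-curves in fibers) is standard, as you indicate.
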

%
%
%
\section{The lower bound for the self-intersection number}\label{lbd}

Let $S$ be a toric surface. In this section, we will find the evaluation 
formula for the self-intersection number of a curve on $S$ for 
later use in the proof the key proposition (Proposition \ref{kandata}). 
First, we extend the notion of coprime. 
%
%
\begin{definition}\label{youganmajin}
For non-negative integers $x$ and $y$, we write $(x,y)=1$ if they 
satisfy the following property: If either $x$ or $y$ is zero, then 
the other one is one. If both $x$ and $y$ are positive, then they are 
coprime. 
\end{definition}
\begin{definition}\label{baramosu}
Let $D$  be a divisor on $S$, and 
$x$ and $y$ integers with $(|x|,|y|)=1$. We denote by $n(x,y)$ the 
minimal integer satisfying $\{(z,w)\mid xz+yw\leq n(x,y)\}\supset 
\square_{D}$. For $x$, $y$ and $n(x,y)$, we define 
\begin{eqnarray*}
\begin{array}{c}
l(D,(x,y))=\{(z,w)\in {\mathbb R}^{2}\mid xz+yw=n(x,y)\}, \\
L(D,(x,y))=\{(z,w)\in {\mathbb R}^{2}\mid xz+yw\leq n(x,y)\}, \\
d(D,(x,y))=n(x,y)+n(-x,-y). 
\end{array}
\end{eqnarray*}
In particular, we call ${\rm min}\{d(D,(x,y))\mid 
\mbox{$x, y$\,:\,integers with $(|x|,|y|)=1$}\}$ the {\it lattice width} 
of $\square_{D}$. 
\end{definition}
\begin{remark}\label{kyatapira}
By definition, if $(z_{1},w_{1})\in L(D,(x,y))$ and $(z_{2},w_{2})\in 
L(D,(-x,-y))$, then $d(D,(x,y))\geq x(z_{1}-z_{2})+y(w_{1}-w_{2})$. 
In addition, an easy computation gives 
$d(D,(x,y))=\sum_{(x_{j},y_{j})\in M(x,y)}(x_{j}y-y_{j}x)D.D_{j}$. 
\end{remark}
%
%
Let $C$ be a curve on $S$. 
By Fact \ref{jinmencho} and Remark \ref{kyatapira}, we see that 
$q={\rm min}\{d(C,(x_{i},y_{i}))\mid 
(x_{i},y_{i})\in {\rm Pr}^{\ast}(S)\}$. Without 
loss of generality, we can assume that $(0,1)\in {\rm Pr}^{\ast}(S)$ 
and $d(C,(0,1))=q$. In the case where $\sharp {\rm Pr}^{\ast}(S)=2$, 
that is, $\Delta_{S}$ contains only one line passing through the 
origin, we can assume that $(1,0)\in {\rm Pr}(S)$ and $(x,y)\notin 
{\rm Pr}(S)$ if $y<{\rm max}\{0,-x\}$. On the other hand, in the case 
where $\sharp {\rm Pr}^{\ast}(S)\geq 4$, we can assume 
that $(1,0)\in {\rm Pr}^{\ast}(S)$ and $d(C,(1,y))\geq d(C,(1,0))$ for 
any $(1,y)\in {\rm Pr}^{\ast}(S)$. 
In this paper, we will keep the above assumptions for $C$ and 
$\Delta_{S}$, and put $q'=d(C,(1,0))$. 
\begin{lemma}
Let $C$ be a nef curve on $S$. For integers $x\,(\geq 1)$ and 
$y$ with $(|x|,|y|)=1$, the inequality $d(C,(x,y))\geq q'$ holds. 
\end{lemma}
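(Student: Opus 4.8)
The plan is to reduce the inequality $d(C,(x,y))\geq q'$ to a statement about the lattice polygon $\square_C$ and then to exploit the normalization we have already imposed on the fan $\Delta_S$. Recall $q'=d(C,(1,0))=n(C,(1,0))+n(C,(-1,0))$ measures the horizontal width of $\square_C$, so the goal is to show that no direction $(x,y)$ with $x\geq 1$ gives a smaller ``width'' of $\square_C$ than the horizontal one. First I would fix a direction $(x,y)$ with $x\geq 1$ and $(|x|,|y|)=1$, and pick lattice-independent witnesses: points $(z_1,w_1)\in l(C,(x,y))$ on the supporting line in direction $(x,y)$ and $(z_2,w_2)\in l(C,(-x,-y))$ on the opposite one, chosen to be vertices of $\square_C$. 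By Remark \ref{kyatapira} we then have the clean inequality
\begin{eqnarray*}
d(C,(x,y))\geq x(z_1-z_2)+y(w_1-w_2),
\end{eqnarray*}
and the whole problem becomes: bound the right-hand side below by $q'$.

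Next I would split according to the sign structure of $w_1-w_2$. If $w_1=w_2$, then the right side is $x(z_1-z_2)\geq z_1-z_2$ (as $x\geq 1$), and $z_1-z_2$ is at least the horizontal width $q'$ by definition of $n(C,(\pm1,0))$, so we are done. The substantive case is $w_1\neq w_2$; by symmetry (replacing $(x,y)$ by $(-x,-y)$ swaps the two witness points, and $x\geq 1$ forces us to keep $x\geq 1$, but we can instead track which of $w_1,w_2$ is larger) I would arrange $w_1>w_2$, say, and then use the description $d(C,(x,y))=\sum_{(x_j,y_j)\in M(x,y)}(x_jy-y_jx)\,C.D_j$ from Remark \ref{kyatapira}, comparing the index set $M(x,y)$ with $M(1,0)=\{(x_j,y_j)\in{\rm Pr}(S)\mid y_j<0\}$. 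The key leverage is the standing assumption that $(x,y)\notin{\rm Pr}(S)$ whenever $y<\max\{0,-x\}$ (in the two-ray case) together with $d(C,(1,y))\geq d(C,(1,0))$ for $(1,y)\in{\rm Pr}^\ast(S)$ (in the case $\sharp{\rm Pr}^\ast(S)\geq 4$): these say precisely that, among primitive directions already present, the horizontal one is extremal, and I would propagate this to arbitrary $(x,y)$ by a continued-fraction / mediant argument, writing $(x,y)$ as an integer combination of consecutive primitive elements (Fact \ref{jinmencho}) and using nefness $C.D_j\geq 0$ to conclude each intermediate width dominates $q'$.

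The main obstacle I expect is the case analysis forced by the two possible shapes of $\Delta_S$ (exactly one line through the origin versus at least two), because the normalization available to us differs in the two situations, and in the two-ray case one must genuinely use the hypothesis ``$(x,y)\notin{\rm Pr}(S)$ if $y<\max\{0,-x\}$'' to rule out directions that would otherwise give a thinner slab. Concretely, the delicate point is when the witness vertices $(z_1,w_1)$ and $(z_2,w_2)$ have $w_1\neq w_2$ and the supporting lines in direction $(x,y)$ are ``tilted'' relative to the edges of $\square_C$ realizing the horizontal width: here one has to show the tilt cannot shorten the projection, which amounts to checking that the edge normals of $\square_C$ straddling the horizontal direction are themselves among the primitive elements (or dominated by them), so that the extremality hypothesis applies. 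This is exactly the kind of ``elementary and tedious computation for convex polygons'' the introduction warns about, and I would organize it by induction on the number of primitive elements strictly between $(1,0)$ and $(x,y)$ in the circular order, the base case being $(x,y)$ adjacent to $(1,0)$ where the hypothesis is immediate and the inductive step being a single mediant replacement combined with $C.D_j\geq 0$.
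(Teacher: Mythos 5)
Your opening move is the right one and matches the paper: apply Remark \ref{kyatapira} to two points extremal for $\pm(x,y)$ and reduce everything to bounding $x(z_1-z_2)+y(w_1-w_2)$ from below by $q'$. But the two places where you actually try to close the argument do not hold up. First, the easy case is justified backwards: if $(z_1,w_1)$ and $(z_2,w_2)$ are vertices of $\square_C$ on $l(C,(x,y))$ and $l(C,(-x,-y))$, then by definition of $n(C,(1,0))$ and $n(C,(-1,0))$ one has $z_1-z_2\leq q'$, not $\geq q'$; the horizontal width is the \emph{maximum} of $z_1-z_2$ over points of $\square_C$, so ``$z_1-z_2\geq q'$ by definition'' is false (a tilted parallelogram already gives a counterexample to that intermediate claim). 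Second, and more seriously, the proposed engine for the main case --- induction on the primitive elements between $(1,0)$ and $(x,y)$, with a ``mediant replacement'' step using only nefness --- is not a valid mechanism. The base case is not ``immediate'': the ray adjacent to $(1,0)$ in $\Delta_S$ is some $(m,\pm 1)$, and the standing normalization gives a lower bound $d(C,(1,y))\geq q'$ only for elements of the special form $(1,y)\in{\rm Pr}^{\ast}(S)$; nothing whatsoever is assumed about $d(C,v)$ for a general fan ray $v$. And the inductive step cannot work as stated: $d(C,\cdot)$ is the support function of the centrally symmetric body $\square_C+(-\square_C)$, hence subadditive in the direction argument, and $d(C,u+v)$ can be far smaller than both $d(C,u)$ and $d(C,v)$ (think of a long thin parallelogram whose long edges have direction $u+v$); so ``each intermediate width dominates $q'$'' does not propagate through mediants, with or without $C.D_j\geq 0$. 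What excludes such thin polygons is the global normalization (minimality of $q$ over ${\rm Pr}^{\ast}(S)$, the constraint on ${\rm Pr}(S)$ in the two-ray case), and your sketch never converts that into a step that survives the mediant operation.

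For comparison, the paper needs no induction at all. When $\sharp{\rm Pr}^{\ast}(S)=2$, the normalization on ${\rm Pr}(S)$ forces the edge of $\square_C$ on $l(C,(0,-1))$ to have full length $q'$, and evaluating the two-point estimate of Remark \ref{kyatapira} at its endpoints $(0,0)$ and $(q',0)$ gives $d(C,(x,y))\geq xq'\geq q'$ at once. When $\sharp{\rm Pr}^{\ast}(S)\geq 4$ (say $x,y>0$), it takes the largest $n$ with $(1,n)\in{\rm Pr}^{\ast}(S)$ and $nx\leq y$, and the auxiliary points $P_1=l(C,(1,n))\cap l(C,(1,0))$, $P_2=l(C,(-1,-n))\cap l(C,(-1,0))$ (corners of the bounding box when $n=0$); the single hypothesis $d(C,(1,n))\geq d(C,(1,0))=q'$ translates into $w_1\geq w_2$, and then the same two-point estimate yields $d(C,(x,y))\geq x(z_1-z_2)+y(w_1-w_2)=xq'+y(w_1-w_2)\geq q'$. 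So the only input needed is about the finitely many directions $(1,n)\in{\rm Pr}^{\ast}(S)$ together with supporting-line geometry; all intermediate directions are handled simultaneously by the corner points, which is exactly the device your proposal is missing.
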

\begin{proof}
(i) Consider the case where $\sharp {\rm Pr}^{\ast}(S)=2$. We denote 
by $O=(0,0)$ (resp. $P$) the vertex of $\square_{C}$ on $l(C,(0,-1))$ 
whose $z$-coordinate is minimal (resp. maximal). 
By a simple consideration, we see that $P=(q',0)$. Then 
by Remark \ref{kyatapira}, we have $d(C,(x,y))\geq xq'$. \\
(ii) In the case where $\sharp {\rm Pr}^{\ast}(S)\geq 4$, we will 
prove only the case where both $x$ and $y$ are positive. One can show 
other cases by a similar method. We denote by 
$n$ the maximal integer such that $(1,n)\in {\rm Pr}^{\ast}(S)$ and 
$nx-y\leq 0$, and define 
\begin{eqnarray*}
P_{1}=(z_{1},w_{1})=\left\{
\begin{array}{ll}
l(C,(1,n))\cap l(C,(1,0)) & (n\geq 1),\\
l(C,(1,0))\cap l(C,(0,-1)) & (n=0),
\end{array}\right.\hspace{3mm}\\
P_{2}=(z_{2},w_{2})=\left\{
\begin{array}{ll}
l(C,(-1,-n))\cap l(C,(-1,0)) & (n\geq 1),\\
l(C,(-1,0))\cap l(C,(0,1)) & (n=0).
\end{array}\right.
\end{eqnarray*}
In the case where $n\geq 1$, since 
$$d(C,(1,n))=z_{1}+nw_{1}-z_{2}-nw_{2}=q'+n(w_{1}-w_{2})\geq q',$$
we have $w_{1}\geq w_{2}$. Accordingly, we have 
$d(C,(x,y))\geq xz_{1}+yw_{1}-xz_{2}-yw_{2}\geq xq'$. Assume that 
$n=0$. Note that $x>y$ in this case. Since it is clear 
that $P_{1}\in L(C,(x,y))$ and $P_{2}\in L(C,(-x,-y))$,
by Remark \ref{kyatapira}, we have 
$d(C,(x,y))\geq xq'-yq\geq (x-y)q'$. 
\end{proof}
\begin{lemma}\label{ikkakuusagi}
For a nef curve $C$ on $S$, there exist a compact smooth toric 
surface $S_{0}$ and a curve $C_{0}$ on $S_{0}$ satisfying the 
following properties:\vspace{-4pt}
\begin{itemize}
\setlength{\itemsep}{-2pt}
\item[{\rm (i)}] 
$d(C_{0},(0,1))=q$, $d(C_{0},(1,0))=q'$ and 
$d(C_{0},(1,\pm 1))\geq q'$. 
\item[{\rm (ii)}] $C_{0}^{2}\leq C^{2}$. 
\item[{\rm (iii)}] The lattice polygon $\square_{C_{0}}$ has three or 
four vertices, and moreover, each of them is on one of the four 
lines $l(C_{0},(0,\pm 1))$, $l(C_{0},(\pm 1,0))$. 
\item[{\rm (iv)}] If $l(C_{0},(0,1))$ contains two distinct vertices, 
then they are $l(C_{0},(0,1))\cap l(C_{0},(1,0))$ and 
$l(C_{0},(0,1))\cap l(C_{0},(-1,0))$. A similar 
property holds for $l(C_{0},(0,-1))$. 
\item[{\rm (v)}] If $l(C_{0},(1,0))$ contains two distinct vertices, 
then one of them is $l(C_{0},(1,0))$ $\cap l(C_{0},$ $(0,1))$ or 
$l(C_{0},(1,0))\cap l(C_{0},(0,-1))$. A similar 
property holds for $l(C_{0},(-1,0))$. 
\end{itemize}
\end{lemma}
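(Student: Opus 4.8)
The plan is to construct $S_0$ and $C_0$ by a sequence of equivariant modifications of $S$, at each step either blowing down a $T$-invariant curve or inserting one, so as to simplify the shape of $\square_C$ while keeping control of the four key quantities $d(C,(0,\pm 1))$, $d(C,(\pm 1,0))$ and not increasing the self-intersection number. The guiding principle is that passing to a toric surface whose fan refines or coarsens $\Delta_S$ along the rays $(0,\pm 1),(\pm 1,0)$ changes $\square_C$ only by cutting it with half-planes $xz+yw\le n(x,y)$ that are already ``active'' (the bounding line $l(C,(x,y))$ meets the polygon), so $q$, $q'$ and $d(C,(1,\pm1))$ are preserved; meanwhile each elementary transformation that removes a vertex not lying on one of the four coordinate lines can only decrease $C^2$, by the intersection-theoretic formula for $C^2$ in terms of $\square_C$ (equivalently $C^2+2 = $ the normalized area/boundary data of the polygon, via $C.D_j$ and Remark~\ref{kyatapira}).

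First I would make precise which $T$-invariant rays we are allowed to add: exactly the rays in the four open sectors bounded by $(0,\pm1)$ and $(\pm1,0)$, and for each such ray $(x,y)$ with $x\ge 1$ we already know $d(C,(x,y))\ge q'$ by the preceding Lemma (and symmetrically $d(C,(-1,\pm1))\ge q'$), so that property (i) will survive. Then I would show that after adding enough rays and blowing down all $T$-invariant curves whose rays lie strictly between two consecutive ``active'' edge-directions of $\square_C$ inside each sector, the resulting polygon has all of its vertices on the lines $l(\cdot,(0,\pm1))$ and $l(\cdot,(\pm1,0))$: this is because a vertex of $\square_{C_0}$ off these four lines would be the intersection of two edges whose outward normals both lie in the interior of one of the four sectors, and one can eliminate such a configuration by an elementary transformation centered at that vertex, which strictly drops $C_0^2$. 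Iterating (the number of vertices is finite and $C_0^2$ is bounded below, or one argues by induction on the number of off-axis vertices) terminates in a polygon satisfying (iii). Properties (iv) and (v) are then essentially combinatorial bookkeeping: (iv) says that a horizontal edge $l(C_0,(0,1))$, if it is a genuine edge (contains two vertices), must be ``full'', i.e. run between its intersections with the two vertical lines — otherwise one of its endpoints would again be an off-axis vertex, contradicting (iii); (v) is the analogous but weaker statement for vertical edges, where only one endpoint is forced onto a horizontal line because the other end may abut an off-axis edge that has been removed.

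The main obstacle will be property (ii), keeping $C_0^2\le C^2$ simultaneously with the normalization in (i), since adding rays (inserting $(-1)$-curves) tends to \emph{decrease} $C^2$ while the final blow-downs needed to reach a \emph{smooth} $S_0$ with few vertices must be chosen compatibly; the delicate point is to sequence the blow-ups and blow-downs so that every intermediate surface stays smooth and the net effect on $C^2$ is a decrease. I expect this to come down to the observation that an elementary transformation at a smooth point of $C$ lying on a $T$-invariant curve leaves $C^2$ unchanged, whereas one at a point off $C$ decreases it, so the construction must route all modifications through points off $C$ — which is possible precisely because we are free to choose where on each $T$-invariant $\mathbb P^1$ to center the transformation. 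The remaining steps are the ``elementary and tedious'' polygon computations the introduction warns about: verifying that the normals stay in the correct sectors, that $n(x,y)$ is unchanged under the allowed operations, and that the vertex count genuinely drops at each stage.
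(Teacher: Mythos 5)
There is a genuine gap, and it lies exactly where you flag it yourself: property (ii), together with the simultaneous preservation of (i). More fundamentally, your mechanism cannot implement the construction. The lemma does not ask for a birational modification of the pair $(S,C)$: the paper's $C_{0}$ is a \emph{different} curve on a \emph{different} toric surface, obtained by direct surgery on the lattice polygon (the intermediate polygons $\square_{C_{1}},\dots ,\square_{C_{5}}$), and in general $\square_{C_{0}}$ -- for instance the triangle $Q_{1}Q_{4}Q$ of area $qq'/2$ in case (a) -- is not the polygon of any strict transform of $C$; even its number of interior lattice points (hence the genus of the associated curve) differs from that of $C$. Your toolkit of equivariant blow-ups and blow-downs cannot reach such a polygon: to stay in the toric category a blow-up must be centered at a torus-fixed point (so the freedom ``to choose where on each $T$-invariant $\mathbb P^{1}$ to center the transformation'' is not available), a blow-up changes the polygon of the strict transform only by a unit corner cut and only when that fixed point actually lies on $C$, and a blow-down of a $T$-invariant curve $E$ enlarges the polygon and increases the self-intersection by $(C.E)^{2}$. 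Your sign heuristics are also off: blowing up a point \emph{off} $C$ leaves $C^{2}$ unchanged rather than decreasing it, so the proposed routing ``through points off $C$'' does not produce the required decrease.

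Even granting some corner-removal procedure, the blanket claim that eliminating an off-axis vertex ``can only decrease $C_{0}^{2}$'' while keeping (i) is unsubstantiated, and it is precisely the delicate point. Simply replacing the boundary arc between the distinguished vertices $O$ and $P_{1}$ by a segment does decrease the area (hence the self-intersection), but it can destroy the conditions $d(\cdot ,(1,\pm 1))\geq q'$; the paper restores them by taking the convex hull with the point $P_{2}=l(C,(1,1))\cap l(C,(1,0))$ or $P_{3}=l(C,(1,1))\cap l(C,(0,1))$, the choice depending on the sign of $z_{1}+w_{1}$, and the inequality $C_{1}^{2}-E_{2}^{\,2}=-a(z_{1}+w_{1})\leq 0$ is exactly the estimate showing that this compensation does not raise the area. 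Nothing in your outline supplies this trade-off (nor the analogous later steps producing $\square_{C_{3}},\square_{C_{4}},\square_{C_{5}}$ and cases (a)--(c)), so properties (i) and (ii) are never simultaneously secured; the ``combinatorial bookkeeping'' for (iv) and (v) also presupposes the normalized shape that this unproved step is supposed to deliver.
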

\begin{proof}
In this proof, we will gradually deform the polygon $\square_{C}$ 
toward $\square_{C_{0}}$. In the process of the deformation, we 
construct five polygons $\square_{C_{i}}$ ($i=1,\ldots ,5$). For 
simplicity, we abuse notation ``the properties (i) and 
(ii)'' for these curves $C_{i}$. 

We assume that the vertex of $\square_{C}$ on $l(C,(0,1))$ whose 
$z$-coordinate is minimal is the origin $O$, and 
denote by $P_{1}=(z_{1},w_{1})$ the vertex of $\square_{C}$ on 
$l(C,(1,0))$ whose $w$-coordinate is minimal. 
If either $z_{1}$ or $w_{1}$ is zero, we define $C_{1}=C$. In the case 
where neither $z_{1}$ nor $w_{1}$ is zero, we define a polygon $C_{1}$ 
by the following procedure. We first construct a 
polygon $\square_{E_{1}}$ 
from $\square_{C}$ by connecting $O$ and $P_{1}$. 
We put $P_{2}=(z_{1},w_{2})=l(C,(1,1))\cap l(C,(1,0))$ and 
$P_{3}=(z_{3},0)=l(C,(1,1))\cap l(C,(0,1))$ (see Fig. \ref{fig0}). 
%
\begin{figure}[h]\hspace{-5mm}
\begin{picture}(450,87)
\put(165,77){\line(-1,0){100}}
\put(164.9,77.2){\line(0,-1){77}}
\multiput(99.1,75.7)(0,0.4){5}{\line(-1,0){15.2}}
\multiput(99,75.9)(0.1,0.38){5}{\line(4,-1){25.3}}
\multiput(124,69.7)(0.23,0.35){5}{\line(3,-2){18}}
\multiput(113.7,87)(0.3,0.3){3}{\line(1,-1){59}}
\multiput(142,57.7)(0.34,0.29){5}{\line(4,-5){22}}
\multiput(163.8,16.1)(0.4,0){5}{\line(0,1){15}}
\multiput(165.5,15.8)(-0.29,0.29){5}{\line(-1,-1){15}}
\multiput(84.2,75.9)(-0.14,0.38){5}{\line(-3,-1){18}}
\put(79,80.2){$O$}
\put(168,10){$P_{1}$}
\put(176.3,23.3){$l(C,(1,1))$}
\put(98,30){$\square_{C}$}
\put(348,77){\line(-1,0){65.3}}
\put(265,77){\line(-1,0){17.3}}
\put(347.9,77.2){\line(0,-1){47}}
\put(347.9,17){\line(0,-1){16.7}}
\multiput(297,87)(0.3,0.3){3}{\line(1,-1){59}}
\multiput(348.6,15.5)(-0.29,0.29){5}{\line(-1,-1){14.7}}
\multiput(267.2,75.9)(-0.14,0.38){5}{\line(-3,-1){18}}
\multiput(266.3,76.3)(0.23,0.34){5}{\line(4,-3){81.5}}
\linethickness{0.3mm}
\qbezier[6](267,77)(274,77)(281,77)
\qbezier[9](281,77)(294,73.75)(307,70.5)
\qbezier[8](307,70.5)(316.5,64.25)(326,58)
\qbezier[12](326,58)(337,44.25)(348,30.5)
\qbezier[6](348,30.5)(348,23.5)(348,16.5)
\put(307.5,77){\circle*{3}}
\put(348,37){\circle*{3}}
\put(263,80.2){$O$}
\put(351,11){$P_{1}$}
\put(351,38){$P_{2}$}
\put(309,80.5){$P_{3}$}
\put(274,30){$\square_{E_{1}}$}
\end{picture}
\caption{}\label{fig0}
\end{figure}
%
If $z_{1}+w_{1}\geq 0$ 
(resp. $z_{1}+w_{1}<0$), we denote by $\square_{C_{1}}$ the 
convex hull of $\square_{E_{1}}\cup \{P_{2}\}$ (resp. 
$\square_{E_{1}}\cup \{P_{3}\}$). By definition, $C_{0}$ clearly 
satisfies the property (i). 
Let us show the inequality $C_{1}^{2}\leq C^{2}$. Since this is 
obvious if $C_{1}=C$, we consider the case where neither 
$z_{1}$ nor $w_{1}$ is equal to zero. If $z_{1}+w_{1}\geq 0$, we 
can take a non-negative integer $a$ such 
that the lattice point $P_{4}=P_{2}+a(-1,1)$ is contained 
in $l(C,(1,1))\cap \square_{C}$. We denote 
by $\square_{E_{2}}$ the convex hull of $\square_{E_{1}}\cup \{P_{4}\}$. 
Since $E_{2}^{\,2}\leq C^{2}$, it is sufficient to verify 
$C_{1}^{2}\leq E_{2}^{\,2}$. Note that the difference 
between $C_{1}^{2}$ and $E_{2}^{\,2}$ is caused only by the two 
sides $P_{2}P_{1}$ and $P_{4}P_{1}$. Hence we obtain 
$$C_{1}^{2}-E_{2}^{\,2}=(w_{2}-w_{1})z_{1}
-(w_{2}+a-w_{1})(z_{1}-a)+a(-w_{2}-a)=-a(z_{1}+w_{1})\leq 0.$$
Similarly we can show $C_{1}^{2}\leq C^{2}$ in the case 
where $z_{1}+w_{1}<0$. The shape of upper right corner 
of $\square_{C_{1}}$ is one of the three types as in Fig. \ref{fig01}, 
%
\begin{figure}[h]\hspace{-10mm}
\begin{picture}(450,45)
\put(162.3,41.3){\line(-1,0){38}}
\put(162.3,41.3){\line(0,-1){27.7}}
\multiput(162.8,40)(-0.12,0.4){5}{\line(-5,-1){36}}
\put(164.5,38.5){$Q_{1}$}
\put(129,0){$z_{1}=0$}
\put(240.8,41.3){\line(-1,0){38}}
\put(240.7,41.3){\line(0,-1){27.7}}
\multiput(241.1,41.1)(-0.37,0.12){5}{\line(-1,-4){7}}
\put(243.5,38.5){$Q_{1}$}
\put(205,0){$w_{1}=0$}
\put(335.2,41){\line(-1,0){38}}
\put(335.2,41){\line(0,-1){27.7}}
\multiput(305.2,40)(0.12,0.38){5}{\line(3,-1){30}}
\put(337.7,38.5){$Q_{1}$}
\put(285,0){$z_{1}>0,\,w_{1}<0$}
\end{picture}
\caption{}\label{fig01}
\end{figure}
%
where we put $Q_{1}=l(C,(0,1))\cap l(C,(1,0))$. By adapting a similar 
operation to other three corners of $\square_{C_{1}}$, we construct a 
polygon $\square_{C_{2}}$. It is obvious that $C_{2}$ satisfies the 
properties (i) and (ii). Besides, every vertices 
of $\square_{C_{2}}$ is on one of the four lines $l(C_{2},(0,\pm 1))$, 
$l(C_{2},(\pm 1,0))$. We put $Q_{4}=l(C,(-1,0))\cap l(C,(0,1))$. Let 
us show that if $l(C_{2},(0,1))$ contains two distinct vertices 
of $\square_{C_{2}}$ and one 
of them is $Q_{1}$ (resp. $Q_{4}$), then the other one is $Q_{4}$ 
(resp. $Q_{1}$). Assume that $\square_{C_{2}}$ contains $Q_{1}$ but 
not $Q_{4}$. Considering the construction method of $\square_{C_{2}}$, 
we deduce that $Q_{1}$ is contained in $\square_{C_{1}}$ also (see 
Fig. \ref{fig23}). 
%
\begin{figure}[h]\hspace{-9mm}
\begin{picture}(450,58)
\setlength\unitlength{0.8pt}
\put(84,65){\line(1,0){80}}
\put(84,0){\line(0,1){65}}
\put(164,0){\line(0,1){65}}
\multiput(83.5,12.5)(0.37,0){5}{\line(0,1){15}}
\multiput(84.5,26.7)(-0.28,0.28){5}{\line(1,1){15}}
\multiput(99.5,41.6)(-0.2,0.29){5}{\line(2,1){25}}
\multiput(124.5,54)(-0.1,0.34){5}{\line(4,1){40}}
\multiput(83.7,12.5)(0.23,0.28){5}{\line(3,-2){15}}
\put(117,22){$\square_{C_{1}}$}
\put(167,62){$Q_{1}$}
\put(63.5,7){$Q_{0}$}
\put(63.5,62){$Q_{4}$}
\put(192,22){or}
\put(239,65){\line(1,0){80}}
\put(239,0){\line(0,1){65}}
\put(319,0){\line(0,1){65}}
\multiput(238.5,12)(0.37,0){5}{\line(0,1){12}}
\multiput(239.5,23)(-0.28,0.28){5}{\line(3,5){9.5}}
\multiput(249,39)(-0.26,0.28){5}{\line(5,4){17}}
\multiput(266,52.4)(-0.1,0.34){5}{\line(3,1){35}}
\multiput(300.5,64)(0,0.37){5}{\line(1,0){19}}
\multiput(238.7,12)(0.23,0.28){5}{\line(3,-2){14}}
\put(273,22){$\square_{C_{1}}$}
\put(322,62){$Q_{1}$}
\put(218.5,62){$Q_{4}$}
\put(218.5,7){$Q_{0}$}
\put(347,22){or}
\put(394,65){\line(1,0){80}}
\put(394,0){\line(0,1){65}}
\put(474,0){\line(0,1){65}}
\multiput(394,64)(0,0.37){5}{\line(1,0){80}}
\multiput(393.5,65.5)(0.37,0){5}{\line(0,-1){14}}
\multiput(473,65.5)(0.32,-0.1){5}{\line(-1,-3){6}}
\put(426,33){$\square_{C_{1}}$}
\put(477,62){$Q_{1}$}
\put(373.5,62){$Q_{4}$}
\end{picture}
\caption{}\label{fig23}
\end{figure}
%
We denote by $Q_{0}$ the vertex of $\square_{C_{1}}$ on 
$l(C_{1},(-1,0))$ whose $w$-coordinate is minimal. Since the slant of 
the segment $Q_{1}Q_{0}$ is at most one, in the first two cases, 
$\square_{C_{2}}$ has only one vertex $Q_{1}$ on $l(C_{2},(0,1))$. 
In the third case, it is obvious that $\square_{C_{2}}$ has two 
vertices $Q_{1}$ and $Q_{4}$ on $l(C_{2},(0,1))$. 
We next consider the case where $\square_{C_{2}}$ contains $Q_{4}$ but 
not $Q_{1}$. Note that $Q_{1}$ is not contained in $\square_{C_{1}}$. 
Hence we obtain the four possibilities for the upper shape 
of $\square_{C_{1}}$ as in Fig. \ref{figl}. 
%
\begin{figure}[h]\hspace{-5mm}
\begin{picture}(450,34)
\setlength\unitlength{0.8pt}
\put(37,25){\line(1,0){72}}
\put(37,0){\line(0,1){25}}
\put(109,0){\line(0,1){25}}
\multiput(79,24.1)(0.24,0.47){4}{\line(5,-2){30}}
\multiput(80,24)(0,0.37){5}{\line(-1,0){15}}
\multiput(48,18)(-0.2,0.39){5}{\line(3,1){18}}
\put(65,2){$\square_{C_{1}}$}
\put(112,22){$Q_{1}$}
\put(18.5,22){$Q_{4}$}
\put(132,5){or}
\put(168,25){\line(1,0){72}}
\put(168,0){\line(0,1){25}}
\put(240,0){\line(0,1){25}}
\multiput(203,24.1)(0.14,0.38){5}{\line(3,-1){37}}
\multiput(181,12.6)(-0.21,0.42){5}{\line(2,1){23}}
\put(196,2){$\square_{C_{1}}$}
\put(243,22){$Q_{1}$}
\put(149.5,22){$Q_{4}$}
\put(262,5){or}
\put(295,25){\line(1,0){72}}
\put(295,0){\line(0,1){25}}
\put(367,0){\line(0,1){25}}
\multiput(294.5,24)(0,0.37){5}{\line(1,0){19}}
\multiput(313,24)(0.12,0.49){4}{\line(5,-1){54}}
\put(323,2){$\square_{C_{1}}$}
\put(371,22){$Q_{1}$}
\put(277.5,22){$Q_{4}$}
\put(390,5){or}
\put(424,25){\line(1,0){72}}
\put(424,0){\line(0,1){25}}
\put(496,0){\line(0,1){25}}
\multiput(423.7,23.9)(0.12,0.49){4}{\line(6,-1){72.5}}
\put(452,2){$\square_{C_{1}}$}
\put(499,22){$Q_{1}$}
\put(405.5,22){$Q_{4}$}
\end{picture}
\caption{}\label{figl}
\end{figure}
%
By the assumption $Q_{4}\in \square_{C_{2}}$, the first two cases can be 
excluded. The third case does not occur. Indeed, since $Q_{4}$ must be 
contained in $\square_{C}$ in this case, we have $z_{1}+w_{1}\geq 0$. 
It follows that $\square_{C_{1}}$ does not have vertices on 
$l(C_{1},(0,1))$ except for $Q_{4}$. Thus only the last case remains, 
in which $\square_{C_{2}}$ has one vertex $Q_{4}$ on $l(C_{2},(0,1))$. 
Similarly, with respect to the points $Q_{2}=l(C,(1,0))\cap 
l(C,(0,-1))$ and $Q_{3}=l(C,(0,-1))\cap l(C,(-1,0))$, we can show that 
if $l(C_{2},(0,-1))$ contains two distinct vertices of $\square_{C_{2}}$ 
and one of them is $Q_{2}$ (resp. $Q_{3}$), then the other one 
is $Q_{3}$ (resp. $Q_{2}$). \\
(a) Consider the case where $Q_{1}$ and $Q_{4}$ are contained in 
$\square_{C_{2}}$. In this case, $\square_{C_{2}}$ has at most 
six vertices (see Fig. \ref{fig03}). 
\begin{figure}[h]\hspace{-10mm}
\begin{picture}(450,45)
\multiput(197.9,41)(0,0.35){5}{\line(1,0){55.9}}
\multiput(198.1,41)(0.35,0){5}{\line(0,-1){20}}
\multiput(252.2,41)(0.35,0){5}{\line(0,-1){12.8}}
\multiput(198.2,21)(0.2,0.4){5}{\line(1,-2){10.5}}
\multiput(220,-0.1)(0,0.33){7}{\line(6,5){33.5}}
\multiput(208.5,-0.1)(0,0.39){5}{\line(1,0){11.6}}
\put(215,22){$\square_{C_{2}}$}
\put(256,39.5){$Q_{1}$}
\put(182,39.5){$Q_{4}$}
%
\end{picture}
\caption{}\label{fig03}
\end{figure}
%
The right and left vertical sides and the lower horizontal 
one may not exist. We denote by $Q$ the vertex of $\square_{C_{2}}$ on 
$l(C_{2},(0,-1))$ whose $z$-coordinate is minimal. Then 
we can finish the proof by defining $\square_{C_{0}}$ as a 
triangle $Q_{1}Q_{4}Q$. Indeed, a simple computation shows 
that $C_{0}$ satisfies the property (i). \\
(b) An argument similar to that in (a) goes through for the case where 
$Q_{2}$ and $Q_{3}$ are contained in $\square_{C_{2}}$. \\
(c) We put $L_{1}=l(C_{2},(0,1))\cap \square_{C_{2}}$ and 
$L_{2}=l(C_{2},(0,-1))\cap \square_{C_{2}}$, and 
consider the case where $L_{1}$ and $L_{2}$ are not the segments 
$Q_{1}Q_{4}$ and $Q_{2}Q_{3}$, respectively. In this case, the 
polygon $\square_{C_{2}}$ is as in Fig. \ref{fig04} (I). 
%
\begin{figure}[h]\hspace{-7mm}
\begin{picture}(450,128)
\put(41,30){\line(1,0){90}}
\put(41,110){\line(1,0){90}}
\put(41,30){\line(0,1){80}}
\put(131,30){\line(0,1){80}}
\multiput(84,109.2)(0,0.38){5}{\line(1,0){21}}
\multiput(105,110.7)(-0.2,-0.3){6}{\line(3,-2){27}}
\multiput(130,93)(0.38,0){5}{\line(0,-1){26}}
\multiput(131.5,66.5)(-0.23,0.25){6}{\line(-5,-4){46}}
\multiput(61,29.5)(0,0.38){5}{\line(1,0){24}}
\multiput(40.4,71)(0.34,0.2){5}{\line(1,-2){21}}
\multiput(40.2,71)(0.38,0){5}{\line(0,1){13}}
\multiput(40.7,83)(-0.15,0.39){5}{\line(5,3){44}}
\put(97.5,113.5){$O$}
\put(134,108){$Q_{1}$}
\put(134,31){$Q_{2}$}
\put(134,89){$R_{1}=(u_{1},v_{1})$}
\put(134,63){$R_{2}=(u_{1},v_{2})$}
\put(80,17.2){$R_{3}=(u_{3},-q)$}
\put(76,67){$\square_{C_{2}}$}
\linethickness{0.25mm}
\qbezier[30](41,30)(21,70)(41,110)
\qbezier[35](41,110)(88,142)(131,110)
\color{white}
\put(31,72){\circle*{10}}
\put(86,126){\circle*{8}}
\color{black}
\put(27,70){$q$}
\put(83,124){$q'$}
\put(79,0){(I)}
%
\linethickness{0.4pt}
\put(254,110){\line(-1,0){35}}
\multiput(254.7,109)(-0.15,0.48){4}{\line(-4,-1){35}}
\multiput(253.1,110.4)(0.5,0){4}{\line(0,-1){44}}
\put(211,16){$L_{1}=\{Q_{1}\}$}
\put(225,70){$\square_{C_{2}}$}
\put(326,30){\line(-1,0){35}}
\multiput(321,29.8)(0.1,0.44){4}{\line(-5,1){35}}
\multiput(320.1,29.6)(0.5,0){4}{\line(0,1){40}}
\put(280,16){$L_{2}=\{Q_{2}\}$}
\put(291,54){$\square_{C_{2}}$}
\put(293,0){(I\hspace{-0.3mm}I)}
\put(393,30){\line(-1,0){15}}
\put(393,110){\line(-1,0){26}}
\put(393,30){\line(0,1){80}}
\multiput(392.2,70)(0.4,0.22){5}{\line(-2,3){26.6}}
\multiput(393.5,70.3)(-0.4,0.18){5}{\line(-1,-3){13.6}}
\multiput(367,108.9)(0,0.5){4}{\line(-1,0){15}}
\multiput(379.3,29.6)(0,0.5){4}{\line(-1,0){20}}
\put(352,16){$R_{1}=R_{2}$}
\put(360,70){$\square_{C_{2}}$}
\end{picture}
\caption{}\label{fig04}
\end{figure}
%
We construct a polygon $\square_{C_{3}}$ by the following procedure. 
If one of the equalities $L_{1}=\{Q_{1}\}$, $L_{2}=\{Q_{2}\}$ and 
$R_{1}=R_{2}$ holds, we define $\square_{C_{3}}=\square_{C_{2}}$ 
(see Fig. \ref{fig04} (I\hspace{-0.3mm}I)). Assume 
that $L_{1}\neq \{Q_{1}\}$, $L_{2}\neq \{Q_{2}\}$ and $R_{1}\neq R_{2}$. 
If $u_{1}+v_{1}\leq 0$ (resp. $u_{1}+v_{1}>0$ and $u_{1}-u_{3}\leq 
v_{2}+q$), we construct $\square_{C_{3}}$ from $\square_{C_{2}}$ by 
connecting $O$ and $R_{2}$ (resp. $R_{1}$ and $R_{3}$) as in 
Fig. \ref{fig26}. 
%
\begin{figure}[h]\hspace{-10mm}
\begin{picture}(450,95)
\setlength\unitlength{0.95pt}
\put(204,12){\line(-1,0){70}}
\put(204,92){\line(-1,0){70}}
\put(204,11.8){\line(0,1){26}}
\put(204,61){\line(0,1){31.3}}
\multiput(165,91.2)(0,0.38){5}{\line(1,0){21}}
\multiput(186.3,92.8)(-0.32,-0.2){5}{\line(1,-3){18.2}}
\multiput(204.7,38.7)(-0.23,0.25){6}{\line(-5,-4){34.2}}
\multiput(170.7,11.5)(0,0.38){5}{\line(-1,0){25}}
\multiput(145.5,11.7)(0.2,0.49){4}{\line(-2,1){11}}
\multiput(165.7,91.2)(-0.15,0.39){5}{\line(-5,-3){29.6}}
\linethickness{0.3mm}
\qbezier[13](186,92)(195,76.5)(204,61)
\qbezier[8](204,39)(204,50)(204,61)
\put(180.5,95.5){$O$}
\put(207.5,57.5){$R_{1}$}
\put(207.5,34.5){$R_{2}$}
\put(164,0){$R_{3}$}
\put(152,47){$\square_{C_{3}}$}
\linethickness{0.4pt}
\put(329,12){\line(-1,0){70}}
\put(329,92){\line(-1,0){70}}
\put(329,11.8){\line(0,1){33}}
\put(329,62){\line(0,1){30}}
\multiput(272,91.2)(0,0.38){5}{\line(1,0){20}}
\multiput(292,92.8)(-0.25,-0.33){5}{\line(5,-4){37.5}}
\multiput(329.6,62.1)(-0.23,0.25){6}{\line(-1,-2){25.3}}
\multiput(303.8,11.5)(0,0.38){5}{\line(-1,0){25.5}}
\multiput(278,11.7)(0.2,0.49){4}{\line(-2,1){17}}
\multiput(273,91.2)(-0.15,0.39){5}{\line(-5,-3){11.6}}
\linethickness{0.3mm}
\qbezier[15](303.8,11.7)(316.4,28.1)(329,44.5)
\qbezier[6](329,44.5)(329,52.3)(329,61.1)
\put(288,95.5){$O$}
\put(332.5,59.3){$R_{1}$}
\put(332.5,41){$R_{2}$}
\put(295,0){$R_{3}$}
\put(277,47){$\square_{C_{3}}$}
\end{picture}
\caption{}\label{fig26}
\end{figure}
%
On the other hand, in the case where $u_{1}+v_{1}>0$ and 
$u_{1}-u_{3}>v_{2}+q$, 
we put $l(-1,-1)\cap l(-1,0)=(u_{1}-q',v_{4})$. 
If $v_{4}\leq v_{2}$ (resp. $v_{4}>v_{2}$), 
we define $R_{0}=R_{2}$ (resp. $R_{0}=(u_{1},v_{4})$), and 
construct $\square_{C_{3}}$ from $\square_{C_{2}}$ by 
connecting $O$, $R_{0}$ and $R_{3}$. In each case, one can easily 
verify that $C_{3}$ satisfies the properties (i) and (ii). By applying 
a similar operation to the opposite side of $\square_{C_{3}}$, we can 
construct a lattice polygon $\square_{C_{4}}$ satisfying the 
properties (i) and (ii). There exist four types of the shape 
of $\square_{C_{4}}$ as in Fig. \ref{fig25}, 
%
\begin{figure}[h]\hspace{-10mm}
\begin{picture}(450,42)
\multiput(94.4,40)(0,0.35){5}{\line(1,0){21.5}}
\multiput(73.3,22.1)(-0.3,0.3){4}{\line(6,5){22}}
\multiput(115,40.5)(0.35,0.23){4}{\line(4,-5){13}}
\multiput(120.8,0.1)(-0.36,0.2){4}{\line(1,3){8.2}}
\multiput(106.5,0)(0,0.35){5}{\line(1,0){14.3}}
\multiput(72.6,22.7)(0.22,0.33){4}{\line(3,-2){34}}
\put(96,20){$\square_{C_{4}}$}
\multiput(175.7,40)(0,0.35){5}{\line(1,0){20.4}}
\multiput(153.3,22.2)(-0.31,0.31){4}{\line(5,4){23.1}}
\multiput(195,40.5)(0.4,0.2){4}{\line(1,-2){12.8}}
\multiput(207.7,15.2)(0.35,0){5}{\line(0,-1){15.3}}
\multiput(208.5,0)(0.1,0.29){5}{\line(-5,2){56}}
\put(177,20){$\square_{C_{4}}$}
\multiput(292.5,22.2)(0.35,0){5}{\line(0,1){19.3}}
\multiput(293.9,40.2)(-0.1,0.45){4}{\line(-5,-1){56}}
\multiput(237.5,0)(0.35,0){5}{\line(0,1){30.4}}
\multiput(238,0)(-0.1,0.29){5}{\line(5,2){56}}
\put(254,20){$\square_{C_{4}}$}
\multiput(320.5,26.7)(-0.1,0.32){5}{\line(4,1){54}}
\multiput(373.5,0)(0.35,0){5}{\line(0,1){41.6}}
\multiput(320,27)(0.2,0.4){4}{\line(2,-1){53.8}}
\put(349,20){$\square_{C_{4}}$}
\end{picture}
\caption{}\label{fig25}
\end{figure}
%
where we ignore the reflection about $z$-axis or $w$-axis 
or both. We remark that, in the first two cases, 
vertical or horizontal sides may not exist. 
If $\square_{C_{4}}$ is a triangle or a square, we can finish 
the proof by putting $C_{0}=C_{4}$. If $\square_{C_{4}}$ has more than 
four vertices, we construct a polygon $\square_{C_{5}}$ by the 
following procedure (see Fig. \ref{fig24}). 
%
\begin{figure}[h]\hspace{-10mm}
\begin{picture}(450,55)
\put(183,45){\line(1,0){80}}
\put(183,0){\line(0,1){45}}
\put(263,0){\line(0,1){45}}
\multiput(222.2,44.2)(0,0.38){5}{\line(1,0){16.8}}
\multiput(239,45.7)(-0.2,-0.3){6}{\line(6,-5){24.8}}
\multiput(263.8,24.5)(-0.29,0.29){5}{\line(-5,-6){20}}
\multiput(182.4,13)(0.34,0.2){5}{\line(1,-2){6}}
\multiput(182.7,12)(-0.15,0.39){5}{\line(5,4){40}}
\put(215,15){$\square_{C_{4}}$}
\put(218,49.5){$O$}
\put(235.5,50){$S_{1}$}
\put(266,21){$S_{2}=(s_{2},t_{2})$}
\put(105,9.5){$S_{3}=(s_{2}-q',t_{3})$}
\end{picture}
\caption{}\label{fig24}
\end{figure}
%
If $O=S_{1}$, we define $\square_{C_{5}}=\square_{C_{4}}$. Assume 
that $O\neq S_{1}$. If $t_{2}\geq t_{3}$ (resp. $t_{2}<t_{3}$), 
we construct $\square_{C_{5}}$ from $\square_{C_{4}}$ by 
connecting $O$ and $S_{2}$ (resp. $S_{1}$ and $S_{3}$). Then an easy 
computation shows that $\square_{C_{5}}$ satisfies the properties (i) 
and (ii). By applying a similar operation to the lower side 
of $\square_{C_{5}}$, we obtain the desired 
lattice polygon $\square_{C_{0}}$. 
\end{proof}
\begin{remark}\label{animaruzonbi}
As it is apparent from the construction method, the 
equality $C^{2}=C_{0}^{2}$ holds if and only 
if $\square_{C}=\square_{C_{0}}$. 
\end{remark}
Using Lemma \ref{ikkakuusagi}, we can find the lower bound of the 
self-intersection number of $C$. 
\begin{proposition}\label{34}
Let $C$ be a nef curve on $S$, and $C_{0}$ a curve as in 
Lemma \ref{ikkakuusagi}. Then the inequality 
$C_{0}^{2}\geq \frac{3}{4}q^{2}$ $\big($in particular, 
$C^{2}\geq \frac{3}{4}q^{2}\big)$ holds. 
\end{proposition}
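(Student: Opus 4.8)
The plan is to work entirely with the normalized polygon $\square_{C_0}$ furnished by Lemma \ref{ikkakuusagi}. Recall that by Remark \ref{kyatapira} and the discussion preceding Lemma \ref{ikkakuusagi}, the self-intersection number $C_0^2$ can be written combinatorially in terms of the lattice polygon. More precisely, if we let the polygon $\square_{C_0}$ be cut out by the inequalities $x_i z + y_i w \le n_i$ (with the four distinguished lines $l(C_0,(0,\pm 1))$, $l(C_0,(\pm 1,0))$ among them), then $C_0^2 = \sum_i n_i\, C_0.D_i$, and each intersection number $C_0.D_i$ is the lattice length of the corresponding edge. Since Lemma \ref{ikkakuusagi}(iii)--(v) guarantees $\square_{C_0}$ has at most four vertices, each lying on one of the four coordinate-type lines, the polygon is (a degeneration of) a quadrilateral inscribed between the vertical strip of width $q'=d(C_0,(1,0))$ and the horizontal strip of width $q=d(C_0,(0,1))$. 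So $C_0^2$ becomes an explicit function of the coordinates of the (at most four) vertices, subject to the width constraints $q$, $q'$ and the slope constraints coming from $d(C_0,(1,\pm 1))\ge q'$.

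The key steps, in order: first, set up coordinates so that $\square_{C_0}$ sits in the strip $0 \le w \le q$ (this is the $(0,\pm 1)$ width) and has horizontal extent controlled by $q'$; enumerate the finitely many combinatorial types from Lemma \ref{ikkakuusagi} (triangle with a vertex on each of three coordinate lines, or quadrilateral with one vertex on each of the four). Second, for each type write $C_0^2$ as a quadratic polynomial in the free vertex-coordinates. Using the shoelace-type formula, a triangle with base of lattice length $b$ on a horizontal line $w=0$ or $w=q$ and apex at height contributing the full $q$ gives roughly $C_0^2 = $ (something like) $2\cdot\mathrm{Area} + (\text{boundary terms})$; the point is that $C_0^2$ will be bounded below by a multiple of $q^2$ once we use $q' \ge q$ — which holds because $q = d(C_0,(0,1))$ is the minimum of $d(C_0,(x_i,y_i))$ over primitive elements and $q' = d(C_0,(1,0))$ is one of these values (this is exactly the normalization assumption recorded before Lemma \ref{ikkakuusagi}; note $(1,0)\in\mathrm{Pr}^\ast(S)$ in the relevant case, and in the $\sharp\mathrm{Pr}^\ast(S)=2$ case one handles $q'$ separately via the previous lemma). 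Third, minimize each quadratic over the allowed region; the constraint $d(C_0,(1,\pm 1)) \ge q'$ forces the "diagonal" width to be at least $q'$, which prevents the polygon from being too thin in the diagonal directions and is what pushes the minimum up to $\tfrac34 q^2$. The extremal configuration should be the triangle with vertices $(0,0)$, $(q,0)$, $(0,q)$ sheared appropriately — i.e. the lattice triangle realizing $g=4$, $q=4$ in miniature — which is precisely why $\tfrac34$ (and not $1$) is the right constant: $C_0^2 = \tfrac34 q^2$ is attained, e.g., by $q=2$ giving $C_0^2=3$, the plane-cubic/elliptic curve case borderline.

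The main obstacle I expect is the case analysis itself: even though Lemma \ref{ikkakuusagi} cuts the types down to a short list, within each type one must carefully track which edges are present (the lemma warns that some vertical/horizontal sides "may not exist"), translate each $d(C_0,(\pm1,0))$, $d(C_0,(0,\pm1))$, $d(C_0,(1,\pm1))$ constraint into inequalities on vertex coordinates, and then verify the quadratic lower bound $C_0^2 \ge \tfrac34 q^2$ on the resulting polytope of configurations. Each individual minimization is a routine exercise (a quadratic in one or two variables on a polygonal domain, minimized at a vertex or along an edge of the domain), but there are enough sub-cases that bookkeeping errors are the real danger. A clean way to organize it is to first reduce, using the monotonicity arguments already embodied in the proof of Lemma \ref{ikkakuusagi} (moving vertices in lattice directions only decreases $C_0^2$), to the case where $\square_{C_0}$ is the triangle $Q_1 Q_4 Q$ or the analogous minimal triangle, and then the inequality $C_0^2 \ge \tfrac34 q^2$ reduces to a single two-variable estimate where $q' \ge q$ and the diagonal-width bound do the work. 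I would present the triangle case in full and remark that the quadrilateral cases follow by the same computation after an extra elementary reduction.
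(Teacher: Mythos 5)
Your overall strategy is the same as the paper's: normalize via Lemma \ref{ikkakuusagi}, express $C_{0}^{2}$ in terms of the at most four vertices lying on $l(C_{0},(0,\pm 1))$, $l(C_{0},(\pm 1,0))$, and minimize subject to $q'=d(C_{0},(1,0))\geq q$ and $d(C_{0},(1,\pm 1))\geq q'$. But your write-up stops exactly where the content of the proposition lies. The paper's proof parametrizes the case-(c) quadrilateral by offsets $a,b,c,e$, proves the identity $C_{0}^{2}=qq'+(a+c-q')(b+e-q)$, makes a symmetrizing WLOG ($b+e\geq q$, $(q'-a)b\geq (q'-c)e$), splits into three sub-cases according to whether $a+e\geq q'$ and whether $b+c\leq q'$, uses in each sub-case which vertex the diagonal line $l(C_{0},(\pm 1,\mp 1))$ passes through, and finally completes a square to get $C_{0}^{2}\geq qq'-\frac{q^{2}}{4}\geq \frac{3}{4}q^{2}$. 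None of this appears in your proposal; declaring the minimization ``a routine exercise'' is not a proof, and the sub-case bookkeeping (which vertex the diagonal constraint attaches to) is precisely where the constant $\frac{3}{4}$ is produced.

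Moreover, the two concrete shortcuts you do commit to are wrong. First, the suggested reduction ``moving vertices in lattice directions only decreases $C_{0}^{2}$, so reduce to the triangle $Q_{1}Q_{4}Q$'' cannot work: that triangle has $C_{0}^{2}=qq'\geq q^{2}$, so any such monotone reduction would prove the false bound $C_{0}^{2}\geq q^{2}$. The value $\frac{3}{4}q^{2}$ is actually attained, e.g.\ by the triangle with $(a,b,c,e)=(0,6,3,3)$, $q=q'=6$, $C_{0}^{2}=27<36$ (the $(g,q)=(10,6)$ polygon of Fig.\ \ref{fig29}), so in case (c) the minimizers are genuinely not of $Q_{1}Q_{4}Q$ type and the deformations of Lemma \ref{ikkakuusagi} cannot be pushed that far while preserving property (i). Second, the extremal configuration is not ``the triangle $(0,0),(q,0),(0,q)$ sheared appropriately'': a unimodular shear preserves the self-intersection number, which for that triangle is $q^{2}$. (Your alternative description, the genus-$4$, $q=4$ triangle and its $q=2$ miniature with $C_{0}^{2}=3$, is the correct one, but it contradicts the shear description, which indicates the constant was never actually verified.) A smaller point: in the case $\sharp\,{\rm Pr}^{\ast}(S)=2$ the preceding lemma gives $d(C,(x,y))\geq q'$ for $x\geq 1$, not $q'\geq q$, so the inequality $q'\geq q$ there needs its own (easy) justification rather than the citation you give. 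As it stands, the proposal is an outline of the paper's computation with its decisive steps missing and its proposed simplification unusable.
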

\begin{proof}
Recall that when we construct $C_{0}$, we divided 
the situation into the three cases (a), (b) and (c) in the proof of 
Lemma \ref{ikkakuusagi}. In cases (a) and (b), an easy 
computation shows that $C_{0}^2=qq'\geq q^{2}$. Let us consider 
case (c), that is, we assume that both $l(C_{0},(0,1))$ and 
$l(C_{0}(0,-1))$ contain only one vertex. We keep the notation 
$Q_{1},\ldots ,Q_{4}$ in the proof of Lemma \ref{ikkakuusagi}. 
Then the polygon $\square_{C_{0}}$ is drawn as in Fig. \ref{fig05}, 
%
\begin{figure}[h]\hspace{-10mm}
\begin{picture}(450,99)
\setlength\unitlength{0.92pt}
\put(187,13){\line(1,0){108}}
\put(187,97){\line(1,0){108}}
\put(187,13){\line(0,1){84}}
\put(295,13){\line(0,1){84}}
\multiput(187.5,48.3)(-0.3,0.3){5}{\line(5,4){60.4}}
\multiput(246,96.2)(0.16,0.3){6}{\line(4,-3){49}}
\multiput(259.5,12.2)(-0.28,0.28){6}{\line(3,4){36}}
\multiput(186.5,48.8)(0.2,0.3){6}{\line(2,-1){73}}
\linethickness{0.25mm}
\qbezier[19](246,97)(270.5,114)(295,97)
\qbezier[20](295,61)(310,37)(295,13)
\qbezier[27](187,13)(223.5,-5)(260,13)
\qbezier[20](187,97)(172,73)(187,49)
\color{white}
\put(251,100){\circle*{3}}
\put(272,106){\circle*{8}}
\put(225,4){\circle*{7}}
\color{black}
\put(241,101.5){$P_{1}$}
\put(298.5,58){$P_{2}$}
\put(254,0){$P_{3}$}
\put(171,46){$P_{4}$}
\put(171,99){$Q_{4}$}
\put(298.5,8){$Q_{2}$}
\put(269,103.5){$a$}
\put(305.5,32){$b$}
\put(222,0){$c$}
\put(171.5,70){$e$}
\put(236,50){$\square_{C_{0}}$}
\end{picture}
\caption{}\label{fig05}
\end{figure}
%
where we define 
\begin{itemize}
\setlength{\itemsep}{-2pt}
\item[$P_{1}$\,:\!]the vertex of 
$\square_{C_{0}}$ on $l(C_{0},(0,1))$, 
\item[$P_{2}$\,:\!]$\left\{
\begin{array}{lr}
\!\mbox{the vertex on $l(C_{0},(1,0))\setminus \{Q_{1}\}$}
 &\hspace{-36mm} \mbox{($\square_{C_{0}}$ has two vertices 
on $l(C_{0},(1,0))\setminus \{Q_{2}\}$),}\\
\!\mbox{the vertex on $l(C_{0},(1,0))$ whose $w$-coordinate is 
maximal}
 & ({\rm otherwise}),
\end{array}\right.$
\item[$P_{3}$\,:\!]the vertex of 
$\square_{C_{0}}$ on $l(C_{0},(0,-1))$, 
\item[$P_{4}$\,:\!]$\left\{
\begin{array}{lr}
\!\mbox{the vertex on $l(C_{0},(-1,0))\setminus \{Q_{3}\}$}\\
 &\hspace{-41.2mm} \mbox{($\square_{C_{0}}$ has two vertices 
on $l(C_{0},(-1,0))\setminus \{Q_{4}\}$),}\\
\!\mbox{the vertex on $l(C_{0},(-1,0))$ whose $w$-coordinate is minimal}
 &\! ({\rm otherwise}).
\end{array}\right.$
\end{itemize}
Note that $(b,e)\neq (q,0),(0,q)$. By computing, 
we obtain the formula 
$$C_{0}^{2}=qq'+(a+c-q')(b+e-q).$$
Without loss of 
generality, we can assume that $b+e\geq q$ and $(q'-a)b\geq (q'-c)e$. \\
(i) In the case where $a+e\geq q'$, we have $b+c\geq q'$. Indeed, 
if not, the inequality $eb\geq (q'-a)b\geq (q'-c)e$ gives that 
$e=0$ and $b=q$, a contradiction. 
Since the line $l(C_{0},(1,-1))$ (resp. $l(C_{0},(-1,1))$) passes 
through the point $P_{3}$ (resp. $P_{1}$), the 
condition $d(C_{0},(1,-1))\geq q'$ implies that $a+q-(q'-c)\geq q'$. 
Hence we have $C_{0}^{2}\geq qq'+(q'-q)(b+e-q)\geq qq'$. \\
(ii) Assume that $a+e<q'$ and $b+c\leq q'$. 
Since the line $l(C_{0},(1,-1))$ (resp. $l(C_{0},(-1,1))$) passes 
through the point $P_{2}$ (resp. $P_{4}$), we have 
$q'-e+q-b\geq q'$. It follows that $b+e=q$ and $C_{0}^{2}=qq'$. \\
(iii) Assume that $a+e<q'$ and $b+c>q'$. Since 
the line $l(C_{0},(1,-1))$ (resp. $l(C_{0},(-1,1))$) passes through 
the point $P_{3}$ (resp. $P_{4}$), we have $q'-e+q-(q'-c)\geq q'$. 
Hence we have 
\begin{eqnarray}\label{furogga}
C_{0}^{2}\geq qq'+(a+e-q)(b+e-q)
=qq'+\left(e+\frac{a+b-2q}{2}\right)^2-
\left(\frac{a-b}{2}\right)^{2},
\end{eqnarray}
where the equality holds if and only if $c-e=q'-q$ or $b+e=q$. 
If $a\geq b$ and $b+e=q$, we have $C^{2}=qq'$. On the other 
hand, if $a\geq b$ and $b+e>q$, we have $a+e>q$ and 
$C_{0}^{2}>qq'$ by the first inequality of 
(\ref{furogga}). Lastly, if $a<b$, we have $0<b-a\leq q$ and 
\begin{eqnarray}\label{ariahan}
C_{0}^{2}\geq qq'-\frac{q^{2}}{4}\geq \frac{3}{4}q^{2}.
\end{eqnarray}
%
\end{proof}
Proposition \ref{34} yields the following interesting corollary, though, 
which has no direct relation to the subject of this paper. 
\begin{corollary}\label{zoma}
Let $S$ be a compact smooth toric surface. For a $k$-gonal nef curve $C$ 
on $S$, 
the inequality $C^{2}\geq \frac{3}{4}k^{2}$ holds. 
\end{corollary}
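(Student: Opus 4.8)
The plan is to deduce Corollary \ref{zoma} directly from Proposition \ref{34}. That proposition already gives $C^{2}\geq \frac{3}{4}q^{2}$ with $q$ the lattice width of $\square_{C}$, so all that remains is the elementary inequality $k\leq q$ between the gonality and the lattice width; granting it, $q\geq k\geq 1$ yields $C^{2}\geq \frac{3}{4}q^{2}\geq \frac{3}{4}k^{2}$ at once.

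To prove $k\leq q$ one shows that $C$ carries a $g^{1}_{q}$, namely the restriction to $C$ of a suitable toric fibration. Fix $(x_{i},y_{i})\in {\rm Pr}^{\ast}(S)$ and let $\varphi$ be the toric fibration whose fiber is $F=\sum_{(x_{j},y_{j})\in M(x_{i},y_{i})}|\beta_{j}|D_{j}$ (Fact \ref{jinmencho}). Writing $(x_{j},y_{j})=\alpha_{j}(x_{i},y_{i})+\beta_{j}(x_{i+1},y_{i+1})$ and pairing with $(x_{i},y_{i})$ via the determinant, the smoothness relation $x_{i+1}y_{i}-y_{i+1}x_{i}=1$ forces $\beta_{j}=x_{j}y_{i}-y_{j}x_{i}$, which is positive exactly when $(x_{j},y_{j})\in M(x_{i},y_{i})$; hence by Remark \ref{kyatapira},
$$C.F=\sum_{(x_{j},y_{j})\in M(x_{i},y_{i})}(x_{j}y_{i}-y_{j}x_{i})\,C.D_{j}=d(C,(x_{i},y_{i})).$$
Since $C$ is a smooth curve of genus $\geq 2$ it cannot lie in a fiber of the ${\mathbb P}^{1}$-fibration $\varphi$ (every such fiber has arithmetic genus $0$), so $\varphi|_{C}\colon C\to {\mathbb P}^{1}$ is surjective of degree $C.F=d(C,(x_{i},y_{i}))$ and $C$ has a $g^{1}_{d(C,(x_{i},y_{i}))}$. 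Minimizing over $(x_{i},y_{i})\in {\rm Pr}^{\ast}(S)$ and invoking the identity $q=\min\{d(C,(x_{i},y_{i}))\mid (x_{i},y_{i})\in {\rm Pr}^{\ast}(S)\}$ established after Remark \ref{kyatapira}, we find that $C$ has a $g^{1}_{q}$, whence $k={\rm gon}(C)\leq q$.

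I do not expect any genuine obstacle: the corollary is just Proposition \ref{34} combined with the remark that every toric fibration restricts on $C$ to a pencil of degree $d(C,(x_{i},y_{i}))$. The only point calling for a little care is the surjectivity of $\varphi|_{C}$, i.e.\ that $C$ is not swallowed by a fiber; this uses that $C$ is a smooth curve of genus $\geq 2$ (the standing hypothesis, cf.\ Theorem \ref{mthm}), a fiber of a ${\mathbb P}^{1}$-fibration of a smooth surface being a tree of rational curves. All the rest is bookkeeping with the lattice-polygon machinery of Sections \ref{ganirasu} and \ref{lbd}.
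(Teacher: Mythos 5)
Your proposal is correct and follows the same route as the paper, which presents the corollary as an immediate consequence of Proposition \ref{34} together with the (implicit) inequality $k\leq q$ coming from the fact that toric fibrations restrict to pencils of degree $d(C,(x_{i},y_{i}))$ on $C$ (Fact \ref{jinmencho} and Remark \ref{kyatapira}). Your explicit verification that $\beta_{j}=x_{j}y_{i}-y_{j}x_{i}$ and that $C.F=d(C,(x_{i},y_{i}))$ simply spells out what the paper leaves to the reader.
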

Considering an irredundant embedding of $C$, we can obtain a more precise 
lower bound for $C^{2}$ when $q$ takes a small value. 
Here, the `irredundancy' has the following meaning: 
If there exists a $T$-invariant divisor $D_{i}$ on $S$ such that 
$D_{i}^{2}=-1$ and $C.D_{i}\leq 1$, then by blowing it down, we can 
embed $C$ in another compact smooth toric surface. By carrying out 
such operation repeatedly, we obtain an embedding satisfying 
the following condition. 
\begin{definition}\label{rebe}
Let $C$ be a smooth curve on $S$. 
The pair $(S,C)$ (or simply the curve $C$) is said to 
be {\it relatively minimal} if $C.D_{i}\geq 2$ for any $T$-invariant 
divisor $D_{i}$ on $S$ with self-intersection number $-1$. 
\end{definition}
In the remaining part of this section, 
we set $O=l(C,(0,1))\cap l(C,(1,0))$. 
Note that it is equal to the point $l(C_{0},(0,1))\cap l(C_{0},(1,0))$. 
\begin{proposition}\label{k3}
Let $C$ be a curve as in Theorem \ref{mthm}, and assume that $(S,C)$ is 
relatively minimal. If $q=2$ $($resp. $3)$, 
then $C^{2}\geq 12$ 
$($resp. $18)$. 
\end{proposition}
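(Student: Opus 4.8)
The plan is to pass to the lattice polygon $\square_C$ and argue combinatorially. Since $C$ is nef and $\Delta_S$ is smooth, $\square_C$ is a genuine lattice polygon: each of its vertices is the intersection of two consecutive supporting lines $x_iz+y_iw=n_i$, hence a lattice point. One then has the standard identities $C^2=2\,\mathrm{Area}(\square_C)$ and, by Pick's theorem combined with the genus formula, $C^2=2g-2+\sum_iC.D_i$, where $g$ is the number of lattice points in the interior of $\square_C$ and $\sum_iC.D_i$ is the number of lattice points on its boundary; moreover, by Fact~\ref{jinmencho} and Remark~\ref{kyatapira} the lattice width of $\square_C$ equals $q$. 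I keep the normalisation fixed in this section, so that $\square_C$ lies in the strip $0\le w\le q$ bounded by $l(C,(0,1))$ and $l(C,(0,-1))$, the lines $l(C,(\pm1,0))$ are its extreme vertical supporting lines, and $q'=d(C,(1,0))\ge q$. Relative minimality of $(S,C)$ says that no $T$-invariant $(-1)$-curve meets $C$ in at most one point.

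The first step is to read off what relative minimality means for $\square_C$. Suppose $e_i$ is an edge of $\square_C$ of lattice length $C.D_i=1$, and let the two edges adjacent to $e_i$ have primitive normals $u^{-}$ and $u^{+}$. At an endpoint of $e_i$ the cone spanned by the relevant $u^{\pm}$ and the primitive generator $(x_i,y_i)$ of $\sigma(D_i)$ is either already smooth — so that $u^{\pm}$ is the neighbour of $(x_i,y_i)$ in $\Delta_S$ on that side — or it has to be subdivided, in which case the neighbour of $(x_i,y_i)$ is ``closer'' to $(x_i,y_i)$. When the first situation occurs at both ends one computes $D_i^2=-1$ exactly when $u^{-}+u^{+}=(x_i,y_i)$, and then relative minimality is violated. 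Hence in a relatively minimal pair every lattice-length-$1$ edge of $\square_C$ must be flanked, at one of its ends, either by a cone that genuinely subdivides or by an edge blunt enough that the coincidence $u^{-}+u^{+}=(x_i,y_i)$ fails; geometrically this forbids $\square_C$ from being ``too thin''.

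For $q=2$ all interior lattice points of $\square_C$ lie on the line $w=1$. Writing $\square_C\cap\{w=j\}=[\alpha_j,\beta_j]$ for $j=0,1,2$, with $\alpha_0,\beta_0,\alpha_2,\beta_2\in\mathbb Z$, we get $C^2=(\beta_0-\alpha_0)+2(\beta_1-\alpha_1)+(\beta_2-\alpha_2)$, while the open interval $(\alpha_1,\beta_1)$ contains at least $g\ge2$ integers. If $C^2\le10$ then $\beta_1-\alpha_1$ is forced to be small and $(\beta_0-\alpha_0)+(\beta_2-\alpha_2)$ small as well, so that only a short list of polygons $\square_C$ survives; in each of them one exhibits a lattice-length-$1$ edge whose two flanking edges are sharp in the sense above, so that the corresponding $T$-invariant divisor is a $(-1)$-curve meeting $C$ in one point, contradicting relative minimality. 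Therefore $C^2\ge12$.

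The case $q=3$ is the same in spirit, with $\square_C$ in the strip $0\le w\le3$ and its interior lattice points spread over $w=1$ and $w=2$, so that $C^2=\ell_0+2\ell_1+2\ell_2+\ell_3$ where $\ell_j$ is the length of $\square_C\cap\{w=j\}$. One first checks that $g\le3$ cannot occur: a width-$3$ lattice polygon with at most $3$ interior lattice points is so thin that the analysis of the second paragraph produces a $(-1)$-curve meeting $C$ in one point, so $(S,C)$ would not be relatively minimal (equivalently, such a $C$ would be hyperelliptic or a smooth plane quartic, which are excluded). Granted $g\ge4$, the quantities $\ell_1$ and $\ell_2$ are bounded below by the numbers of interior lattice points on $w=1$ and $w=2$, and running the same enumeration — eliminating, via relative minimality, every configuration with $C^2\le16$ — yields $C^2\ge18$. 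The technical heart, and the only real obstacle, is making the relative-minimality criterion of the second paragraph precise and then carrying out the finite but delicate classification of the low-area width-$q$ polygons; once the criterion is in place, each remaining case is a short computation in the parameters $\alpha_j,\beta_j$ (resp. $\ell_j$) and $q'$.
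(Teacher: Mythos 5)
Your overall strategy coincides with the paper's: convert everything into lattice-polygon data ($C^2=2\,\mathrm{Area}(\square_C)$, $g=\#$ interior lattice points, lattice width $=q$) and eliminate all width-$q$ polygons of small area using relative minimality. But the tool you propose for the elimination is too weak, and the part you defer (``making the relative-minimality criterion precise and carrying out the classification'') is not a technicality --- it is the entire content of the proposition, and your sketch of the criterion would not survive being made precise. Your criterion only detects a violation when \emph{both} cones adjacent to the normal of a lattice-length-$1$ edge are already smooth and the flanking normals sum to the edge normal; when an adjacent cone of the normal fan is singular you treat the edge as safe. That is backwards. Since $C$ is nef, $\Delta_S$ refines the normal fan of $\square_C$, and every ray of $\Delta_S$ which is not an edge normal satisfies $C.D=0$; relative minimality (Definition \ref{rebe}) therefore forces each cone of the normal fan to be subdivided by exactly its minimal resolution (any further subdivision of a cone, smooth or singular, contains an added ray $E$ with $E^2=-1$ and $C.E=0$), and the edge normal must then be tested against its \emph{actual} neighbours in this forced fan. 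Concretely, the width-$2$ polygon with vertices $(0,0),(3,0),(2,2),(0,2)$ has $g=2$ and $C^2=10<12$; its unique lattice-length-$1$ edge has normal $(2,1)$ flanked on both sides by singular cones, so your criterion does not exclude it --- yet in the forced refinement the neighbours of $(2,1)$ are $(1,0)$ and $(1,1)$, whose sum is $(2,1)$, so $D_{(2,1)}$ is a $(-1)$-curve with $C.D_{(2,1)}=1$ and the polygon is inadmissible. So your assertion that every surviving low-area polygon carries a length-$1$ edge ``whose two flanking edges are sharp in the sense above'' is false as stated, and without repairing this the enumeration cannot be completed.

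Two further slips. For $q=3$ you dispose of $g\leq 3$ by saying such a curve ``would be hyperelliptic or a smooth plane quartic, which are excluded''; but hyperelliptic curves are \emph{not} excluded by the hypotheses of Theorem \ref{mthm} (only curves isomorphic to smooth plane curves are), so those configurations must be eliminated by the same polygon analysis, not by fiat. Also your thresholds $C^2\leq 10$ and $C^2\leq 16$ should be $C^2\leq 11$ and $C^2\leq 17$: $C^2=2g-2+\sum_i C.D_i$ can be odd (e.g.\ the width-$3$ quadrilateral $(0,0),(-4,0),(-4,-3),(-3,-3)$ has $g=3$, $C^2=15$, and is one of the cases the paper must and does exclude, there via the non-plane-curve hypothesis), so restricting to even values silently drops cases. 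For comparison, the paper does not attempt a general edge-by-edge criterion: it uses relative minimality together with the normalization inequalities ($d(C,(1,-1))\geq q'$, etc.) to force the shape of $\square_C$ directly --- for $q=3$, a trapezium with slant sides of lattice direction $(1,1)$, with top length $q'$ and parameters $m\geq n\geq 0$ --- and then checks the finitely many remaining $(q',m,n)$, invoking the hypothesis that $C$ is not a plane curve where needed. Your plan can in principle be completed, but only after replacing your second paragraph by the correct ``forced minimal refinement'' statement and actually performing the finite case check for both $q=2$ and $q=3$.
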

\begin{proof}
We shall prove only for the case $q=3$. Considering the 
relative minimality of $C$ and the reflection of $\square_{C}$ about 
$z$-axis, we can assume that $O$ is contained in $\square_{C}$. 
Moreover, we see that the right shape of $\square_{C}$ must be a segment 
connecting $O$ and $(-3m,-3)$, where $m$ is a non-negative integer. 
If $m\neq 0$, then by the condition $d(C,(1,-1))\geq q'$, we see 
that $(-q',0)\in \square_{C}$ and the left shape of $\square_{C}$ is 
a segment connecting $(-q',0)$ and $(-q'+3n,-3)$, where $n$ is 
non-negative integer. In the case $m=0$, by the relative 
minimality of $C$, the 
left shape of $\square_{C}$ is a segment connecting $(-q',-3)$ and 
$(-q'+3l,0)$, or $(-q',0)$ and $(-q'+3l,-3)$, where $l$ is non-negative 
integer. Consequently, without 
loss of generality, we can assume that $\square_{C}$ is a trapezium 
(possibly a triangle) as in Fig. \ref{fig36}, 
%
\begin{figure}[h]\hspace{-10mm}
\begin{picture}(450,45)
\linethickness{0.1mm}
\multiput(170,13)(0,10){4}{\line(1,0){110}}
\multiput(170,13)(10,0){12}{\line(0,1){30}}
\linethickness{0.4pt}
\multiput(169.7,42.5)(0,0.42){3}{\line(1,0){110}}
\multiput(220.3,12.6)(-0.18,0.36){3}{\line(2,1){60}}
\multiput(199.8,12.7)(0,0.42){3}{\line(1,0){20}}
\multiput(169.8,42.5)(0.23,0.23){4}{\line(1,-1){30}}
\color{white}
\put(210,25){\line(0,1){10.3}}
\put(220,24.7){\line(0,1){5.9}}
\put(204,33){\line(1,0){10.5}}
\color{black}
\put(200,13){\circle*{4}}
\put(220,13){\circle*{4}}
\put(205,26){$\square_{C}$}
\put(282,40){$O$}
\put(184,2){\vector(3,2){12}}
\put(118,0.5){$(-q'+3n,-3)$}
\put(205,0.5){$(-3m,-3)$}
\end{picture}
\caption{}\label{fig36}
\end{figure}
%
where the inequalities $m\geq n\geq 0$ hold. 
The inequality $C^{2}\geq 18$ is obvious if $q'\geq 6$. 
If $q'=5$, then there exist two possibilities 
$(m,n)=(0,0),(1,0)$. 
In the cases $q'=3,4$, since $C$ is not isomorphic to 
a plane curve, $(m,n)$ must be $(0,0)$. In each case, we obtain 
$C^{2}\geq 18$ by computing. 
\end{proof}
\begin{proposition}\label{k4}
Let $C$ be a curve as in Theorem \ref{mthm}, and 
assume that $(S,C)$ is relatively minimal. If $q=4$ and $C^{2}\leq 16$, 
the shape of $\square_{C}$ is one of the six types in 
Fig. \ref{fig35}, provided that we ignore congruence relations. 
%
\begin{figure}[h]\hspace{-9mm}
\begin{picture}(450,55)
\linethickness{0.1mm}
\multiput(53,14)(0,10){5}{\line(1,0){40}}
\multiput(53,14)(10,0){5}{\line(0,1){40}}
\multiput(53.1,33.6)(-0.32,0.38){3}{\line(2,1){40}}
\multiput(52.5,33.8)(0.33,0.33){3}{\line(1,-1){20.2}}
\multiput(73.1,13.8)(-0.36,0.18){3}{\line(1,2){20}}
\multiput(113,14)(0,10){5}{\line(1,0){40}}
\multiput(113,14)(10,0){5}{\line(0,1){40}}
\multiput(112.5,53.2)(0,0.22){6}{\line(1,0){40.9}}
\multiput(112.5,53.5)(0.3,0.15){4}{\line(1,-2){20}}
\multiput(153.3,53.5)(-0.3,0.15){4}{\line(-1,-2){20}}
\multiput(173,14)(0,10){5}{\line(1,0){40}}
\multiput(173,14)(10,0){5}{\line(0,1){40}}
\multiput(173.1,33.6)(-0.32,0.38){3}{\line(2,1){40}}
\multiput(193.1,13.8)(-0.36,0.18){3}{\line(1,2){20}}
\multiput(172.6,34)(0.22,0){6}{\line(0,-1){20.3}}
\multiput(172.5,13.7)(0,0.22){6}{\line(1,0){20.3}}
\multiput(233,14)(0,10){5}{\line(1,0){40}}
\multiput(233,14)(10,0){5}{\line(0,1){40}}
\multiput(233.2,33.6)(-0.32,0.38){3}{\line(2,1){40.1}}
\multiput(232.6,34)(0.22,0){6}{\line(0,-1){20}}
\multiput(272.3,54.1)(0.22,0){6}{\line(0,-1){20.2}}
\multiput(233.2,13.6)(-0.32,0.38){3}{\line(2,1){40.1}}
\multiput(293,14)(0,10){5}{\line(1,0){40}}
\multiput(293,14)(10,0){5}{\line(0,1){40}}
\multiput(293.2,33.6)(-0.32,0.38){3}{\line(2,1){40.1}}
\multiput(292.4,33.9)(0.33,0.33){3}{\line(1,-1){20.3}}
\multiput(332.3,54.1)(0.22,0){6}{\line(0,-1){20.2}}
\multiput(313.2,13.6)(-0.33,0.33){3}{\line(1,1){20.1}}
\multiput(353,14)(0,10){5}{\line(1,0){40}}
\multiput(353,14)(10,0){5}{\line(0,1){40}}
\multiput(353.2,33.5)(-0.25,0.25){4}{\line(1,1){20.1}}
\multiput(353,33.3)(0.25,0.25){4}{\line(1,-1){19.8}}
\multiput(392.7,33.5)(0.25,0.25){4}{\line(-1,1){20.2}}
\multiput(393.2,33.5)(-0.25,0.25){4}{\line(-1,-1){20.1}}
\put(54,0){$C^{2}=12$}
\put(114,0){$C^{2}=16$}
\put(174,0){$C^{2}=16$}
\put(234,0){$C^{2}=16$}
\put(294,0){$C^{2}=16$}
\put(354,0){$C^{2}=16$}
\end{picture}
\caption{}\label{fig35}
\end{figure}
\end{proposition}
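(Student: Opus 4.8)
The plan is to argue exactly as in Proposition \ref{k3}: use relative minimality and symmetry to reduce $\square_C$ to a normal form, then enumerate. First I would invoke relative minimality and the reflection of $\square_C$ about the $z$-axis to arrange that the point $O=l(C,(0,1))\cap l(C,(1,0))$ lies in $\square_C$, and that the portion of the boundary supported on $l(C,(1,0))$ and $l(C,(0,-1))$ has the shape forced by $C.D_i\ge 2$ for every $(-1)$-curve $D_i$ meeting $C$; concretely, since $q=d(C,(0,1))=4$, the right side of $\square_C$ must be the segment joining $O$ to a point $(-4m,-4)$ with $m\ge 0$, and similarly the condition $d(C,(1,-1))\ge q'$ together with relative minimality pins down the left side to a segment joining $(-q',0)$ (or $(-q',-4)$) to a point of the form $(-q'+4n,-4)$ (or $(-q'+4n,0)$). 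This puts $\square_C$ into the standard trapezoidal normal form with horizontal sides of lattice lengths related to $q$ and $q'$, exactly paralleling Figure \ref{fig36}.

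Next I would compute $C^2$ in terms of $q=4$, $q'$, and the two ``slant parameters'' $m,n$ (using the general formula $C^2=qq'+(\text{correction terms})$ already exploited in Proposition \ref{34}, or just directly as the normalized area expression $2\,\mathrm{Area}(\square_C)+(\text{boundary terms})$ for $C^2=D^2$ with $D$ the class of $C$). Since Proposition \ref{34} gives $C^2\ge \tfrac34 q^2=12$, the constraint $C^2\le 16$ leaves only finitely many $(q',m,n)$ to check: necessarily $q'\in\{4,5,6\}$ (larger $q'$ already forces $C^2\ge 18>16$, as in the end of the proof of Proposition \ref{k3}), and for each such $q'$ the slant parameters are bounded. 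For $q'=4$ the constraint that $C$ is not a smooth plane curve rules out the triangle case and one is left with the first (the $C^2=12$ triangle) and second pictures; for $q'=5,6$ one reads off the remaining four polygons with $C^2=16$. Throughout I keep using $d(C,(1,\pm1))\ge q'$ (Lemma, part of the standing assumptions) to discard trapezoids that are ``too skew''.

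The main obstacle — really the only nontrivial point — is bookkeeping: making sure the case division on $q'$ and on which corners of the normal-form trapezoid actually occur is exhaustive and non-overlapping, and that in each surviving case the polygon genuinely comes from a relatively minimal $(S,C)$ (i.e.\ that no further $(-1)$-curve $D_i$ with $C.D_i\le 1$ has been overlooked — equivalently, that each edge of $\square_C$ of the relevant type has lattice length $\ge 2$, or is one of the four ``axis'' edges). I would handle this by the same reflection reductions used in Propositions \ref{k3} and \ref{34}, so that only the shapes in Figure \ref{fig35}, up to the stated congruences, remain. Having done so, a direct computation of $C^2$ on each of the six polygons confirms the claimed values $12$ and $16$, completing the proof.
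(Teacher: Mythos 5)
Your proposed reduction to a ``trapezoidal normal form'' is exactly the step that breaks down when $q=4$, and this is a genuine gap rather than a bookkeeping issue. In Proposition \ref{k3} the height of $\square_{C}$ is $3$, and relative minimality (lattice length $\geq 2$ for every edge dual to a $(-1)$-divisor) does force the part of $\partial\square_{C}$ joining $l(C,(0,1))$ to $l(C,(0,-1))$ to be a \emph{single} segment whose horizontal displacement is a multiple of $3$. For height $4$ this is no longer true: a side boundary may consist of two edges of lattice length $2$ meeting at an intermediate vertex on the line $w=-2$, or be a single edge of lattice length $2$ with horizontal displacement $\equiv 2 \pmod 4$, and neither configuration violates $C.D_{i}\geq 2$. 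Consequently your family ``top side of length $q'$, slant sides joining $O$ to $(-4m,-4)$ and $(-q',0)$ to $(-q'+4n,-4)$'' does not exhaust the possibilities, even up to unimodular equivalence: of the six polygons in Fig.~\ref{fig35}, the last (the diamond with vertices $(0,2),(2,0),(4,2),(2,4)$), the parallelogram with two vertical sides, the triangle with vertices $(0,4),(4,4),(2,0)$ (slant displacement $(-2,-4)$), and --- most importantly --- the $C^{2}=12$ triangle whose three edges all have lattice length $2$ (this is precisely the $(g,q)=(4,4)$ curve that drives the exceptional case of Theorem \ref{mthm}(ii) and Lemma \ref{g4}) are not of your normal form, so your enumeration would miss them. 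Your subsequent claim that for $q'=4$ one is ``left with the first and second pictures'' therefore cannot be reached from the parametrization you set up.

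This is why the paper's proof of Proposition \ref{k4} is structured quite differently from that of Proposition \ref{k3}: it first passes to the auxiliary curve $C_{0}$ of Lemma \ref{ikkakuusagi} and uses the case division (a)/(b)/(c) and the inequalities (\ref{furogga}), (\ref{ariahan}) of Proposition \ref{34} to force $q'=4$ (eliminating $q'=5$ via $(a,b,c,e)=(0,4,3,2)$ and relative minimality), and then carries out a vertex-by-vertex analysis of $\square_{C}$ itself: the position of the vertex $P$ on $l(C,(0,1))$ (reduced to $P=O$ or $P=(-2,0)$), the integer $a$ with $l(C,(1,-1))$ through $(0,a)$, and the membership of the corner points $R_{1}=(-2,-4)$, $R_{2}=(-4,-4)$ in $\square_{C}$, producing the shapes of Figs.~\ref{fig27}, \ref{fig17}, \ref{fig28} and the diamond. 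If you want to salvage your approach you would need to replace the single-segment normal form by an analysis allowing broken side boundaries (and sides of lattice length $2$), at which point you are essentially reproducing the paper's case analysis.
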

\vspace{-4mm}
\begin{proof}
We take a curve $C_{0}$ as in Lemma \ref{ikkakuusagi}. 
Recall the three cases (a), (b) and (c) in the proof of 
Lemma \ref{ikkakuusagi}. In case (a) (that is, the upper side 
of $\square_{C_{0}}$ is a horizontal line of length $q'$), 
an easy computation gives $C_{0}^{2}\geq 16$. Suppose that $C^{2}=16$. 
Then, since $C^{2}=C_{0}^{2}=16$, we obtain the five possibilities 
for the shape of $\square_{C_{0}}$ as in Fig. \ref{fig12}. 
%
\begin{figure}[h]\hspace{-5.5mm}
\begin{picture}(450,55)
\linethickness{0.1mm}
\multiput(78,15)(0,10){5}{\line(1,0){40}}
\multiput(78,15)(10,0){5}{\line(0,1){40}}
\put(24,1){$(a,b,c,e)=(0,4,0,0)$}
\multiput(148,15)(0,10){5}{\line(1,0){40}}
\multiput(148,15)(10,0){5}{\line(0,1){40}}
\put(148,1){$(0,4,1,0)$}
\multiput(213,15)(0,10){5}{\line(1,0){40}}
\multiput(213,15)(10,0){5}{\line(0,1){40}}
\put(213,1){$(0,4,2,0)$}
\multiput(278,15)(0,10){5}{\line(1,0){40}}
\multiput(278,15)(10,0){5}{\line(0,1){40}}
\put(278,1){$(0,4,3,0)$}
\multiput(343,15)(0,10){5}{\line(1,0){40}}
\multiput(343,15)(10,0){5}{\line(0,1){40}}
\put(343,1){$(0,4,4,0)$}
\linethickness{0.4pt}
\multiput(77.5,54.5)(0,0.33){4}{\line(1,0){41}}
\multiput(77.5,14.5)(0.33,0){4}{\line(0,1){41}}
\multiput(78.3,14.5)(-0.25,0.25){4}{\line(1,1){40.1}}
\multiput(147.5,54.5)(0,0.33){4}{\line(1,0){40.9}}
\multiput(147.4,55)(0.3,0.1){4}{\line(1,-4){10.1}}
\multiput(158,14.3)(-0.28,0.23){4}{\line(3,4){30.4}}
\multiput(212.5,54.5)(0,0.33){4}{\line(1,0){40.9}}
\multiput(212.5,54.5)(0.3,0.15){4}{\line(1,-2){20}}
\multiput(253.3,54.5)(-0.3,0.15){4}{\line(-1,-2){20}}
\multiput(277.5,54.5)(0,0.33){4}{\line(1,0){40.9}}
\multiput(277.8,54.3)(0.28,0.23){4}{\line(3,-4){30.1}}
\multiput(318.5,55)(-0.3,0.1){4}{\line(-1,-4){10.1}}
\multiput(342.5,54.5)(0,0.33){4}{\line(1,0){40.9}}
\multiput(342.6,54.6)(0.25,0.25){4}{\line(1,-1){40.1}}
\multiput(382.5,14.5)(0.33,0){4}{\line(0,1){41}}
\color{white}
\multiput(87.5,40)(0.3,0){3}{\line(0,1){11.8}}
\multiput(97.5,38)(0.3,0){3}{\line(0,1){6}}
\multiput(80,44.5)(0,0.3){3}{\line(1,0){17.7}}
\multiput(157.5,40)(0.3,0){3}{\line(0,1){10.2}}
\multiput(167.5,38)(0.3,0){3}{\line(0,1){7.5}}
\multiput(153,44.5)(0,0.3){3}{\line(1,0){17.7}}
\multiput(222.5,40)(0.3,0){3}{\line(0,1){10.2}}
\multiput(232.5,38)(0.3,0){3}{\line(0,1){7.5}}
\multiput(221,44.5)(0,0.3){3}{\line(1,0){17.7}}
\multiput(297.5,40)(0.3,0){3}{\line(0,1){10.2}}
\multiput(307.5,37.2)(0.3,0){3}{\line(0,1){9}}
\multiput(291,44.5)(0,0.3){3}{\line(1,0){18}}
\multiput(362.5,40)(0.3,0){3}{\line(0,1){10.2}}
\multiput(372.5,38)(0.3,0){3}{\line(0,1){8}}
\multiput(360,44.5)(0,0.3){3}{\line(1,0){17.6}}
\color{black}
\put(81,42){$\square_{C_{0}}$}
\put(154,41){$\square_{C_{0}}$}
\put(222,41){$\square_{C_{0}}$}
\put(292,41){$\square_{C_{0}}$}
\put(361,41){$\square_{C_{0}}$}
\end{picture}
\caption{}\label{fig12}
\end{figure}
%
Note that $\square_{C}=\square_{C_{0}}$ by Remark \ref{animaruzonbi}. 
The cases of $(0,4,0,0)$ and $(0,4,4,0)$ are excluded by the 
assumption that $C$ is not isomorphic to a plane curve. On the other 
hand, the cases of $(0,4,1,0)$ and $(0,4,3,0)$ contradict the relative 
minimality. By a similar argument, one can show the lemma in case (b). 

Let us consider case (c). We follow the idea in the proof of 
Proposition \ref{34}. Namely, we divide the situation into the three 
cases (i), (ii) and (iii). In cases (i) and (ii), we have proved 
the inequality $C_{0}^{2}\geq 16$. Moreover, if equality holds, 
then $q'$ must be equal to four. In case (iii), by the 
inequality (\ref{ariahan}), we have $q'\leq 5$ if $C_{0}^{2}\leq 16$. 
Suppose that $q'=5$ and $C_{0}^{2}=16$. Then we deduce 
that $c-e=q'-q$, $e=(2q-a-b)/2$ and $a-b=-q$ by (\ref{furogga}), 
that is, $(a,b,c,e)=(0,4,3,2)$ (see Fig. \ref{fig16}). 
%
\begin{figure}[h]\hspace{-11mm}
\begin{picture}(450,41)
\linethickness{0.1mm}
\multiput(199,0)(0,10){5}{\line(1,0){50}}
\multiput(199,0)(10,0){6}{\line(0,1){40}}
\linethickness{0.4pt}
\multiput(198.8,19.5)(-0.15,0.4){3}{\line(5,2){50.3}}
\multiput(198.9,19.5)(0.25,0.35){3}{\line(3,-2){29.8}}
\multiput(249.4,39.8)(-0.36,0.19){3}{\line(-1,-2){20.2}}
\color{white}
\put(219,15.3){\line(0,1){10.3}}
\put(229,13.7){\line(0,1){7.5}}
\put(215,20){\line(1,0){17.2}}
\color{black}
\put(215.5,16.5){$\square_{C_{0}}$}
\put(251.5,37){$O$}
\end{picture}
\caption{}\label{fig16}
\end{figure}
%
Note that $\square_{C}=\square_{C_{0}}$ by 
Remark \ref{animaruzonbi}. This contradicts the relative minimality. 
Hence it is sufficient to consider the case $q'=4$. 

Let us examine the possibility of the shape of $\square_{C}$ satisfying 
$q=q'=4$ and $C^{2}\leq 16$. We denote by $P$ the vertex of 
$\square_{C}$ on $l(C,(0,1))$. Note that $P\neq (-1,0),(-3,0)$. Indeed, 
if $(-1,0)$ (resp. $(-3,0)$) is contained in $\square_{C}$, then 
also $(0,0)$ (resp. $(-4,0)$) is contained in $\square_{C}$ by the 
relative minimality, which contradicts to the assumption in (c). Since 
the case $P=(-4,0)$ is essentially equivalent to the case $P=O$, 
it is sufficient to consider the two cases $P=O,(-2,0)$. 
Assume that $P=O$. In this case, we can assume that none of 
$(-3,0)$, $(-4,0)$ and $(-4,-1)$ is contained in $\square_{C}$. 
Indeed, if not, $(-4,0)$ is contained in $\square_{C}$ by 
the relative minimality, a contradiction. 
We can take a unique integer $a$ with $-4\leq a\leq 0$ such 
that the line $l(C,(1,-1))$ passes through $(0,a)$. 
Since $d(C,(1,-1))\geq 4$, the cases $a=0,-1$ do not occur. 
If $a=-3$ or $-4$, then by the relative minimality, 
the point $(0,-4)$ must be contained in $\square_{C}$. Hence, by an 
argument similar to that in case (a), we see that $\square_{C}$ is 
a triangle with vertices $(0,0)$, $(0,-4)$ and $(-4,-2)$. In the 
case $a=-2$, since $d(C,(1,-1))\geq 4$, at least one of 
the points $(-2,0)$, $(-3,-1)$ and $(-4,-2)$ is contained in 
$\square_{C}$. 
%
%
We put $R_{1}=(-2,-4)$ and $R_{2}=(-4,-4)$. 
Assume that $R_{1},R_{2}\in \square_{C}$. Then, by computing, 
we obtain the three types of $\square_{C}$ satisfying $C^{2}\leq 16$ as 
in Fig. \ref{fig27}. 
%
\begin{figure}[h]\hspace{-11mm}
\begin{picture}(450,44)
\linethickness{0.1mm}
\multiput(143,0)(0,10){5}{\line(1,0){40}}
\multiput(143,0)(10,0){5}{\line(0,1){40}}
\multiput(143.1,19.6)(-0.32,0.38){3}{\line(2,1){40}}
\multiput(163.1,-0.2)(-0.36,0.18){3}{\line(1,2){20}}
\multiput(142.6,20)(0.22,0){6}{\line(0,-1){20.3}}
\multiput(142.5,-0.3)(0,0.22){6}{\line(1,0){20.3}}
\multiput(203,0)(0,10){5}{\line(1,0){40}}
\multiput(203,0)(10,0){5}{\line(0,1){40}}
\multiput(213.1,29.5)(-0.21,0.38){3}{\line(3,1){30}}
\multiput(223.1,-0.2)(-0.36,0.18){3}{\line(1,2){20}}
\multiput(203.4,0.1)(-0.38,0.21){3}{\line(1,3){9.9}}
\multiput(202.5,-0.3)(0,0.22){6}{\line(1,0){20.3}}
\multiput(263,0)(0,10){5}{\line(1,0){40}}
\multiput(263,0)(10,0){5}{\line(0,1){40}}
\multiput(263.3,0.2)(-0.36,0.18){3}{\line(1,2){19.8}}
\multiput(283.1,-0.2)(-0.36,0.18){3}{\line(1,2){20}}
\multiput(262.5,-0.3)(0,0.22){6}{\line(1,0){20.3}}
\multiput(282.5,39.2)(0,0.22){6}{\line(1,0){20.3}}
\color{white}
\put(163,15){\line(0,1){10.3}}
\put(154.5,20){\line(1,0){16}}
\put(223,14){\line(0,1){10.3}}
\put(213.2,20){\line(1,0){16}}
\put(283,15){\line(0,1){10.3}}
\put(274.5,20){\line(1,0){16}}
\multiput(275.2,22.5)(0.25,0){5}{\line(0,1){2}}
\color{black}
\put(154.9,17){$\square_{C}$}
\put(213.5,17){$\square_{C}$}
\put(275,17){$\square_{C}$}
\end{picture}
\caption{}\label{fig27}
\end{figure}
%
By the relative minimality, the second type is excluded. 
Assume that $R_{1}\notin \square_{C}$ and $R_{2}\in 
\square_{C}$. In this case, by the relative minimality, the lower side 
of $\square_{C}$ must be the segment connecting two points $(0,-2)$ and 
$R_{2}$. Then, by computing, we obtain the three types 
of $\square_{C}$ satisfying $C^{2}\leq 16$ as in Fig. \ref{fig17}. 
%
\begin{figure}[h]\hspace{-10mm}
\begin{picture}(450,42)
\linethickness{0.1mm}
\multiput(143,0)(0,10){5}{\line(1,0){40}}
\multiput(143,0)(10,0){5}{\line(0,1){40}}
\multiput(143.1,19.6)(-0.32,0.38){3}{\line(2,1){40}}
\multiput(143.3,-0.3)(-0.32,0.38){3}{\line(2,1){39.8}}
\multiput(142.6,20)(0.22,0){6}{\line(0,-1){20.3}}
\multiput(182.1,40)(0.22,0){6}{\line(0,-1){20.3}}
\color{white}
\put(159.8,20){\line(1,0){16.5}}
\put(163,17.7){\line(0,1){10.5}}
\put(173,16.4){\line(0,1){7.4}}
\color{black}
\put(160.5,19.3){$\square_{C}$}
\multiput(203,0)(0,10){5}{\line(1,0){40}}
\multiput(203,0)(10,0){5}{\line(0,1){40}}
\multiput(213.1,29.5)(-0.21,0.38){3}{\line(3,1){30}}
\multiput(203.4,0.1)(-0.38,0.21){3}{\line(1,3){9.9}}
\multiput(242.1,40)(0.22,0){6}{\line(0,-1){20.3}}
\multiput(203.3,-0.3)(-0.32,0.38){3}{\line(2,1){39.8}}
\color{white}
\put(217.5,20){\line(1,0){16.5}}
\put(223,18){\line(0,1){10.5}}
\put(233,18){\line(0,1){6}}
\color{black}
\put(218,19.5){$\square_{C}$}
\multiput(263,0)(0,10){5}{\line(1,0){40}}
\multiput(263,0)(10,0){5}{\line(0,1){40}}
\multiput(263.3,0.2)(-0.36,0.18){3}{\line(1,2){19.8}}
\multiput(282.5,39.2)(0,0.22){6}{\line(1,0){20.3}}
\multiput(302.1,40)(0.22,0){6}{\line(0,-1){20.3}}
\multiput(263.3,-0.3)(-0.32,0.38){3}{\line(2,1){39.8}}
\color{white}
\put(278.5,20){\line(1,0){16.3}}
\put(283,18){\line(0,1){10.5}}
\put(293,17.2){\line(0,1){7}}
\color{black}
\put(279,19.5){$\square_{C}$}
\end{picture}
\caption{}\label{fig17}
\end{figure}
%
By the relative minimality, the second type is excluded. Assume that 
$R_{1}\in \square_{C}$ and $R_{2}\notin \square_{C}$. 
The relative minimality implies that neither $(-3,-4)$ nor $(-4,-3)$ is 
contained in $\square_{C}$. 
%
%
%
%
Hence there exist the four possibilities for the shape 
of $\square_{C}$ as in Fig. \ref{fig28}. 
%
\begin{figure}[h]\hspace{-19.5mm}
\begin{picture}(450,42)
\linethickness{0.1mm}
\multiput(85,0)(0,10){5}{\line(1,0){40}}
\multiput(85,0)(10,0){5}{\line(0,1){40}}
\multiput(85.1,19.4)(-0.23,0.23){4}{\line(1,1){20}}
\multiput(105.3,-0.3)(-0.23,0.23){4}{\line(1,1){19.8}}
\multiput(85.1,19.2)(0.23,0.23){4}{\line(1,-1){19.7}}
\multiput(124.1,40)(0.22,0){6}{\line(0,-1){20.3}}
\multiput(104.6,39.2)(0,0.22){6}{\line(1,0){20.3}}
\color{white}
\put(101.3,20){\line(1,0){16.5}}
\put(105,17.6){\line(0,1){10.7}}
\put(115,16.3){\line(0,1){7.5}}
\color{black}
\put(102,19){$\square_{C}$}
\put(81.8,-2.1){\footnotesize $\times$}
\put(91.8,-2.1){\footnotesize $\times$}
\put(81.8,7.9){\footnotesize $\times$}
\put(121.8,-2.1){\footnotesize $\times$}
\put(111.8,-2.1){\footnotesize $\times$}
\put(121.8,7.9){\footnotesize $\times$}
\put(81.8,37.9){\footnotesize $\times$}
\put(91.8,37.9){\footnotesize $\times$}
\put(81.8,27.9){\footnotesize $\times$}
\multiput(145,0)(0,10){5}{\line(1,0){40}}
\multiput(145,0)(10,0){5}{\line(0,1){40}}
\multiput(145.1,19.4)(-0.23,0.23){4}{\line(1,1){20}}
\multiput(145.1,19.2)(0.23,0.23){4}{\line(1,-1){19.7}}
\multiput(164.6,39.2)(0,0.22){6}{\line(1,0){20.3}}
\multiput(165.3,0.2)(-0.36,0.18){3}{\line(1,2){19.8}}
\color{white}
\put(156,20){\line(1,0){16.5}}
\put(165,17.6){\line(0,1){10.7}}
\color{black}
\put(156.5,19){$\square_{C}$}
\put(141.8,-2.1){\footnotesize $\times$}
\put(151.8,-2.1){\footnotesize $\times$}
\put(141.8,7.9){\footnotesize $\times$}
\put(181.8,-2.1){\footnotesize $\times$}
\put(171.8,-2.1){\footnotesize $\times$}
\put(181.8,7.9){\footnotesize $\times$}
\put(141.8,37.9){\footnotesize $\times$}
\put(151.8,37.9){\footnotesize $\times$}
\put(141.8,27.9){\footnotesize $\times$}
\multiput(205,0)(0,10){5}{\line(1,0){40}}
\multiput(205,0)(10,0){5}{\line(0,1){40}}
\multiput(205.1,19.5)(-0.32,0.38){3}{\line(2,1){39.8}}
\multiput(225.3,-0.3)(-0.23,0.23){4}{\line(1,1){19.8}}
\multiput(204.9,19.4)(0.23,0.23){4}{\line(1,-1){19.9}}
\multiput(244.1,40)(0.22,0){6}{\line(0,-1){20.3}}
\color{white}
\put(221.3,20){\line(1,0){17.3}}
\put(225,15.6){\line(0,1){10.5}}
\put(235,14){\line(0,1){7.6}}
\color{black}
\put(222,16.8){$\square_{C}$}
\put(201.8,-2.1){\footnotesize $\times$}
\put(211.8,-2.1){\footnotesize $\times$}
\put(201.8,7.9){\footnotesize $\times$}
\put(241.8,-2.1){\footnotesize $\times$}
\put(231.8,-2.1){\footnotesize $\times$}
\put(241.8,7.9){\footnotesize $\times$}
\put(201.8,37.9){\footnotesize $\times$}
\put(211.8,37.9){\footnotesize $\times$}
\put(201.8,27.9){\footnotesize $\times$}
\multiput(265,0)(0,10){5}{\line(1,0){40}}
\multiput(265,0)(10,0){5}{\line(0,1){40}}
\multiput(265.1,19.5)(-0.32,0.38){3}{\line(2,1){39.8}}
\multiput(285.3,-0.2)(-0.38,0.32){3}{\line(1,2){19.9}}
\multiput(264.9,19.4)(0.23,0.23){4}{\line(1,-1){19.8}}
\color{white}
\put(276.2,20){\line(1,0){17.2}}
\put(285,15.6){\line(0,1){10.5}}
\color{black}
\put(276.8,16.8){$\square_{C}$}
\put(261.8,-2.1){\footnotesize $\times$}
\put(271.8,-2.1){\footnotesize $\times$}
\put(261.8,7.9){\footnotesize $\times$}
\put(301.8,-2.1){\footnotesize $\times$}
\put(291.8,-2.1){\footnotesize $\times$}
\put(301.8,7.9){\footnotesize $\times$}
\put(261.8,37.9){\footnotesize $\times$}
\put(271.8,37.9){\footnotesize $\times$}
\put(261.8,27.9){\footnotesize $\times$}
\put(321,16){$\times$}
\put(332,16.5){:\,the point which is}
\put(337,2){not contained in $\square_{C}$}
\end{picture}
\caption{}\label{fig28}
\end{figure}
%
In the first case, we have $C^{2}=20$. 
Lastly, we consider the case $P=(-2,0)$. 
In order to avoid the duplication, we assume that $\square_{C}$ contains 
none of the four corner $O$, $(0,-4)$, $(-4,-4)$ and $(-4,0)$. Then 
only one possibility remains: $\square_{C}$ is a square with vertices 
$(-2,0)$, $(0,-2)$, $(-2,-4)$ and $(-4,-2)$. 
\end{proof}
\begin{proposition}\label{k5}
Let $C$ be a curve as in Theorem \ref{mthm}, and assume 
that $(S,C)$ is relatively minimal. If $q=5$, then $C^{2}\geq 25$. 
\end{proposition}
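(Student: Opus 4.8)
The plan is to proceed exactly as in the proofs of Propositions \ref{34} and \ref{k4}. Take a curve $C_{0}$ as in Lemma \ref{ikkakuusagi}; since $C^{2}\geq C_{0}^{2}$, it is enough to establish $C_{0}^{2}\geq 25$, and the one case where this bound on $C_{0}^{2}$ cannot be reached will be handled by arguing directly on $\square_{C}$, just as in the second half of the proof of Proposition \ref{k4}. Recall the case division (a), (b), (c) from the proof of Lemma \ref{ikkakuusagi}, and inside (c) the subdivision (i), (ii), (iii) introduced in the proof of Proposition \ref{34}. In cases (a) and (b) one has $C_{0}^{2}=qq'$, and in subcases (c)(i) and (c)(ii) one has $C_{0}^{2}\geq qq'$; since $q'\geq q=5$ always, all of these already give $C_{0}^{2}\geq q^{2}=25$.

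Hence only case (c)(iii) requires attention. Here I would reuse the identity $C_{0}^{2}=qq'+(a+c-q')(b+e-q)$ together with the normalisations $b+e\geq q$, $(q'-a)b\geq (q'-c)e$ and the line conditions $d(C_{0},(1,\pm 1))\geq q'$ from Lemma \ref{ikkakuusagi}(i). As in the proof of Proposition \ref{34}, when $a\geq b$ one obtains $C_{0}^{2}\geq qq'\geq 25$, so one may assume $a<b$. In that case inequality (\ref{ariahan}) gives $C_{0}^{2}\geq qq'-\tfrac{q^{2}}{4}=5q'-\tfrac{25}{4}$, which is $\geq 25$ as soon as $q'\geq 7$. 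Thus the only cases that genuinely remain are $q'=5$ and $q'=6$.

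For these two values I would abandon $C_{0}$ and analyse $\square_{C}$ itself, following the model of the last part of the proof of Proposition \ref{k4}. We know $d(C,(0,1))=q=5$ and $d(C,(1,0))=q'\in\{5,6\}$, so $\square_{C}$ lies in a bounding box of height $5$ and width $q'$; we know the slope restrictions imposed by $d(C,(1,\pm 1))\geq q'$; and we may use the relative minimality of $(S,C)$ to forbid any vertex of $\square_{C}$ lying at lattice distance one from a corner of the bounding box, since (as in Proposition \ref{k4}) such a vertex would force that corner itself to belong to $\square_{C}$, contradicting the shape imposed by case (c). Combined with the hypothesis that $C$ is not isomorphic to a smooth plane curve, these constraints leave only finitely many admissible polygons $\square_{C}$; for each one a direct computation of $C^{2}$ from $\square_{C}$ (as elsewhere in this section) either yields $C^{2}\geq 25$ or exhibits a $T$-invariant divisor $D_{i}$ with $D_{i}^{2}=-1$ and $C.D_{i}\leq 1$, contradicting relative minimality. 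Remark \ref{animaruzonbi} ensures that nothing is lost in passing between $C$ and $C_{0}$ when $C^{2}=C_{0}^{2}$.

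The main obstacle is precisely this final enumeration for $q'\in\{5,6\}$. It is elementary but lengthy: it amounts to carrying out, for $q=5$, the analogue of the corner-by-corner case analysis performed for $q=4$ in Proposition \ref{k4}, tracking all positions of the unique top and bottom vertices of $\square_{C}$ and the slopes of the incident edges that are pinned down by $d(C,(1,\pm 1))\geq q'$. I expect the relative-minimality reductions to collapse the list of cases to a short one, exactly as they did for the smaller values of $q$, so that the verification $C^{2}\geq 25$ in each surviving case is immediate.
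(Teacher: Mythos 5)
Your reduction is sound and coincides with the paper's opening step: cases (a), (b) and (c)(i)--(ii) give $C_{0}^{2}\geq qq'\geq 25$, and in (c)(iii) the inequality (\ref{ariahan}) with $q=5$ leaves only $q'\in\{5,6\}$ (the paper records this as ``$q'\leq 6$, $c-e\geq q'-5$ and $a<b$ if $C_{0}^{2}\leq 24$''). But from that point on your text is a plan, not a proof: the entire substance of the paper's argument is the finite case analysis you defer with ``I expect the relative-minimality reductions to collapse the list of cases.'' For $q'=6$ the paper does not enumerate $\square_{C}$ at all; it stays with the $(a,b,c,e)$-parametrization, finds that only $(0,5,3,2)$ and $(0,5,4,3)$ survive, observes that both give $C_{0}^{2}=24$ --- so the bound $25$ is \emph{not} reachable through $C_{0}$ alone --- and then uses relative minimality to force $\square_{C}\neq\square_{C_{0}}$, whence $C^{2}>C_{0}^{2}=24$ by Remark \ref{animaruzonbi}. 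You cite that remark but never make this strict-inequality step, which is the only way past the value $24$.

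For $q'=5$ the gap is larger. Your organizing device --- forbidding vertices at lattice distance one from a corner of the bounding box because the corner would then lie in $\square_{C}$, ``contradicting the shape imposed by case (c)'' --- does not do the work you want: case (c) constrains only the horizontal lines $l(C_{0},(0,\pm 1))$, and a corner of the box lying in $\square_{C}$ is not a contradiction at all. Indeed, after the paper disposes of the corner-free configurations (Fig.\ \ref{fig02}) it reduces, by reflection, precisely to the case $O\in\square_{C}$, and the polygons containing a corner are the main object of study: the split on the bottom vertex $P\in\{(-2,-5),(-3,-5),(-4,-5),(-5,-5)\}$, the subcases (I)--(IV), and the repeated use of $d(C,(1,-1))\geq 5$ together with relative minimality occupy the bulk of the proof. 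Moreover the bound is sharp --- e.g.\ the square with vertices $O$, $(-2,-5)$, $(-5,-5)$, $(-5,-3)$ attains $C^{2}=25$ --- so the surviving list is not short enough to be dismissed as ``immediate,'' and each configuration must actually be checked. Until that enumeration (or the paper's $q'=6$ argument via $C_{0}$ and the $q'=5$ analysis of $\square_{C}$) is carried out, the statement $C^{2}\geq 25$ is asserted rather than proved.
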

\begin{proof}
We take a curve $C_{0}$ as in Lemma \ref{ikkakuusagi}. By the same 
argument as that in the proof of Proposition \ref{k4}, we 
have $C_{0}^{2}\geq 25$ except for case (iii) in the proof of 
Proposition \ref{34}. Note that $q'\leq 6$, 
$c-e\geq q'-5$ and $a<b$ if $C_{0}^{2}\leq 24$. 
In the case $q'=6$, we obtain the two 
possibilities $(a,b,c,e)=(0,5,3,2),(0,5,4,3)$ by computing. In both 
cases, we have $C_{0}^{2}=24$. On the other hand, 
the relative minimality of $C$ implies that $\square_{C}$ is not equal 
to $\square_{C_{0}}$, which means that $C^{2}>C_{0}^{2}$. 

We next consider the case $q'=5$. If none of $O$, 
$(0,-5)$, $(-5,-5)$ and $(-5,0)$ is contained in $\square_{C}$, then 
also the eight points $(-1,0)$, $(0,-1)$, $(0,-5)$, 
$(-1,-5)$, $(-4,-5)$, $(-5,-4)$, $(-5,-1)$ and $(-4,0)$ are not 
contained in $\square_{C}$. Hence the left (resp. right) shape 
of $\square_{C}$ is one of three types 
in Fig. \ref{fig02} (I) (resp. (I\hspace{-0.3mm}I)). By 
%
\begin{figure}[h]\hspace{13mm}
\begin{picture}(450,80)
\linethickness{0.1mm}
\multiput(15,29)(0,10){6}{\line(1,0){36}}
\multiput(15,29)(10,0){4}{\line(0,1){50}}
\linethickness{0.4pt}
\multiput(15.3,48.8)(-0.23,0.23){4}{\line(1,1){30}}
\multiput(14.5,48.8)(0.23,0.23){4}{\line(1,-1){20}}
\put(14.8,49.2){\circle*{1}}
\put(11.8,76.9){\footnotesize $\times$}
\put(11.8,66.9){\footnotesize $\times$}
\put(21.8,76.9){\footnotesize $\times$}
\put(11.8,26.9){\footnotesize $\times$}
\put(21.8,26.9){\footnotesize $\times$}
\put(11.8,36.9){\footnotesize $\times$}
\linethickness{0.1mm}
\multiput(68,29)(0,10){6}{\line(1,0){36}}
\multiput(68,29)(10,0){4}{\line(0,1){50}}
\linethickness{0.4pt}
\multiput(68.3,58.8)(-0.23,0.23){4}{\line(1,1){20}}
\multiput(67.5,58.8)(0.23,0.23){4}{\line(1,-1){30}}
\put(67.8,59.2){\circle*{1}}
\put(64.8,76.9){\footnotesize $\times$}
\put(64.8,66.9){\footnotesize $\times$}
\put(74.8,76.9){\footnotesize $\times$}
\put(64.8,26.9){\footnotesize $\times$}
\put(74.8,26.9){\footnotesize $\times$}
\put(64.8,36.9){\footnotesize $\times$}
\linethickness{0.1mm}
\multiput(121,29)(0,10){6}{\line(1,0){27}}
\multiput(121,29)(10,0){3}{\line(0,1){50}}
\linethickness{0.4pt}
\multiput(121.3,58.8)(-0.23,0.23){4}{\line(1,1){20}}
\multiput(120.5,48.8)(0.23,0.23){4}{\line(1,-1){20}}
\multiput(120.5,49)(0.23,0){5}{\line(0,1){10}}
\put(117.8,76.9){\footnotesize $\times$}
\put(117.8,66.9){\footnotesize $\times$}
\put(127.8,76.9){\footnotesize $\times$}
\put(117.8,26.9){\footnotesize $\times$}
\put(127.8,26.9){\footnotesize $\times$}
\put(117.8,36.9){\footnotesize $\times$}
\put(77,14){(I)}
\linethickness{0.1mm}
\multiput(174,29)(0,10){6}{\line(1,0){36}}
\multiput(180,29)(10,0){4}{\line(0,1){50}}
\linethickness{0.4pt}
\multiput(209.5,48.95)(0.23,0.23){4}{\line(-1,1){30}}
\multiput(210.3,48.8)(-0.23,0.23){4}{\line(-1,-1){20}}
\put(209.95,49.3){\circle*{1}}
\put(196.8,76.9){\footnotesize $\times$}
\put(206.8,66.9){\footnotesize $\times$}
\put(206.8,76.9){\footnotesize $\times$}
\put(196.8,26.9){\footnotesize $\times$}
\put(206.8,26.9){\footnotesize $\times$}
\put(206.8,36.9){\footnotesize $\times$}
\linethickness{0.1mm}
\multiput(227,29)(0,10){6}{\line(1,0){36}}
\multiput(233,29)(10,0){4}{\line(0,1){50}}
\linethickness{0.4pt}
\multiput(262.5,58.95)(0.23,0.23){4}{\line(-1,1){20}}
\multiput(263.3,58.8)(-0.23,0.23){4}{\line(-1,-1){30}}
\put(262.95,59.2){\circle*{1}}
\put(249.8,76.9){\footnotesize $\times$}
\put(259.8,66.9){\footnotesize $\times$}
\put(259.8,76.9){\footnotesize $\times$}
\put(249.8,26.9){\footnotesize $\times$}
\put(259.8,26.9){\footnotesize $\times$}
\put(259.8,36.9){\footnotesize $\times$}
\linethickness{0.1mm}
\multiput(280,29)(0,10){6}{\line(1,0){27}}
\multiput(287,29)(10,0){3}{\line(0,1){50}}
\linethickness{0.4pt}
\multiput(306.5,58.95)(0.23,0.23){4}{\line(-1,1){20}}
\multiput(307.3,48.8)(-0.23,0.23){4}{\line(-1,-1){20}}
\multiput(306.2,49)(0.23,0){5}{\line(0,1){10}}
\put(293.8,76.9){\footnotesize $\times$}
\put(303.8,66.9){\footnotesize $\times$}
\put(303.8,76.9){\footnotesize $\times$}
\put(293.8,26.9){\footnotesize $\times$}
\put(303.8,26.9){\footnotesize $\times$}
\put(303.8,36.9){\footnotesize $\times$}
\put(230,14){(I\hspace{-0.3mm}I)}
\put(145,0.5){$\times$ :\,the point which is not contained 
in $\square_{C}$}

\end{picture}
\caption{}\label{fig02}
\end{figure}
%
noting the condition $d(C,(1,\pm 1))\geq 5$, we 
have $C^{2}\geq 25$ in any case. Therefore, considering the reflection, 
we see that it is sufficient to verify our lemma under the assumption 
$O\in \square_{C}$. Since the inequality $C^{2}\geq 25$ is obvious 
if $(0,-5)$ or $(-5,0)$ is contained in $\square_{C}$, we assume 
that $(0,-5),(-5,0)\notin \square_{C}$. It follows that also the four 
points $(0,-4)$, $(-1,-5)$, $(-5,-1)$ and $(-4,0)$ are not contained in 
$\square_{C}$. We denote by $P$ the vertex of $\square_{C}$ on 
$l(C,(0,-1))$ whose $z$-coordinate is maximal. 
Let us consider the case $P=(-2,-5)$. Then we 
have the four possibilities as in Fig. \ref{fig10}. 
%
\begin{figure}[h]\hspace{15mm}
\begin{picture}(450,90)
\linethickness{0.1mm}
\multiput(16,40)(0,10){6}{\line(1,0){50}}
\multiput(16,40)(10,0){6}{\line(0,1){50}}
\multiput(93,40)(0,10){6}{\line(1,0){50}}
\multiput(93,40)(10,0){6}{\line(0,1){50}}
\multiput(170,40)(0,10){6}{\line(1,0){50}}
\multiput(170,40)(10,0){6}{\line(0,1){50}}
\multiput(247,40)(0,10){6}{\line(1,0){50}}
\multiput(247,40)(10,0){6}{\line(0,1){50}}
\put(24,25){(I)}
\put(69.5,89){$O$}
\put(111,25){(I\hspace{-0.3mm}I)}
\put(185.3,25){(I\hspace{-0.4mm}I\hspace{-0.4mm}I)}
\put(262,25){(I\hspace{-0.4mm}V)}
\put(46,40){\circle*{3}}
\put(66,90){\circle*{3}}
\put(16,70){\circle*{3}}
\put(123,40){\circle*{3}}
\put(143,90){\circle*{3}}
\put(93,60){\circle*{3}}
\put(200,40){\circle*{3}}
\put(220,90){\circle*{3}}
\put(170,50){\circle*{3}}
\put(277,40){\circle*{3}}
\put(297,90){\circle*{3}}
\put(247,40){\circle*{3}}
\put(52.8,37.9){\footnotesize $\times$}
\put(62.8,37.9){\footnotesize $\times$}
\put(62.8,47.9){\footnotesize $\times$}
\put(12.8,87.9){\footnotesize $\times$}
\put(12.8,77.9){\footnotesize $\times$}
\put(22.8,87.9){\footnotesize $\times$}
\put(129.8,37.9){\footnotesize $\times$}
\put(139.8,37.9){\footnotesize $\times$}
\put(139.8,47.9){\footnotesize $\times$}
\put(89.8,87.9){\footnotesize $\times$}
\put(89.8,77.9){\footnotesize $\times$}
\put(99.8,87.9){\footnotesize $\times$}
\put(206.8,37.9){\footnotesize $\times$}
\put(216.8,37.9){\footnotesize $\times$}
\put(216.8,47.9){\footnotesize $\times$}
\put(166.8,87.9){\footnotesize $\times$}
\put(166.8,77.9){\footnotesize $\times$}
\put(176.8,87.9){\footnotesize $\times$}
\put(283.8,37.9){\footnotesize $\times$}
\put(293.8,37.9){\footnotesize $\times$}
\put(293.8,47.9){\footnotesize $\times$}
\put(243.8,87.9){\footnotesize $\times$}
\put(243.8,77.9){\footnotesize $\times$}
\put(253.8,87.9){\footnotesize $\times$}
\put(40.5,28.5){$P$}
\put(141,13){\circle*{3}}
\put(147,11){:\,the point which is contained in $\square_{C}$}
\put(136,0){$\times$ :\,the point which is not contained in $\square_{C}$}
\end{picture}
\caption{}\label{fig10}
\end{figure}
%
In case (I), by the relative minimality, we see that there exist 
integers $m_{1}$ and $m_{2}$ with $-3\leq m_{1},m_{2}\leq -1$ such 
that $(0,m_{1})$ and $(m_{2},0)$ is contained in $\square_{C}$. An
easy computation gives $C^{2}\geq 25$ in any case. Consider 
case (I\hspace{-0.3mm}I). 
By the relative minimality, we see that either $(0,-1)$ or $(0,-3)$ is 
contained in $\square_{C}$, and likewise either $(-3,-5)$ or $(-5,-5)$ 
is contained in $\square_{C}$. Then it is obvious that the minimum value 
of $C^{2}$ is attained when the lower shape of $\square_{C}$ is the 
polygonal line connecting $O$, $(0,-1)$, $(-2,-5)$, $(-3,-5)$ 
and $(-5,-3)$. Then we see that 
$C^{2}$ achieves its minimum 25 when the upper shape of $\square_{C}$ is 
the polygonal line connecting $(-5,-3)$, $(-4,-2)$ and $O$. 
Consider cases (I\hspace{-0.4mm}I\hspace{-0.4mm}I) and 
(I\hspace{-0.4mm}V). We note that the point $(-5,-5)$ is contained 
in $\square_{C}$ in case (I\hspace{-0.4mm}I\hspace{-0.4mm}I) also. 
By the condition $d(C,(1,-1))\geq 5$, 
$\square_{C}$ has a lattice point in the domain $A$ in 
Fig. \ref{fig11}. 
%
\begin{figure}[h]\hspace{-11mm}
\begin{picture}(450,51)
\glay
\multiput(198.2,20)(0.48,0){63}{\line(0,1){30}}
\color{white}
\multiput(197.9,30)(-0.45,0){44}{\line(1,1){20}}
\multiput(198,20)(0.45,0){67}{\line(1,1){30}}
\color{black}
\linethickness{0.1mm}
\multiput(198,0)(0,10){6}{\line(1,0){50}}
\multiput(198,0)(10,0){6}{\line(0,1){50}}
\linethickness{0.4pt}
\multiput(198.1,29.95)(-0.1,0.1){2}{\line(1,1){19.8}}
\multiput(198.1,19.95)(-0.1,0.1){2}{\line(1,1){29.8}}
\put(228,0){\circle*{3}}
\put(248,50){\circle*{3}}
\put(198,0){\circle*{3}}
\put(251.5,47){$O$}
\put(179.5,23.5){$A$}
\put(190,27){\vector(1,0){10}}
\put(194.8,47.9){\footnotesize $\times$}
\put(194.8,37.9){\footnotesize $\times$}
\put(204.8,47.9){\footnotesize $\times$}
\put(234.8,-2.1){\footnotesize $\times$}
\put(244.8,-2.1){\footnotesize $\times$}
\put(244.8,7.9){\footnotesize $\times$}
%
%
\end{picture}
\caption{}\label{fig11}
\end{figure}
%
When $\square_{C}$ is a square with vertices $O$, $(-2,-5)$, $(-5,-5)$ 
and $Q$, the self-intersection number $C^{2}$ achieves its 
minimum 25, where $Q$ is either $(-5,-3)$, $(-4,-2)$, $(-3,-1)$ or 
$(-2,0)$. In the case $P=(-3,-5)$, we assume 
that $(-5,-2)$ is not contained in $\square_{C}$ in order to avoid the 
duplication. By the relative minimality and the 
condition $d(C,(1,-1))\geq 5$, we see that $l(C,(1,-1))$ passes 
through $P$ and $(-3,0)$ is contained in $\square_{C}$. Then, since 
the upper shape of $\square_{C}$ must be the polygonal line connecting 
$(-5,-4)$, $(-3,0)$ and $O$, we obtain $C^{2}\geq 25$. 
We next consider the case $P=(-4,-5)$. By the relative minimality and 
the condition $d(C,(1,-1))\geq 5$, we see 
that $(-5,-5)$ is contained in $\square_{C}$, and moreover, 
either $(0,-2)$ or $(0,-3)$ is contained in $\square_{C}$. Then, 
considering the reflection and the rotation, this case can be reduced 
to the case where $P=(-2,-5)$ or $(-3,-5)$. The same argument goes 
through for the case $P=(-5,-5)$. 
\end{proof}
%
%
%
\section{Proof of the main theorem}\label{doruido}

To prove Theorem \ref{mthm}, we first aim to show that any 
gonality pencil on $C$ can be extended to a morphism from $S$. 
Let us prove several lemmas needed later. Also in this section, we use 
the notion of coprime in the wide sense (see Definition \ref{youganmajin}). 
\begin{lemma}\label{romalia}
Let $C$ be a curve as in Theorem \ref{mthm}. Assume that $(S,C)$ is 
relatively minimal and $q\geq 3$. Let $V$ be an effective divisor on $S$. 
\vspace{-2mm}
\begin{itemize}
\setlength{\itemsep}{-2pt}
\item[{\rm (i)}]\hspace{-1mm}If $h^{0}(S,V)\geq 2$ and 
$h^{0}(S,V+K_{S})\geq 1$, then $C.V\geq q+2$. 
\item[{\rm (ii)}]\hspace{-1mm}If $h^{0}(S,V)\geq 2$ and 
$h^{0}(S,V+K_{S})\geq 2$, then $C.V\geq q+3$. 
\item[{\rm (iii)}]\hspace{-1mm}If $h^{0}(S,V)\geq 3$ and 
$h^{0}(S,V+K_{S})\geq 1$, then $C.V\geq q+3$. 
\end{itemize}
\end{lemma}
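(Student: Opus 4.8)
The plan is to translate the statement into the combinatorics of the two lattice polygons $\square_{C}$ and $\square_{V}$. Since $h^{0}(S,V)$, $h^{0}(S,V+K_{S})$ and $C.V$ depend only on the class of $V$, I would pass to the $T$-invariant representative $V=\sum_{i}v_{i}D_{i}$, so that $\square_{V}$ is an honest lattice polygon with $h^{0}(S,V)=\#(\square_{V}\cap\mathbb{Z}^{2})$ and $h^{0}(S,V+K_{S})$ equal to the number of \emph{interior} lattice points of $\square_{V}$. In particular $h^{0}(S,V+K_{S})\geq 1$ forces $\square_{V}$ to be two-dimensional with an interior lattice point $m$; replacing $V$ by $V-\mathrm{div}(\chi^{m})$ (which just translates $\square_{V}$ so that $m$ lands at the origin) I may assume $0\in\mathrm{int}\,\square_{V}$. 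Then for each $i$ one has $v_{i}\geq\max_{p\in\square_{V}}(x_{i}p_{1}+y_{i}p_{2})\geq 1$, the last step because the maximum is attained at a lattice vertex and is strictly positive. Hence $V-\sum_{i}D_{i}$ is effective, and since $C$ is nef,
\begin{equation*}
C.V\ \geq\ C.\textstyle\sum_{i}D_{i}\ =\ \sum_{i}C.D_{i}\ =\ P(\square_{C}),
\end{equation*}
the lattice perimeter of $\square_{C}$.

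This already settles (i) and (iii) once $P(\square_{C})\geq q+2$, resp.\ $P(\square_{C})\geq q+3$, is known. For that I would combine Proposition \ref{34}, which gives $C^{2}=2\,\mathrm{area}(\square_{C})\geq\tfrac34 q^{2}$, with the elementary inequality $8\,\mathrm{area}(\square_{C})\leq P(\square_{C})^{2}$ valid for every convex lattice polygon; these yield $P(\square_{C})\geq\sqrt{3}\,q$, which exceeds $q+2$ as soon as $q\geq 3$ and exceeds $q+3$ as soon as $q\geq 5$. The leftover values $q\in\{3,4\}$ for part (iii) are treated with the sharper self-intersection bounds: $C^{2}\geq 18$ when $q=3$ (Proposition \ref{k3}) and the explicit list of admissible shapes of $\square_{C}$ when $q=4$ (Proposition \ref{k4}), reading off $P(\square_{C})\geq q+3$ in each case, relative minimality being what rules out the borderline polygons.

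For (ii) the same idea pushed one notch further gives a cleaner argument. Choosing two distinct interior lattice points $m\neq m'$ of $\square_{V}$ and translating so that $m=0$, the fact that $m'$ is also interior gives $v_{i}\geq\max\{1,\ x_{i}m'_{1}+y_{i}m'_{2}+1\}$ for every $i$. Writing $m'=\ell(a,b)$ with $(a,b)$ primitive and $\ell\geq 1$, and applying the formula of Remark \ref{kyatapira} in the direction orthogonal to $(a,b)$, I get
\begin{equation*}
C.V\ \geq\ \sum_{i}C.D_{i}\ +\ \sum_{x_{i}a+y_{i}b>0}(x_{i}a+y_{i}b)\,C.D_{i}\ \geq\ P(\square_{C})\ +\ \ell\,d(C,(-b,a))\ \geq\ P(\square_{C})+q.
\end{equation*}
Since $\square_{C}$ is two-dimensional (its number of interior lattice points is the genus $g\geq 2$), $P(\square_{C})\geq 3$, so $C.V\geq q+3$; incidentally this also re-proves (iii) whenever $\square_{V}$ has more than one interior lattice point.

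The real obstacle is the very last point of the second paragraph: forcing $P(\square_{C})\geq q+3$ for the small values $q=3,4$, that is, pinning down the shape of $\square_{C}$ accurately enough. This is precisely what the long, elementary polygon estimates of Section \ref{lbd} are for — one must run through the finitely many relatively minimal $\square_{C}$ with $C^{2}$ small, using Propositions \ref{k3}, \ref{k4} and the normalisations established in Lemma \ref{ikkakuusagi}, and verify the perimeter bound case by case. The divisor-theoretic part of the proof is short; the case analysis for small $q$ is where the work actually sits.
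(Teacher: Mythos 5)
Your toric dictionary is set up correctly, and your argument for part (ii) is essentially sound: with two interior lattice points you get $v_{i}\geq 1$ for all $i$ plus $v_{i}\geq \ell(x_{i}a+y_{i}b)+1$ in the positive half, hence $C.V\geq \sum_{i}C.D_{i}+\ell\,d(C,(-b,a))\geq 3+q$; this is a clean, self-contained alternative to the paper, whose proof of (ii) instead leans on (i). The trouble is with (i) and (iii), where you throw away the direction of the interior point and reduce everything to a bound on the boundary lattice point count $P(\square_{C})=\sum_{i}C.D_{i}$.

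First, the inequality $8\,\mathrm{area}(\square_{C})\leq P(\square_{C})^{2}$ is false for lattice perimeter (it is an isoperimetric inequality for Euclidean perimeter, and lattice length can be much shorter): the triangle $\mathrm{conv}\{(0,0),(3,1),(1,3)\}$ has area $4$ and $P=4$, and even the paper's own triangle in Fig.~\ref{fig29} has area $27/2$ and $P=9$, so $8A=108>81=P^{2}$ and $P=9<\sqrt{3}\,q\approx 10.4$. So the derivation $P\geq\sqrt{3}\,q$ collapses, and with it your proof of (i) for all $q$ and of (iii) for $q\geq 5$. Second, and more fundamentally, the reduction itself cannot give (iii): the relatively minimal pair with $\square_{C}=\mathrm{conv}\{(0,0),(-4,-2),(-2,-4)\}$ (the $(g,q)=(4,4)$ triangle of Lemma~\ref{g4} and Section~\ref{exam}) satisfies every hypothesis of Lemma~\ref{romalia} ($q=4\geq 3$, $g=4$, not a plane curve), yet has $P(\square_{C})=6=q+2<q+3$; so no estimate of the form $C.V\geq P(\square_{C})$ can yield (iii) when $\square_{V}$ has a single interior lattice point, and "relative minimality rules out the borderline polygons" is exactly wrong --- the borderline polygon is admissible. (Propositions~\ref{k3} and \ref{k4} bound $C^{2}$, not the boundary count, so they do not rescue the small-$q$ cases either, and the needed bound $P\geq q+2$ for (i) is proved nowhere in the paper.) The paper avoids all of this by keeping the directional information: translating a lattice point of $\square_{V}$ to the origin and using the interior point $(z,w)$, it gets $C.V\geq d(C,(-w,z))+\sum_{\sigma(D_{i})\subset A_{1}}C.D_{i}\geq q+\sum_{\sigma(D_{i})\subset A_{1}}C.D_{i}$, i.e.\ width in one direction plus a \emph{half}-fan boundary sum, and the genuine content of (i) is the delicate fan argument (the ``nonexistence condition'', using relative minimality and $d\geq q'$) showing that this partial sum is at least $2$; (iii) is then handled by a further analysis of the three lattice points of $\square_{V}$, again not by a perimeter bound. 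That half-fan estimate is the key idea your proposal is missing.
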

\begin{proof}
We write $V=\sum_{i=1}^{d}n_{i}D_{i}$ with non-negative integer 
coefficients. We denote by $\sigma (D_{d_{0}})$ the cone 
in $\Delta_{S}$ whose primitive element is $(0,-1)$. \\
(i) By assumption, we can assume that the origin $O$ is contained in 
$\square_{V}$ and there exists another lattice point $P=(z,w)$ contained 
in the interior of $\square_{V}$. Without loss of generality, we can 
assume that $z\geq 0$, $w\leq 0$ and $(z,-w)=1$. 
We denote by $A_{1}$ the domain drawn in Fig. \ref{fig06} (I). 
%
\begin{figure}[h]\hspace{1mm}
\begin{picture}(450,130)
\linethickness{0.7mm}
\glay
\multiput(16.7,13)(2.02,0){36}{\line(0,1){109.2}}
\color{white}
\multiput(15,33.5)(3.3,4.4){21}{\circle*{10}}
\multiput(15,47)(3.3,4.4){18}{\circle*{10}}
\linethickness{14mm}
\multiput(15,73)(0,32){2}{\line(1,0){20}}
\put(34,101){\line(1,0){23}}
\put(34,113){\line(1,0){32}}
\linethickness{0.4pt}
\color{black}
\multiput(50.1,13)(0.52,0){4}{\line(0,1){115}}
\multiput(11,27)(0.4,-0.3){4}{\line(3,4){73}}
\put(78,115){\circle*{6}}
\put(51,78.7){\circle*{7}}
\put(84,110){$(-w,z)$}
\put(35,77){$O$}
\put(57,51){$A_{1}$}
\put(44.5,0){(I)}
\linethickness{0.7mm}
\glay
\multiput(166.5,13)(2,0){35}{\line(0,1){109}}
\color{white}
\multiput(165.5,20)(-0.2,0){3}{\line(1,2){30}}
\multiput(162,24)(2.2,4.4){14}{\circle*{10}}
\multiput(165,45)(2.2,4.4){9}{\circle*{10}}
\multiput(165,60)(2.2,4.4){5}{\circle*{10}}
\multiput(197,80)(0,0.2){3}{\line(1,1){40}}
\multiput(197,87)(3,3){13}{\circle*{10}}
\multiput(189,90)(3,3){12}{\circle*{10}}
\linethickness{14mm}
\multiput(165,90)(0,15){2}{\line(1,0){26}}
\put(189,115){\line(1,0){26}}
\linethickness{0.4pt}
\color{black}
\multiput(190.1,13)(0.52,0){4}{\line(0,1){101}}
\multiput(163,43)(0.4,-0.3){4}{\line(3,4){60}}
\linethickness{0.7mm}
\qbezier[12](190.5,77)(210.5,97)(230.5,117)
\qbezier[15](190.5,77)(176,48)(161.5,19)
\put(218,115){\circle*{6}}
\put(191,78.7){\circle*{7}}
\put(211,97){\circle*{6}}
\put(169.2,34){\circle*{6}}
\put(181,118){$(-w,z)$}
\put(105,32){$(-1,-m-1)$}
\put(202,51){$A_{2}$}
\put(217,91){$(1,m)$}
\put(182.5,0){(I\hspace{-0.3mm}I)}
\linethickness{0.7mm}
\glay
\multiput(270,13)(2,0){25}{\line(0,1){63}}
\color{white}
\multiput(270,29)(-0.48,0){3}{\line(1,1){49}}
\multiput(268,34)(3.2,3.2){15}{\circle*{10}}
\multiput(268,46)(3.2,3.2){11}{\circle*{10}}
\multiput(291.5,12)(2.6,5.2){12}{\circle*{10}}
\linethickness{8mm}
\put(268,65){\line(1,0){26}}
\put(297,24){\line(1,0){23}}
\put(308,44){\line(1,0){10}}
\linethickness{0.4pt}
\color{black}
\multiput(315.1,13)(0.52,0){4}{\line(0,1){115}}
\multiput(315,78)(0.32,-0.2){5}{\line(-1,-2){31.8}}
\linethickness{0.7mm}
\qbezier[16](315.5,78)(290.5,53)(265.5,28)
\put(294,34){\circle*{6}}
\put(316,78.7){\circle*{7}}
\put(294,56){\circle*{6}}
\put(251,61){$(-1,-m)$}
\color{white}
\multiput(315,26)(0.4,0){5}{\line(0,1){15}}
\color{black}
\put(300,30){$(-1,-m-1)$}
\put(270,20){$A_{3}$}
\put(306,0){(I\hspace{-0.3mm}I\hspace{-0.3mm}I)}
\end{picture}
\caption{}\label{fig06}
\end{figure}
%
Since $P$ is contained in the interior of $\square_{V}$, 
the inequality $x_{i}z+y_{i}w<n_{i}$ holds for any $(x_{i},y_{i})
\in A_{1}\cap {\rm Pr}(S)$. 
We thus obtain 
%
\begin{eqnarray}\label{ishisu}
\begin{array}{ll}
C.V &\!\!\! \displaystyle =
\sum_{i=1}^{d}n_{i}C.D_{i}
\geq \sum_{\sigma(D_{i})\subset A_{1}}
(x_{i}z+y_{i}w+1)C.D_{i}\\[5mm]
 &\!\!\! \displaystyle =
d(C,(-w,z))+\sum_{\sigma(D_{i})\subset A_{1}}C.D_{i}\geq 
q+\sum_{\sigma(D_{i})\subset A_{1}}C.D_{i}.
\end{array}
\end{eqnarray}
Thus it is sufficient to verify $\sum_{\sigma(D_{i})\subset 
A_{1}}C.D_{i}\geq 2$. This inequality 
is true if there exists a cone $\sigma(D_{i})\subset A_{1}$ such that 
$D_{i}^{2}=-1$. Hence we suppose that there does not exist such a cone 
(we call this the `{\it nonexistence condition}') and 
$\sum_{\sigma(D_{i})\subset A_{1}}C.D_{i}=1$. We can take a 
cone $\sigma(D_{i_{0}})\subset A_{1}$ such that 
\begin{eqnarray*}
C.D_{i}=\left\{
\begin{array}{ll}
1 & (i=i_{0}),\\
0 & (i\neq i_{0},\,\sigma(D_{i})\subset A_{1}).
\end{array}
\right.
\end{eqnarray*}
If there exists only one cone $\sigma(D_{j})$ included in 
$A_{1}\setminus {\mathbb R}(-w,z)$, then 
$d(C,(x_{j-1},y_{j-1}))$ is equal to one, a contradiction. 
We thus have $N=\sharp \{\sigma(D_{i})\in \Delta_{S}\mid 
\sigma(D_{i})\subset A_{1}\setminus {\mathbb R}(-w,z)\}\geq 2$. 
Then, by the nonexistence condition, we deduce that 
neither $z$ nor $w$ is equal to zero. We denote by $m$ the maximal 
integer satisfying $z+mw\geq 0$. By the nonexistence condition, there 
does not exist a cone included in the domain $A_{2}$ except 
for $\sigma(D_{d_{0}})$ (see Fig. \ref{fig06} (I\hspace{-0.3mm}I)). 
On the other hand, 
since $N\geq 2$, there exists a cone $\sigma(D_{l})\subset A_{1}$ such 
that $x_{l}\neq 0$. Consider the case where $x_{l}$ is positive. 
Since $\sigma(D_{l})\subset A_{1}\setminus A_{2}$, 
we have $(1,m)\in {\rm Pr}(S)$. Thus, it follows from 
$D_{d_{0}}^{2}\neq -1$ that there does not exist a cone in 
the domain $A_{3}$ (see Fig. \ref{fig06} 
(I\hspace{-0.3mm}I\hspace{-0.3mm}I)). We deduce that 
$$\{(x_{j},y_{j})\in M(x_{i_{0}-1},y_{i_{0}-1})
\mid C.D_{j}\geq 1\,\}=\{(x_{i_{0}},y_{i_{0}})\},$$
which implies the contradiction $d(C,(x_{i_{0}-1},y_{i_{0}-1}))=1$. 
In the case where $x_{l}$ is negative, one can 
obtain a similar contradiction. \\
(ii) In this case, we can assume that $\square_{V}$ has two 
distinct lattice points $(0,0)$ and $(z,w)$ in its interior, 
where $z\geq 0$, $w\leq 0$ and $(z,-w)=1$. As we saw in (i), 
the inequality $\sum_{\sigma(D_{i})\subset A_{1}}n_{i}C.D_{i}\geq q+2$ 
holds. On the other hand, since $(0,0)$ is contained in the interior of 
$\square_{V}$, the 
coefficient $n_{i}$ is positive for every $T$-invariant 
divisor $D_{i}$ ($i=1,\ldots ,d$). 
If $C.D_{i}=0$ for any $\sigma(D_{i})\not \subset A_{1}$, 
then we have $d(C,(-w,z))=0$, a contradiction. We thus have 
$C.V\geq q+2+
\sum_{\sigma(D_{i})\not \subset A_{1}}n_{i}C.D_{i}\geq q+3$. \\
(iii) In this case, there exist three distinct lattice points $(0,0)$, 
$(z,w)$ and $(z',w')$ in $\square_{V}$, especially $(z,w)$ is contained 
in the interior of $\square_{V}$. We can assume that $z\geq 0$, 
$w\leq 0$, $(z,-w)=1$ and $(|z'|,|w'|)=1$. 
Suppose that $C.V=q+2$, and denote by $i_{1}$ (resp. $i_{2}$) the 
minimal (resp. maximal) integer in $\{i\mid \sigma(D_{i})\subset 
A_{1},\,C.D_{i}\geq 1\}$. By a computation similar to that in 
(\ref{ishisu}), we obtain 
$\sum_{\sigma(D_{i})\subset A_{1}}C.D_{i}=2$ and 
$\sum_{\sigma(D_{i})\not \subset A_{1}}n_{i}C.D_{i}=0$. It 
follows that $C.D_{i}=0$ for $\sigma(D_{i})\subset A_{1}$ except 
for $i=i_{1},\,i_{2}$. Moreover, we see 
that $n_{i_{1}}=n_{i_{2}}=C.D_{i_{1}}=C.D_{i_{2}}=1$ (resp. 
$n_{i_{1}}=1$ and $C.D_{i_{1}}=2$) if 
$i_{1}\neq i_{2}$ (resp. $i_{1}=i_{2}$). Let us consider the case where 
$zw'-wz'\geq 0$. 
Let $\sigma(D_{j})$ be a cone included in the domain $B_{1}$ drawn 
in Fig. \ref{fig07}. 
%
\begin{figure}[h]\hspace{-10mm}
\begin{picture}(450,107)
\linethickness{0.7mm}
\glay
\multiput(142,33)(2,0){22}{\line(0,1){73}}
\color{white}
\multiput(142,35.5)(-0.3,0.1){5}{\line(1,4){18}}
\multiput(138,42)(1.5,6){12}{\circle*{10}}
\multiput(138,77)(1.5,6){6}{\circle*{10}}
\multiput(143,32)(0.4,-0.1){5}{\line(3,4){42}}
\multiput(149.5,32)(2.7,3.6){15}{\circle*{10}}
\multiput(161,35)(3,4){9}{\circle*{10}}
\linethickness{9.5mm}
\put(165,45.8){\line(1,0){20}}
\color{black}
\linethickness{0.7mm}
\qbezier[17](141,30.5)(150.25,67.5)(159.5,104.5)
\qbezier[26](120.2,1.6)(152.6,44.8)(185,88)
\linethickness{0.4pt}
\multiput(140.1,1)(0.52,0){4}{\line(0,1){106}}
\put(154.7,85){\circle*{6}}
\put(176.5,76.5){\circle*{6}}
\put(141,30.5){\circle*{7}}
\color{white}
\multiput(139,82)(0.4,0){11}{\line(0,1){15}}
\color{black}
\put(181,69){$(-w,z)$}
\put(125,29){$O$}
\put(163.5,85){$B_{1}$}
\put(112,86){$(-w',z')$}
\linethickness{0.7mm}
\glay
\multiput(271,0)(2,0){28}{\line(0,1){107}}
\color{white}
\multiput(270,-13)(-0.37,0.08){5}{\line(1,4){30}}
\multiput(266.8,-3)(1.5,6){19}{\circle*{10}}
\multiput(266.5,32)(1.5,6){14}{\circle*{10}}
\linethickness{6mm}
\put(275,55){\line(0,1){52}}
\color{black}
\linethickness{0.7mm}
\qbezier[25](274,0.5)(286.9,53.15)(299.8,105.8)
\qbezier[26](258.6,1.5)(291.8,44.25)(325,87)
\linethickness{0.4pt}
\multiput(280.1,0)(0.52,0){4}{\line(0,1){107}}
\put(294.7,85){\circle*{6}}
\put(317,76.5){\circle*{6}}
\put(281,30.5){\circle*{7}}
\color{white}
\multiput(279,82)(0.4,0){11}{\line(0,1){15}}
\color{black}
\put(265,29){$O$}
\put(298.2,29.5){$B_{2}$}
\put(322,69){$(-w,z)$}
\put(252,86){$(-w',z')$}
\end{picture}
\caption{}\label{fig07}
\end{figure}
%
Since $(z',w')$ is contained in $L(V,(x_{j},y_{j}))$, we have the 
inequalities $n_{j}\geq x_{j}z'+y_{j}w'>0$. Hence $C.D_{j}=0$. 
Noting that $(z',w')\in L(V,(x_{i_{1}},y_{i_{1}}))\cap 
L(V,(x_{i_{2}},y_{i_{2}}))$, we have 
\begin{eqnarray*}
\begin{array}{ll}
 &\!\!\! d(C,(-w',z'))=\displaystyle 
\sum_{\sigma(D_{i})\subset B_{2}}(x_{i}z'+y_{i}w')C.D_{i}\\[5.5mm]
= &\!\!\! \left\{\begin{array}{ll}
(w'y_{i_{1}}+z'x_{i_{1}})C.D_{i_{1}}+
(w'y_{i_{2}}+z'x_{i_{2}})C.D_{i_{2}}\leq n_{i_{1}}C.D_{i_{1}}+
n_{i_{2}}C.D_{i_{2}}=2
 & (i_{1}\neq i_{2}),\\
(w'y_{i_{1}}+z'x_{i_{1}})C.D_{i_{1}}\leq n_{i_{1}}C.D_{i_{1}}=2
 & (i_{1}=i_{2}).
\end{array}\right.
\end{array}
\end{eqnarray*}
This contradicts the assumption $q\geq 3$. A similar argument can be 
carried out for the case where $zw'-wz'\leq 0$. 
\end{proof}
We are now in a position to show the extension of a gonality pencil. 
In the proof, the following Serrano's result plays an essential role. 
\begin{theorem}[\cite{ser}]\label{serrano}
Let $X$ be a smooth curve on a smooth surface $Y$, and 
$f:X\rightarrow {\mathbb P}^{1}$ a surjective morphism of degree $p$. 
Assume that $X^{2}>4p$. If there does not exist an effective 
divisor $V$ on $Y$ satisfying the following properties $({\tt a})$ and 
$({\tt b})$, then there exists a morphism from $Y$ to $\mathbb P^{1}$ 
whose restriction to X is $f$. 
\\[1.8mm]
$\begin{array}{ll}
\,{\rm ({\tt a})}\ 1\leq V^{2}<(X-V).V\leq p, & 
{\rm ({\tt b})}\ \displaystyle X^{2}\leq 
\frac{(p+V^{2})^{2}}{V^{2}}
\end{array}$.
\end{theorem}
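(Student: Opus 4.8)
Theorem~\ref{serrano} is the extension criterion of Serrano \cite{ser}; in this paper it is invoked as a black box, so I only describe the strategy one would follow. The plan is to argue by contraposition: assuming that $f$ does \emph{not} extend to a morphism $Y\to\mathbb P^{1}$, one manufactures an effective divisor $V$ satisfying $({\tt a})$ and $({\tt b})$. The first move is to restate the datum: a surjection $f\colon X\to\mathbb P^{1}$ of degree $p$ is the same thing as a base-point-free pencil $(W,L)$ on $X$, with $L\in\mathrm{Pic}(X)$ of degree $p$ and $W\subset H^{0}(X,L)$ a two-dimensional base-point-free subspace, and extending $f$ means producing a line bundle $M$ on $Y$ with $M|_{X}\cong L$ together with a two-dimensional $W'\subset H^{0}(Y,M)$ restricting isomorphically onto $W$, in such a way that, after discarding its fixed part and resolving its base points, the pencil $|W'|$ on $Y$ defines a fibration with connected fibres.

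The technical core is to produce such $M$ and $W'$. If $L$ extends to a line bundle $M$ on $Y$ with $H^{1}(Y,\mathcal O_{Y}(M-X))=0$, the sections in $W$ lift to $H^{0}(Y,M)$ through the exact sequence
\[
0\longrightarrow \mathcal O_{Y}(M-X)\longrightarrow \mathcal O_{Y}(M)\longrightarrow L\longrightarrow 0 .
\]
The point of a Reider-type argument is to realise this automatically: from the evaluation of $W$ one forms a rank-two sheaf $E$ on $Y$ with $c_{2}(E)=p$ and $c_{1}(E)$ numerically equal to $X$ up to a fixed twist, and then $X^{2}>4p$ is exactly the Bogomolov inequality $c_{1}(E)^{2}>4c_{2}(E)$, which forces $E$ to be unstable; a maximal destabilising sub-line-bundle $\mathcal O_{Y}(A)$ is either compatible with the pencil — in which case $E$ splits off a line bundle that serves as $M$, one reads off $W'$ from the splitting, and $f$ extends — or else produces $V=c_{1}(E)-A$ (modulo the twist). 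In the latter case one checks that $V$ is effective and that $(X-V).V\le c_{2}(E)=p$, $V^{2}\ge 1$ and $V^{2}<(X-V).V$, i.e.\ $({\tt a})$; and $({\tt b})$ then comes for free, because $X^{2}>0$ and the Hodge index theorem give $X^{2}V^{2}\le(X.V)^{2}$, while $X.V=V^{2}+(X-V).V\le V^{2}+p$ rewrites this as $X^{2}\le(p+V^{2})^{2}/V^{2}$. Contrapositively, the nonexistence of a divisor satisfying $({\tt a})$ (hence also $({\tt b})$) forces the destabilisation to be compatible with the pencil, and $f$ extends.

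The step I expect to be the real obstacle is bridging ``a linear pencil on $Y$'' and ``an honest morphism $Y\to\mathbb P^{1}$ restricting to $f$'': one must handle the fixed part of $|W'|$, resolve its base points on a suitable blow-up, pass to the Stein factorisation of the resulting fibration, and rule out both that it factors $f$ through a nontrivial cover of $\mathbb P^{1}$ and that some component vertical or horizontal for the pencil is incompatible with $f$ along $X$. This is where $X^{2}>4p$ does its work a second time: it bounds the intersection with $X$ of any such offending component against $p=\deg f$, so that every possible failure feeds back into a divisor of the prohibited type $({\tt a})$. Making this bookkeeping precise — tracking the base points of $|W'|$ along $X$, applying adjunction on $Y$ after each blow-up, and comparing intersection numbers throughout — is the delicate, genuinely two-dimensional part of Serrano's argument.
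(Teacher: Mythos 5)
The paper does not prove this statement at all: Theorem~\ref{serrano} is quoted verbatim from Serrano's article \cite{ser} and is used later (in Proposition~\ref{kandata} and in the proof of Theorem~\ref{mthm}(ii)) strictly as a black box, so there is no internal proof to compare yours against. Judged on its own terms, your outline follows the standard vector-bundle route (the rank-two sheaf $E$ obtained from the evaluation map of the pencil, with $c_1(E)$ numerically $X$ and $c_2(E)=p$, Bogomolov instability from $X^2>4p$, and the destabilising filtration producing the divisor $V$), which is the argument one finds in later treatments of Serrano-type extension theorems rather than necessarily Serrano's original one; your observation that condition $({\tt b})$ is a formal consequence of $({\tt a})$ together with $X^2>0$ via the Hodge index inequality $X^2V^2\leq (X.V)^2$ and $X.V=V^2+(X-V).V\leq V^2+p$ is correct and is indeed how $({\tt b})$ arises.

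However, as submitted this is a strategy sketch, not a proof. The two decisive steps are exactly the ones you defer: (1) in the ``compatible'' branch of the destabilisation you must actually produce a line bundle $M$ on $Y$ with a two-dimensional space of sections restricting onto the pencil $W$, and verify effectivity of $V$ and the full string of inequalities $1\leq V^2<(X-V).V\leq p$ in the other branch (in particular $V^2\geq 1$ and the strict inequality do not follow from Bogomolov instability alone without further argument); and (2) the passage from a two-dimensional linear system on $Y$ restricting to $f$ to an honest morphism $Y\rightarrow\mathbb P^1$ whose restriction is $f$ — eliminating fixed components and base points, which you yourself flag as ``the real obstacle.'' Since this elimination is precisely where the hypothesis on divisors of type $({\tt a})$ is consumed, leaving it open means the proposal does not yet establish the theorem; it is an accurate roadmap to a known proof, with the hard core unexecuted.
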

\begin{proposition}\label{kandata}
Let $C$ be a curve as in Theorem \ref{mthm}, and $f$ a gonality 
pencil on $C$. If $(g,q)\neq (4,4),(5,4),(10,6)$, then there exists 
a morphism from $S$ to $\mathbb P^{1}$ whose restriction to $C$ is $f$. 
\end{proposition}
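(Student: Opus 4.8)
The plan is to apply Serrano's theorem (Theorem~\ref{serrano}) with $Y=S$, $X=C$, $f$ the given gonality pencil, and $p=k=\mathrm{gon}(C)$. To do so I must verify the two hypotheses: first that $C^{2}>4k$, and second that no effective divisor $V$ on $S$ satisfies both conditions $(\mathtt{a})$ and $(\mathtt{b})$. Throughout I may assume $(S,C)$ is relatively minimal, since blowing down $(-1)$-curves meeting $C$ in at most one point does not change the gonality, the genus, or the existence/non-existence of an extension of $f$; and I note that $k\le q$ always holds because the restriction of a toric fibration realizing the lattice width gives a $g^{1}_{q}$ on $C$, so $k\le q$, hence $4k\le 4q$.

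First I would dispose of the numerical hypothesis $C^{2}>4k$. By Corollary~\ref{zoma} we have $C^{2}\ge \tfrac34 k^{2}$, which already gives $C^{2}>4k$ as soon as $k\ge 6$. For the small values $k\le q\le 5$ (and the remaining case $q\le 5$ after excluding $(g,q)=(4,4),(5,4)$) I would invoke the sharper bounds from Propositions~\ref{k3}, \ref{k4} and \ref{k5}: for $q=2$ one gets $C^{2}\ge 12>8$; for $q=3$, $C^{2}\ge 18>12$; for $q=5$, $C^{2}\ge 25>20$; and for $q=4$ either $C^{2}\ge 20>16$ or $C^{2}\in\{12,16\}$ with $\square_C$ on the explicit list of Proposition~\ref{k4}. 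Since $(g,q)=(4,4),(5,4)$ are excluded and the genus of a curve with a given lattice polygon is the number of interior lattice points, I would check directly for each of the six polygons in Fig.~\ref{fig35} that the genus forces $k=q=4$ (not $3$) and $C^{2}>16=4k$, or otherwise that the polygon has too few interior points to give $g\ge 2$; this finitely many cases handles $q=4$ completely. The upshot is $C^{2}>4k$ in all remaining cases.

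Next, and this is the heart of the matter, I must show there is no effective $V$ with $1\le V^{2}<(C-V).V\le k$ and $C^{2}\le (k+V^{2})^{2}/V^{2}$. The strategy is to combine the constraints: from $(\mathtt{b})$, writing $v=V^{2}$ and using $C^{2}\ge\tfrac34 q^{2}$ (and $k\le q$), we get $\tfrac34 q^{2}\le (k+v)^{2}/v\le (q+v)^{2}/v$, which bounds $v$ into a narrow range depending on $q$; roughly $v$ must be comparable to $q$. Meanwhile condition $(\mathtt a)$ says $(C-V).V\le k\le q$ while $V^{2}\ge 1$ and $C.V=V^{2}+(C-V).V\le 2q-1$. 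Since $h^{0}(S,V)\ge 2$ would follow if $C.V$ and $V^{2}$ were large enough relative to the genus of $V$ — more precisely, I will argue that $V$ moves in a pencil on $S$: because $V^{2}\ge 1$ and $V$ is effective on a toric surface with $(C-V).V>V^{2}\ge 1$, one shows $h^{0}(S,V)\ge 2$, and then the relation between $h^{0}(S,V)$, $h^{0}(S,V+K_S)$ and $C.V$ from Lemma~\ref{romalia} kicks in. Indeed if $h^{0}(S,V)\ge 2$ and $h^{0}(S,V+K_S)\ge 1$ then Lemma~\ref{romalia}(i) forces $C.V\ge q+2$; combined with $(\mathtt a)$ giving $C.V=V^{2}+(C-V).V\le q+V^{2}$ and the constraint on $V^{2}$ from $(\mathtt b)$, I expect a contradiction unless $h^{0}(S,V+K_S)=0$. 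In the latter case $V^{2}+V.K_S\le 2h^{0}(S,V)-2 - \dots$ — here I would use Riemann--Roch on $S$ together with $h^{0}(S,V+K_S)=0$ to bound $V^{2}$ from above by something like $2h^{0}(S,V)-2$, and then reconcile this with $(\mathtt b)$'s lower bound $C^{2}\le(k+V^{2})^{2}/V^{2}$ and $C^{2}\ge\tfrac34 q^{2}$. The few genuinely borderline $(q,V^{2})$ pairs that survive these inequalities I would eliminate by hand, using the explicit description of $C$ and the relatively minimal hypothesis, much as in the proofs of Propositions~\ref{k3}--\ref{k5}; the three excluded triples $(g,q)=(4,4),(5,4),(10,6)$ should be exactly the cases where this final elimination fails.

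The main obstacle I anticipate is the last step: ruling out the ``bad'' divisor $V$ in the narrow numerical window where $(\mathtt a)$ and $(\mathtt b)$ are simultaneously nearly satisfiable. The clean inequalities $C^{2}\ge\tfrac34 q^{2}$ and $k\le q$ and Lemma~\ref{romalia} will kill the generic situation, but near the boundary the argument must become geometric — interpreting $V$ via its lattice polygon $\square_V\subset\square_C$, using that $h^{0}(S,V)$ equals the number of lattice points of $\square_V$, and exploiting relative minimality to force $\square_C$ into one of a short explicit list. I expect that in precisely the configurations corresponding to $(g,q)=(4,4),(5,4),(10,6)$ such a $V$ does exist (trigonal pencils, the extremal Brill--Noether case, and the $(3,3)$-complete-intersection case), which is why the hypothesis of the proposition excludes them; verifying that these are the \emph{only} exceptions is the delicate combinatorial bookkeeping, and I would organize it by the value of $q\in\{3,4,5,6,\dots\}$, with $q\ge 7$ handled uniformly by the inequalities and $q\le 6$ by finite inspection.
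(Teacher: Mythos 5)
Your overall frame matches the paper: reduce to the relatively minimal case, apply Serrano's theorem with $p=k$, bound $s=V^{2}$ by combining property $(\mathtt b)$ with $C^{2}\ge\tfrac34q^{2}$ and the refined bounds of Propositions \ref{k3}--\ref{k5}, and use Lemma \ref{romalia} together with Riemann--Roch on $S$ to control $h^{0}(S,V)$ and $h^{0}(S,V+K_{S})$. (Your explicit check of the hypothesis $C^{2}>4k$ is a sound addition.) But there is a genuine gap at the heart of the argument: you hope that, once $h^{0}(S,V+K_{S})=0$ and $h^{0}(S,V)$ is bounded below, the inequalities from $(\mathtt a)$, $(\mathtt b)$ and $C^{2}\ge\tfrac34q^{2}$ can be ``reconciled'' into a numerical contradiction, leaving only finitely many borderline $(q,V^{2})$ pairs to kill by lattice-polygon inspection. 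No such numerical contradiction exists. For $s=1$ (and likewise $s=2$) the constraints $3\le C.V\le k+s$, $C^{2}\le (k+s)^{2}/s$, $C^{2}\ge\tfrac34 k^{2}$ and $h^{0}(S,V)\ge s+2$ are simultaneously satisfiable for \emph{every} value of $k$, so the surviving cases are not a finite list indexed by small $q$, and in particular your claim that $q\ge7$ is ``handled uniformly by the inequalities'' is false.

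What is missing is the geometric mechanism that the paper uses to derive the contradiction in exactly these cases: from $C.(V-C)<0$ one gets $h^{0}(S,V-C)=0$, so the restriction map $H^{0}(S,V)\to H^{0}(C,V|_{C})$ is injective and $|V|_{C}|$ contains a $g^{r}_{l}$ with $r\ge s+1$ and $l\le k+s$; subtracting $r-2$ general points gives a net $g^{2}_{l-r+2}$, which cannot be very ample because $C$ is not isomorphic to a plane curve, and this yields a pencil of degree $l-r\le k-1$, contradicting $\mathrm{gon}(C)=k$. Without this step (or some substitute transferring the moving divisor $V$ to a low-degree linear series on $C$), the cases $s\le 2$ --- which occur for all gonalities --- are not excluded, and the proof does not close. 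A secondary, smaller issue: your assertion that $V^{2}\ge1$ and $(C-V).V>V^{2}$ alone give $h^{0}(S,V)\ge2$ needs the Riemann--Roch computation (the paper's inequality (\ref{samayouyoroi}) combined with $h^{0}(S,V)\ge h^{0}(S,V+K_{S})$), and for $s=2$ Lemma \ref{romalia}(i) is not enough to force $h^{0}(S,V+K_{S})=0$; parts (ii) and (iii) are needed. Finally, the exceptional pairs $(g,q)=(4,4),(5,4),(10,6)$ enter not as cases where you must exhibit a bad $V$, but as hypotheses used to push $C^{2}$ above the bound in $(\mathtt b)$ for $(k,s)=(4,3),(5,3),(5,4)$ and to exclude the single残 case $(k,s)=(6,3)$, which forces the $g=10$ triangle of Fig.\ \ref{fig29}.
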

\begin{proof}
If $(S,C)$ is not relatively minimal, by the method explained before 
Definition \ref{rebe}, we can obtain an equivariant morphism $\psi$ from 
$S$ to another compact smooth toric surface $S'$ such that $(S',C)$ is 
relatively minimal. Clearly, for 
a morphism $\varphi$ from $S'$ to $\mathbb P^{1}$, the 
composite $(\varphi \circ \psi)|_{C}$ coincides with $\varphi |_{C}$. 
Hence, it is sufficient to consider the case where $(S,C)$ is 
relatively minimal. 

By the condition $g\geq 2$, we have $q\geq 2$. If $q=2$, 
then $k=2$ and $C$ has only one gonality pencil. Thus our lemma is 
obvious in this case. Let us consider the case where $q\geq 3$. 
Suppose that, for $C$, $S$ and $f$, there exists an effective 
divisor $V$ satisfying the two properties ({\tt a}) and 
({\tt b}) in Theorem \ref{serrano}. We put $s=V^{2}$. 
By the inequality $q\geq k$ and Proposition \ref{34}, we have 
$1\leq s<k$ and $\frac{\mbox{3}}{\mbox{4}}k^{2}\leq C^{2}\leq 
\frac{\mbox{$(k+s)^{2}$}}{\mbox{$s$}}$. 
It follows that 
\begin{eqnarray*}
\left\{
\begin{array}{ll}
s=1 & (k\geq 9),\\
1\leq s\leq 2 & (k=7,8),\\
1\leq s\leq 3 & (k=6),\\
1\leq s\leq 4 & (k=5),\\
1\leq s\leq 3 & (k=4),\\
1\leq s\leq 2 & (k=3),\\
s=1 & (k=2).
\end{array}
\right.
\end{eqnarray*}

We first consider the case where $s\leq 2$. By Riemann-Roch 
theorem, we have 
\begin{eqnarray*}
\hspace{-1mm}
\begin{array}{rl}
0 &\!\!\! \leq h^{1}(S,V+K_{S})
=h^{0}(S,V+K_{S})+h^{0}(S,-V)-\frac{1}{2}(V+K_{S}).V-\chi ({\cal O}_{S})\\
\frac{1}{2}V.K_{S} &\!\!\! \leq h^{0}(S,V+K_{S})-\frac{s}{2}-1,\\
\end{array}
\end{eqnarray*}
\vspace{-7mm}
\begin{eqnarray}\label{samayouyoroi}
\hspace{8mm}\begin{array}{rl}
0 &\!\!\! \leq h^{1}(S,V)
=h^{0}(S,V)+h^{0}(S,K_{S}-V)-\frac{1}{2}V.(V-K_{S})-\chi ({\cal O}_{S})
\\
 &\!\!\! \leq h^{0}(S,V)+h^{0}(S,V+K_{S})-s-2.
\end{array}
\end{eqnarray}
Since $h^{0}(S,V)\geq h^{0}(S,V+K_{S})$, by (\ref{samayouyoroi}), we 
obtain $2h^{0}(S,V)\geq s+2$. 
Assume that $s=1$. Then we have $h^{0}(S,V)\geq 2$ 
and $C.V\leq k+1\leq q+1$ by the property 
({\tt a}). Hence we have $h^{0}(S,V+K_{S})=0$ by 
Lemma \ref{romalia} (i). It follows from (\ref{samayouyoroi}) 
that $h^{0}(S,V)\geq s+2$. In the case $s=2$, 
since $h^{0}(S,V)\geq 2$ and $C.V\leq q+2$, we 
have $h^{0}(S,V+K_{S})\leq 1$ by Lemma \ref{romalia} (ii). 
We thus have $h^{0}(S,V)\geq 3$ by (\ref{samayouyoroi}), and 
$h^{0}(S,V+K_{S})=0$ by Lemma \ref{romalia} (iii). It follows from 
(\ref{samayouyoroi}) that $h^{0}(S,V)\geq s+2$. On the other hand, 
since
\begin{eqnarray*}
C.(V-C)=C.V-C^{2}\leq \left\{
\begin{array}{ll}
k+s-\frac{3}{4}q^{2}\leq 
q+2-\frac{3}{4}q^{2}<0 & (q\geq 3),\\
k+s-12<0 & (q=2)
\end{array}
\right.
\end{eqnarray*}
by Proposition \ref{34} and Proposition \ref{k3}, 
we obtain $h^{0}(S,V-C)=0$. Therefore, in the case where $s\leq 2$, 
the cohomology exact sequence 
$$0\rightarrow H^{0}(S,V-C)\rightarrow H^{0}(S,V)\rightarrow 
H^{0}(C,V|_{C})\rightarrow \cdots$$
gives the inequality $h^{0}(C,V|_{C})\geq s+2$. If we 
write $g_{l}^{r}=|V|_{C}|$, then $r\geq s+1$ and $l\leq k+s$. We obtain 
a net $g_{l-r+2}^{2}$ by subtracting $r-2$ general points of $C$ from 
it. Since $C$ is not isomorphic to a plane curve, $g_{l-r+2}^{2}$ is 
not very ample. Then we obtain a pencil $g_{l-r}^{1}$ such 
that $l-r\leq k+s-(s+1)=k-1$, a contradiction. 

Let us show that the cases where $(k,s)=(4,3),(5,3),(5,4)$ do not occur. 
Assume that $k=4$. If $q\geq 5$, then we 
have $C^{2}\geq \frac{3}{4}q^{2}>18$. If $q=4$, then we 
deduce $C^{2}\geq 17$ by the assumption $(g,q)\neq (4,4),(5,4)$ and 
Proposition \ref{k4}. We conclude that $s$ must be at most two 
by the property ({\tt b}). In the case $k=5$, since $q\geq 5$, we 
have $C^{2}\geq 25$ by Proposition \ref{k5} and Proposition \ref{34}. 
Hence $s$ is one in this case. 

The case $(k,s)=(6,3)$ remains to consider. Assume 
that $k=6$ and $s=3$. We take a curve $C_{0}$ as in 
Lemma \ref{ikkakuusagi}. By Proposition \ref{34}, 
$$27=\frac{3}{4}k^{2}\leq \frac{3}{4}q^{2}\leq C_{0}^{2}\leq C^{2}\leq 
\frac{(k+s)^{2}}{s}=27,$$
which yields $q=6$ and $C_{0}^{2}=C^{2}=27$. Hence we have 
$\square_{C}=\square_{C_{0}}$ by Remark \ref{animaruzonbi}. 
By the argument in the proof of 
Proposition \ref{34}, we see that $C_{0}$ is a curve of type (iii) in 
it. Then the inequality (\ref{ariahan}) gives $q'=6$. 
Moreover, by the inequality (\ref{furogga}), we deduce that 
$c=e=6-(a+b)/2$ and 
$a-b=-6$. We thus have $(a,b,c,e)=(0,6,3,3)$ and $g=10$ 
(see Fig. \ref{fig29}). 
%
\begin{figure}[h]\hspace{-11mm}
\begin{picture}(450,73)
\linethickness{0.1mm}
\multiput(193,14)(0,10){7}{\line(1,0){60}}
\multiput(193,14)(10,0){7}{\line(0,1){60}}
\linethickness{0.4pt}
\multiput(192.5,43.8)(0.2,0.3){4}{\line(1,-1){30.2}}
\multiput(223.4,13.7)(-0.35,0.1){4}{\line(1,2){29.9}}
\multiput(193.2,43.4)(-0.2,0.3){4}{\line(2,1){59.9}}
\put(223,14.9){\circle*{1}}
\put(203,1){$(0,6,3,3)$}
\color{white}
\put(213,39){\line(0,1){10}}
\put(223,37.8){\line(0,1){6}}
\put(212,44){\line(1,0){18}}
\color{black}
\put(213,40){$\square_{C}$}
\end{picture}
\caption{}\label{fig29}
\end{figure}
\end{proof}
%
%
%
%
\begin{lemma}\label{g10}
Let $C$ be a curve as in Theorem \ref{mthm}, and assume that $(S,C)$ is 
relatively minimal. If $q=6$ and $g=10$, then $\square_{C}$ is a 
triangle as in Fig. \ref{fig29}. 
\end{lemma}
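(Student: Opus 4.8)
The plan is to convert the hypotheses into combinatorial data about the lattice polygon $\square_C$ and then pin $\square_C$ down by a short case analysis that largely repeats computations already performed. Recall the standard toric dictionary: for a nef curve $C$ on $S$ one has $g(C)=\#\big(\operatorname{int}\square_C\cap\mathbb Z^2\big)$ and $C^2=2\,\mathrm{Area}(\square_C)$; equivalently, via Pick's formula, $g(C)=\mathrm{Area}(\square_C)-\tfrac12\,C.(-K_S)+1$, where $C.(-K_S)=\sum_i C.D_i$ is the number of boundary lattice points of $\square_C$. Thus the hypotheses $q=6$, $g=10$ say exactly that $\square_C$ has lattice width $6$ and precisely $10$ interior lattice points, and the goal is to show it is forced to be the triangle of Fig.~\ref{fig29}. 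Note that $q'=d(C,(1,0))\ge 6$ automatically, since the lattice width is the minimum of $d(C,(x,y))$ over all directions.

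First I would apply Lemma~\ref{ikkakuusagi} to obtain $S_0$ and $C_0$ with $C_0^2\le C^2$, $d(C_0,(0,1))=6$, $d(C_0,(1,0))=q'$, $d(C_0,(1,\pm1))\ge q'$, and $\square_{C_0}$ of the restricted shape described there, and then run through the three cases (a), (b), (c) of the proof of Proposition~\ref{34}. In (a), (b) and in subcases (i), (ii) of (c) one has $C_0^2\ge qq'=6q'\ge 36$, hence $C^2\ge 36$; the task here is to check that such a $\square_C$, while keeping lattice width exactly $6$, cannot carry only $10$ interior lattice points — the interior polygon of a long lattice polygon of width $6$ is itself long and so contains more than $10$ lattice points, which bounds $q'$ and leaves finitely many explicit shapes, discarded one by one (the simplex $\operatorname{conv}\{(0,0),(6,0),(0,6)\}$ because it would exhibit $C$ as a smooth plane sextic, and the corner-type shapes because they violate relative minimality). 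In the remaining subcase (iii) of (c), inequality~(\ref{ariahan}) gives $C^2\ge 6q'-9$ while~(\ref{furogga}) controls the parameters $(a,b,c,e)$; imposing $g=10$ together with $C^2\ge\tfrac34q^2=27$ forces $q'=6$, $C^2=27$ and $(a,b,c,e)=(0,6,3,3)$ — precisely the computation already carried out for the pair $(k,s)=(6,3)$ at the end of the proof of Proposition~\ref{kandata}.

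Since in every surviving case $C^2=27=C_0^2$, Remark~\ref{animaruzonbi} gives $\square_C=\square_{C_0}$, and the analysis above identifies this polygon as the triangle of Fig.~\ref{fig29}, which completes the proof. I expect the genuine difficulty to lie entirely in the bookkeeping of interior lattice points during the reduction: because $\square_{C_0}$ need not be contained in $\square_C$ one cannot simply assert $g(C_0)\le g(C)$, so the ``large'' shapes (those with $C^2\ge 36$) have to be excluded by arguing directly on $\square_C$ that lattice width $6$ is incompatible with having only $10$ interior lattice points — that is, the hard part is obtaining the upper bound $C^2\le 27$, after which Proposition~\ref{34} and Remark~\ref{animaruzonbi} finish everything mechanically.
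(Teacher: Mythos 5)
Your reduction does not actually use the hypothesis $g=10$ at the point where it must, and the step where it should enter is precisely the step you leave undone. Passing to $C_0$ via Lemma~\ref{ikkakuusagi} and the trichotomy of Proposition~\ref{34} only yields lower bounds ($C^2\geq C_0^2\geq qq'$ or $C^2\geq 6q'-9$); since, as you yourself note, $\square_{C_0}$ need not sit inside $\square_C$, the interior lattice-point count of $\square_{C_0}$ is not controlled by $g(C)$, so the genus hypothesis cannot be fed into the $(a,b,c,e)$-bookkeeping of Proposition~\ref{34}. In particular your assertion in case (iii) that ``$g=10$ together with $C^2\geq 27$ forces $q'=6$, $C^2=27$ and $(a,b,c,e)=(0,6,3,3)$'' is unsupported: in the proof of Proposition~\ref{kandata} that conclusion was extracted from the \emph{upper} bound $C^2\leq (k+s)^2/s=27$ supplied by Serrano's condition ({\tt b}), and no analogue of that upper bound is available under the hypotheses of Lemma~\ref{g10}. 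Likewise, the exclusion of all shapes with $C^2\geq 36$ (cases (a), (b), (c)(i)--(ii), and (c)(iii) with $q'\geq 7$) is exactly the statement that lattice width $6$, relative minimality, non-planarity and only $10$ interior points are incompatible with such polygons; you only gesture at this (``discarded one by one'', ``the interior polygon of a width-$6$ polygon is long''), and the heuristic is genuinely delicate --- for instance there are lattice polygons of width $4$ containing exactly $10$ lattice points, so bounding the interior hull requires a real argument, not just the observation that the width of the interior hull is about $q-2$.

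What you have deferred as ``bookkeeping'' is in fact the entire proof. The paper does not go through $C_0$ at all: it works directly with $\square_C$, normalizes $l(C,(0,1))\cap l(C,(1,0))=O$, and enumerates configurations according to which corners of the bounding box lie in $\square_C$ and through which point $(0,a)$ the support line $l(C,(1,-1))$ passes ($a=0,\dots,-6$), using relative minimality and $d(C,(1,-1))\geq q'$ at each step to exhibit at least $11$ interior lattice points in every configuration except the plane-sextic triangle (excluded by hypothesis) and the triangle with vertices $O$, $(-3,-6)$, $(-6,-3)$ of Fig.~\ref{fig29}. Until you carry out an analysis of that kind (or supply some other mechanism converting $g=10$ into an upper bound on the shape of $\square_C$), the proposal is a plan rather than a proof.
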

\begin{proof}
%
%
%
%
%
%
In this proof, we often use the relative minimality of $C$ and the 
property $d(C,(1,-1))\geq q'$ without further mention. We denote 
by Int\,$\square_{C}$ the interior of $\square_{C}$, and 
by $l((a_{1},b_{1}),(a_{2},b_{2}))$ the segment connecting two 
points $(a_{1},b_{1})$ and $(a_{2},b_{2})$. 
We assume that the point $l(C,(0,1))\cap l(C,(1,0))$ is the 
origin $O$. Now we suppose that none of $O$, $(0,-6)$, $(-q',-6)$ and 
$(-q',0)$ is contained in $\square_{C}$. It follows that also the 
eight points 
$(-1,0)$, $(0,-1)$, $(0,-5)$, $(-1,-6)$, $(-q'+1,-6)$, $(-q',-5)$, 
$(-q',-1)$, $(-q'+1,0)$ are not contained in $\square_{C}$. Assume 
that $(0,-3)$ is contained in $\square_{C}$. We can take a lattice 
point $P\in l(-1,0)\cap \square_{C}$. We define $A$ as a domain 
surrounded by the four segments $l((0,-3),(-q'+2,-6))$, 
$l((-2,-6),P)$, $l(P,(-2,0))$ and $l((-q'+2,0),(0,-3))$. In any case, 
we see that there exist more than ten lattice points in the interior 
of $A$ (see Fig. \ref{fig30}). 
%
\begin{figure}[h]\hspace{-11mm}
\begin{picture}(450,63)
\glay
\multiput(86.4,9)(0,0.48){94}{\line(1,0){57.2}}
\color{white}
\multiput(86.3,41.2)(0,0.45){3}{\line(2,1){25.6}}
\multiput(113.5,54)(0,0.45){3}{\line(4,-3){30}}
\multiput(117,9)(0.2,-0.3){3}{\line(4,3){26.5}}
\multiput(86.5,37.9)(-0.24,-0.24){4}{\line(1,-1){29}}
\multiput(85,47)(3.6,1.8){7}{\circle*{12}}
\multiput(118,57)(4,-3){8}{\circle*{10}}
\multiput(128,57)(4,-3){4}{\circle*{14}}
\multiput(120,5)(4,3){7}{\circle*{10}}
\multiput(135,10)(4,3){2}{\circle*{14}}
\multiput(85,32)(3,-3){9}{\circle*{10}}
\multiput(83,20)(3,-3){5}{\circle*{15}}
\color{black}
\linethickness{0.1mm}
\multiput(85,0)(0,10){7}{\line(1,0){60}}
\multiput(85,0)(10,0){7}{\line(0,1){60}}
\linethickness{0.4pt}
\multiput(105.1,0)(-0.2,0.2){2}{\line(4,3){39.9}}
\multiput(84.8,40)(0.22,0.22){2}{\line(1,-1){40}}
\multiput(85.2,39.9)(-0.2,0.22){2}{\line(2,1){40}}
\multiput(105,59.9)(0.18,0.2){2}{\line(4,-3){40}}
\put(131.1,57.5){\footnotesize $\times$}
\put(141.1,57.5){\footnotesize $\times$}
\put(141.1,47.5){\footnotesize $\times$}
\put(141.1,7.5){\footnotesize $\times$}
\put(141.1,-2.5){\footnotesize $\times$}
\put(131.1,-2.5){\footnotesize $\times$}
\put(91.1,-2.5){\footnotesize $\times$}
\put(81.1,-2.5){\footnotesize $\times$}
\put(81.1,7.5){\footnotesize $\times$}
\put(81.1,47.5){\footnotesize $\times$}
\put(81.1,57.5){\footnotesize $\times$}
\put(91.1,57.5){\footnotesize $\times$}
\put(85,40){\circle*{3}}
\put(145,30){\circle*{3}}
\put(148.5,59){$O$}
\put(72.5,37){$P$}
\glay
\put(110,30){\line(1,0){10}}
\put(115,26){\line(0,1){11}}
\color{black}
\put(110,27){$A$}
\glay
\multiput(196.4,8.2)(0,0.48){93}{\line(1,0){57.2}}
\color{white}
\multiput(196.3,31.5)(-0.21,0.32){4}{\line(4,3){28}}
\multiput(225.9,52.5)(0.22,0.33){3}{\line(4,-3){27.8}}
\multiput(226,8)(0.22,-0.33){3}{\line(4,3){27.6}}
\multiput(196.4,28.4)(-0.22,-0.33){3}{\line(4,-3){27.6}}
\multiput(196.3,38)(4,3){7}{\circle*{10}}
\multiput(198,49)(4,3){2}{\circle*{12}}
\multiput(232,54)(4,-3){7}{\circle*{10}}
\multiput(248,52)(5,-4){2}{\circle*{12}}
\multiput(232,6)(4,3){7}{\circle*{10}}
\multiput(248,9)(5,4){2}{\circle*{12}}
\multiput(195,24)(4,-3){7}{\circle*{10}}
\multiput(198,13)(5,-4){2}{\circle*{12}}
\color{black}
\linethickness{0.1mm}
\multiput(195,0)(0,10){7}{\line(1,0){60}}
\multiput(195,0)(10,0){7}{\line(0,1){60}}
\linethickness{0.4pt}
\multiput(215.1,0)(-0.2,0.2){2}{\line(4,3){39.9}}
\multiput(195.1,30)(-0.2,0.2){2}{\line(4,3){39.9}}
\multiput(195,29.9)(0.18,0.2){2}{\line(4,-3){40}}
\multiput(215,59.9)(0.18,0.2){2}{\line(4,-3){40}}
\put(241.1,57.5){\footnotesize $\times$}
\put(251.1,57.5){\footnotesize $\times$}
\put(251.1,47.5){\footnotesize $\times$}
\put(251.1,7.5){\footnotesize $\times$}
\put(251.1,-2.5){\footnotesize $\times$}
\put(241.1,-2.5){\footnotesize $\times$}
\put(201.1,-2.5){\footnotesize $\times$}
\put(191.1,-2.5){\footnotesize $\times$}
\put(191.1,7.5){\footnotesize $\times$}
\put(191.1,47.5){\footnotesize $\times$}
\put(191.1,57.5){\footnotesize $\times$}
\put(201.1,57.5){\footnotesize $\times$}
\put(195,30){\circle*{3}}
\put(255,30){\circle*{3}}
\put(258.5,59){$O$}
\put(182.5,27){$P$}
\glay
\put(220,30){\line(1,0){10}}
\put(225,26){\line(0,1){11}}
\color{black}
\put(220,27){$A$}
\glay
\multiput(306.3,6.6)(0,0.48){93}{\line(1,0){57.2}}
\color{white}
\multiput(306,21.7)(-0.22,0.22){3}{\line(1,1){29.3}}
\multiput(336,52.5)(0.2,0.31){4}{\line(4,-3){27.6}}
\multiput(334,6.6)(0.22,-0.33){3}{\line(4,3){29.4}}
\multiput(306.3,18.8)(-0.17,-0.34){4}{\line(2,-1){26}}
\multiput(306,28)(3.5,3.5){8}{\circle*{10}}
\multiput(310,40)(7,7){2}{\circle*{15}}
\put(308,50){\circle*{10}}
\multiput(343,53)(4,-3){6}{\circle*{10}}
\multiput(355,53)(6,-5){2}{\circle*{12}}
\multiput(340.2,5)(4,3){7}{\circle*{10}}
\multiput(355,8)(6,4){2}{\circle*{13}}
\multiput(306.3,12)(4.2,-2.1){5}{\circle*{12}}
\color{black}
\linethickness{0.1mm}
\multiput(305,0)(0,10){7}{\line(1,0){60}}
\multiput(305,0)(10,0){7}{\line(0,1){60}}
\linethickness{0.4pt}
\multiput(325.1,0)(-0.2,0.2){2}{\line(4,3){39.9}}
\multiput(305.1,20)(-0.22,0.22){2}{\line(1,1){39.9}}
\multiput(304.7,19.9)(0.2,0.22){2}{\line(2,-1){40}}
\multiput(325,59.9)(0.18,0.2){2}{\line(4,-3){40}}
\put(351.1,57.5){\footnotesize $\times$}
\put(361.1,57.5){\footnotesize $\times$}
\put(361.1,47.5){\footnotesize $\times$}
\put(361.1,7.5){\footnotesize $\times$}
\put(361.1,-2.5){\footnotesize $\times$}
\put(351.1,-2.5){\footnotesize $\times$}
\put(311.1,-2.5){\footnotesize $\times$}
\put(301.1,-2.5){\footnotesize $\times$}
\put(301.1,7.5){\footnotesize $\times$}
\put(301.1,47.5){\footnotesize $\times$}
\put(301.1,57.5){\footnotesize $\times$}
\put(311.1,57.5){\footnotesize $\times$}
\put(305,20){\circle*{3}}
\put(365,30){\circle*{3}}
\put(368.5,59){$O$}
\put(292.5,17){$P$}
\glay
\put(330,30){\line(1,0){10}}
\put(335,26){\line(0,1){11}}
\color{black}
\put(330,27){$A$}
\end{picture}
\caption{}\label{fig30}
\end{figure}
%
Since $\square_{C}$ includes $A$, we obtain $g\geq 11$. Next we 
assume that $(0,-4)$ is contained in $\square_{C}$. Similarly to the 
previous case, we obtain $g\geq 11$ except for the two cases 
where $q'=6$ and $P=(-6,-2),(-6,-4)$ (see Fig. \ref{fig31}). 
%
\begin{figure}[h]\hspace{-11mm}
\begin{picture}(450,72)
\glay
\multiput(139.4,7)(0,0.48){96}{\line(1,0){57.3}}
\color{white}
\multiput(139.3,41.2)(0,0.44){3}{\line(2,1){25.6}}
\multiput(165.5,53.2)(0.22,0.22){3}{\line(1,-1){31}}
\multiput(172,7)(0.17,-0.34){3}{\line(2,1){24.5}}
\multiput(139.6,37.8)(-0.22,-0.22){3}{\line(1,-1){31}}
\multiput(140,47.8)(4,2){5}{\circle*{11}}
\multiput(172,53)(3.8,-3.8){8}{\circle*{10}}
\multiput(186.3,53)(4,-4){4}{\circle*{15}}
\multiput(177,2.8)(3,1.5){8}{\circle*{12}}
\multiput(138,32)(3.8,-3.8){8}{\circle*{10}}
\multiput(142,18)(4,-4){3}{\circle*{14}}
\put(142,10){\circle*{10}}
\color{black}
\linethickness{0.1mm}
\multiput(138,0)(0,10){7}{\line(1,0){60}}
\multiput(138,0)(10,0){7}{\line(0,1){60}}
\linethickness{0.4pt}
\multiput(138.2,39.9)(-0.2,0.22){2}{\line(2,1){40}}
\multiput(157.9,59.9)(0.22,0.22){2}{\line(1,-1){40}}
\multiput(158.2,-0.1)(-0.2,0.22){2}{\line(2,1){40}}
\multiput(137.8,40)(0.22,0.22){2}{\line(1,-1){40}}
\put(184.1,57.5){\footnotesize $\times$}
\put(194.1,57.5){\footnotesize $\times$}
\put(194.1,47.5){\footnotesize $\times$}
\put(194.1,7.5){\footnotesize $\times$}
\put(194.1,-2.5){\footnotesize $\times$}
\put(184.1,-2.5){\footnotesize $\times$}
\put(144.1,-2.5){\footnotesize $\times$}
\put(134.1,-2.5){\footnotesize $\times$}
\put(134.1,7.5){\footnotesize $\times$}
\put(134.1,47.5){\footnotesize $\times$}
\put(134.1,57.5){\footnotesize $\times$}
\put(144.1,57.5){\footnotesize $\times$}
\put(138,40){\circle*{3}}
\put(198,20){\circle*{3}}
\put(201.5,59){$O$}
\put(125.5,37){$P$}
%
\glay
\multiput(249.3,5.6)(0,0.48){92}{\line(1,0){57.3}}
\color{white}
\multiput(249.3,21.6)(-0.22,0.22){3}{\line(1,1){28}}
\multiput(278.2,49.9)(0.22,0.22){3}{\line(1,-1){28}}
\multiput(279.8,5.5)(0.17,-0.34){3}{\line(2,1){26.7}}
\multiput(249.3,18.7)(-0.17,-0.34){3}{\line(2,-1){26.7}}
\multiput(249,27.8)(3.8,3.8){7}{\circle*{10}}
\multiput(249,40)(5,5){3}{\circle*{15}}
\multiput(283,52)(3.8,-3.8){8}{\circle*{10}}
\multiput(296.2,51)(5,-5){3}{\circle*{14}}
\multiput(285,3)(4.1,2){6}{\circle*{10}}
\put(304,8){\circle*{10}}
\multiput(249,14)(4,-2){6}{\circle*{10}}
\put(253,6){\circle*{10}}
\color{black}
\linethickness{0.1mm}
\multiput(248,0)(0,10){7}{\line(1,0){60}}
\multiput(248,0)(10,0){7}{\line(0,1){60}}
\linethickness{0.4pt}
\multiput(248.1,20)(-0.22,0.22){2}{\line(1,1){39.9}}
\multiput(267.9,59.9)(0.22,0.22){2}{\line(1,-1){40}}
\multiput(247.7,19.9)(0.1,0.2){2}{\line(2,-1){40}}
\multiput(268.2,-0.1)(-0.1,0.2){2}{\line(2,1){40}}
\put(294.1,57.5){\footnotesize $\times$}
\put(304.1,57.5){\footnotesize $\times$}
\put(304.1,47.5){\footnotesize $\times$}
\put(304.1,7.5){\footnotesize $\times$}
\put(304.1,-2.5){\footnotesize $\times$}
\put(294.1,-2.5){\footnotesize $\times$}
\put(254.1,-2.5){\footnotesize $\times$}
\put(244.1,-2.5){\footnotesize $\times$}
\put(244.1,7.5){\footnotesize $\times$}
\put(244.1,47.5){\footnotesize $\times$}
\put(244.1,57.5){\footnotesize $\times$}
\put(254.1,57.5){\footnotesize $\times$}
\put(248,20){\circle*{3}}
\put(308,20){\circle*{3}}
\put(311.5,59){$O$}
\put(235.5,17){$P$}
%
\end{picture}
\caption{}\label{fig31}
\end{figure}
%
If $q'=6$ and $P=(-6,-2)$, then either $(-3,-1)$ or $(-4,-1)$ 
is contained in Int\,$\square_{C}$. Besides, either $(-2,-5)$ or 
$(-3,-5)$ is contained in Int\,$\square_{C}$. On the other hand, 
if $q'=6$ and $P=(-6,-4)$, then either $(-2,-5)$ or $(-4,-5)$ 
is contained in Int\,$\square_{C}$. Hence, in each case, 
we obtain $g\geq 11$. 

By the above consideration, we can assume that $O$ is contained in 
$\square_{C}$. There exists an integer $a$ with $-6\leq a\leq 0$ such 
that $l(C,(1,-1))$ passes through $(0,a)$. We first 
remark that the cases $a=-1,-5$ do not occur by the relative minimality. 
If $a=0$, then by the assumption $g=10$, $\square_{C}$ must be a triangle 
with vertices $O$, $(-6,-6)$ and $(-6,0)$. This contradicts the 
assumption that $C$ is not a plane curve. We obtain the same 
contradiction if $a=-6$. Let us consider the case $a=-2$. Then either 
$(-q'+2,0)$ or $(-q',-2)$ is contained in $\square_{C}$. 
We define $B$ as a domain surrounded by the five 
segments $l((0,0),(-4,-6))$, $l((0,-2),(-q',-6))$, 
$l((-4,-6),(-q',0))$, $l((-q',-6),(-q'+2,0))$ and $l((-q',-2),(0,0))$. 
Since $\square_{C}$ includes $B$, we obtain $g\geq 11$ if $q'\geq 7$ 
(see Fig. \ref{fig32}). 
%
\begin{figure}[h]\hspace{-12mm}
\begin{picture}(450,80)
\glay
\multiput(102.3,31)(0,0.48){95}{\line(1,0){57}}
\color{white}
\multiput(159,76.7)(0,0.42){3}{\line(-4,-1){20}}
\multiput(139.5,71.1)(0,0.42){3}{\line(-4,-1){20}}
\multiput(120,65.5)(0,0.42){4}{\line(-4,-1){18}}
\multiput(129.7,30.7)(0.42,0){3}{\line(2,3){30}}
\multiput(114,30.8)(0,-0.45){3}{\line(2,1){12}}
\multiput(126,37.7)(0,-0.45){3}{\line(2,1){12}}
\multiput(102.2,51.5)(-0.45,0){3}{\line(1,-2){11}}
\multiput(101,51.5)(-0.3,0){7}{\line(1,3){4}}
\multiput(102,67)(3.6,0.9){14}{\circle*{10}}
\multiput(102,74)(3.6,0.9){5}{\circle*{10}}
\multiput(137,30)(2,3){14}{\circle*{12}}
\multiput(151,30)(2,3){7}{\circle*{14}}
\multiput(121,29)(4,2.1){5}{\circle*{10}}
\multiput(100,45)(2,-4){5}{\circle*{10}}
\color{black}
\linethickness{0.1mm}
\multiput(90,17)(0,10){7}{\line(1,0){70}}
\multiput(90,17)(10,0){8}{\line(0,1){60}}
\linethickness{0.4pt}
\multiput(114.4,11.1)(-0.22,0.22){3}{\line(1,1){56}}
\multiput(89.8,77)(0.25,0.15){2}{\line(1,-2){30}}
\multiput(120.2,17)(-0.25,0.15){2}{\line(2,3){40}}
\multiput(90,17)(-0.25,0.15){2}{\line(1,3){20}}
\put(160,77){\circle*{3}}
\put(173,69){$l(C,(1,-1))$}
\glay
\put(120.1,57){\line(1,0){9}}
\put(120,47.7){\line(0,1){6}}
\color{black}
\put(163,77){$O$}
\put(119,49){$B$}
\put(104,0){$q'\geq 7$}
\linethickness{0.6pt}
\qbezier[600](90,17)(125,37)(160,57)
\qbezier[600](90,57.05)(125,67.05)(160,77.05)
\glay
\multiput(280.4,28.5)(0,0.48){101}{\line(1,0){48.5}}
\color{white}
\multiput(280.4,61)(0,0.42){3}{\line(3,1){48}}
\multiput(298,28.2)(0.42,0){3}{\line(2,3){31}}
\multiput(287.4,28)(0,-0.42){4}{\line(3,2){17}}
\multiput(280.3,45)(-0.42,0){3}{\line(1,-3){6}}
\multiput(280.5,50)(-0.2,0.07){15}{\line(1,3){4}}
\multiput(280.4,67)(3,1){15}{\circle*{10}}
\multiput(280.4,74)(3,1){4}{\circle*{10}}
\multiput(305,27)(2.1,3.2){14}{\circle*{13}}
\multiput(320,27)(2.1,3.2){7}{\circle*{15}}
\multiput(295,27)(3,2){4}{\circle*{10}}
\multiput(277.8,37.6)(1,-3){4}{\circle*{10}}
\color{black}
\linethickness{0.1mm}
\multiput(270,17)(0,10){7}{\line(1,0){60}}
\multiput(270,17)(10,0){7}{\line(0,1){60}}
\linethickness{0.4pt}
\multiput(284.4,11.1)(-0.22,0.22){3}{\line(1,1){56}}
\multiput(269.9,77)(0.22,0.15){2}{\line(1,-3){20}}
\multiput(290.2,17)(-0.25,0.15){2}{\line(2,3){40}}
\multiput(270.2,17)(-0.15,0.24){2}{\line(3,2){60}}
\multiput(270.1,17)(-0.22,0.15){2}{\line(1,3){20}}
\multiput(270,56.8)(-0.15,0.22){2}{\line(3,1){60}}
\put(330,77){\circle*{3}}
\put(343,69){$l(C,(1,-1))$}
\glay
\put(296,57){\line(1,0){9}}
\put(300,47.7){\line(0,1){11}}
\color{black}
\put(333,77){$O$}
\put(295,49){$B$}
\put(287,0){$q'=6$}
\end{picture}
\caption{}\label{fig32}
\end{figure}
%
In the case $q'=6$, we can observe that 
there are at least eight lattice points in Int\,$\square_{C}$. 
Note that either $(-2,-3)$ or $(-3,-4)$ is contained in 
Int\,$\square_{C}$. Moreover, if 
$(-4,0)\in \square_{C}$ (resp. $(-6,-2)\in \square_{C}$), 
then $(-3,-1)$ and $(-4,-1)$ (resp. $(-5,-2)$ and $(-5,-3)$) are 
contained in Int\,$\square_{C}$. Hence, we have $g\geq 11$ in this 
case also. 

We next consider the case $a=-4$. We can take a lattice point 
$Q\in l(C,(-1,0))\cap \square_{C}$. In the cases 
$Q=(-q',0),(-q',-1),(-q',-2),(-q',-3),(-q',-4)$, we 
define $B_{1}$ as a domain surrounded by the four 
segments $l(O,(-2,$ $-6)),l((0,-4),(-q',-6))$, $l((-2,-6),Q)$ 
and $l(Q,O)$. Then we see that there exist more 
than ten lattice points in the interior of $B_{1}$ (that is, $g\geq 11$) 
except for the case where $q'=6$ and $Q=(-6,-4)$ (see Fig. \ref{fig34}). 
%
\begin{figure}[h]\hspace{-11mm}
\begin{picture}(450,67)
\glay
\multiput(194.4,13.6)(0,0.48){105}{\line(1,0){57.6}}
\color{white}
\multiput(194.3,26.4)(-0.2,0.3){3}{\line(3,2){56.5}}
\multiput(236,13)(0.3,-0.1){3}{\line(1,3){16}}
\multiput(219,13.2)(0.1,-0.27){18}{\line(3,1){18.3}}
\multiput(194,24)(-0.1,-0.2){3}{\line(2,-1){22}}
\multiput(194,33)(3.3,2.2){16}{\circle*{12}}
\multiput(194,46)(3.3,2.2){10}{\circle*{12}}
\multiput(194.5,58)(3.3,2.2){4}{\circle*{12}}
\multiput(241.7,13)(1,3){15}{\circle*{10}}
\multiput(250,15)(1,3){4}{\circle*{10}}
\multiput(194,18)(4,-2){5}{\circle*{11}}
\color{black}
\linethickness{0.1mm}
\multiput(193,5)(0,10){7}{\line(1,0){60}}
\multiput(193,5)(10,0){7}{\line(0,1){60}}
\linethickness{0.4pt}
\multiput(193.2,25)(-0.2,0.2){2}{\line(3,2){60}}
\multiput(233,4.9)(-0.25,0.15){2}{\line(1,3){20}}
\multiput(193.2,4.9)(-0.15,0.25){2}{\line(3,1){59.9}}
\multiput(192.8,24.9)(0.2,0.22){2}{\line(2,-1){40}}
\multiput(228.4,0)(-0.22,0.22){3}{\line(1,1){36}}
\put(267,40){$l(C,(1,-1))$}
\put(193,25){\circle*{3}}
\put(253,65){\circle*{3}}
\put(256,63){$O$}
\put(181,22){$Q$}
\glay
\put(222.5,35){\line(1,0){9}}
\put(223,28.3){\line(0,1){11}}
\put(233,27){\line(0,1){7}}
\color{black}
\put(221,29.8){$B_{1}$}
\end{picture}
\caption{}\label{fig34}
\end{figure}
%
In this exceptional case, since either $(-3,-5)$ or $(-4,-5)$ is 
contained in Int\,$\square_{C}$, we obtain $g\geq 11$. 
When $Q$ is $(-q',-5)$ or $(-q',-6)$, we define $B_{2}$ as a 
domain surrounded by the five segments $l(O,(-2,-6))$, 
$l((0,-4),(-q',-6))$, $l((-2,-6),Q)$, $l(Q,(-q',-4))$ 
and $l((-q',-4),O)$. Then we obtain $g\geq 11$. 

Lastly, we consider the case $a=-3$. We define $E_{1}$ as a domain 
surrounded by the five segments $l((0,0),(-3,-6))$, $l((0,-3),(-q',-6))$, 
$l((-3,-6),(-q',0))$, $l((-q',-6),(-q'+3,0))$ and $l((-q',-3),(0,0))$. 
Since $\square_{C}$ includes $E_{1}$, we obtain $g\geq 11$ if 
$q'\geq 8$. In the case $q'=7$, there exist at least ten lattice 
points in the interior of $E_{1}$. Moreover, if $(-7,0)\in 
{\rm Int}\,\square_{C}$ (resp. $(-7,0)\notin {\rm Int}\,\square_{C}$), 
then we see that also $(-5,-2)$ 
(resp. $(-5,-3)$) is contained in Int\,$\square_{C}$. This means that 
$g\geq 11$. Let us consider the case $q'=6$. We denote 
by $R=(-6,b)$ the vertex of $\square_{C}$ on $l(C,(-1,0))$ whose 
$w$-coordinate is maximal. 
In the cases $b=-4,-5,-6$, a domain $E_{2}$ surrounded by the five 
segments $l(O,(-3,-6))$, $l((0,-3),(-6,-6))$, $l((-3,-6),R)$, 
$l(R,(-3,0))$ and $l((-6,-3),O)$ is included in $\square_{C}$ (see 
Fig. \ref{fig33}). 
%
%
\begin{figure}[h]\hspace{-11mm}
\begin{picture}(450,67)
\glay
\multiput(86.3,12)(0,0.48){106}{\line(1,0){58}}
\color{white}
\multiput(86.2,25.5)(-0.22,0.11){20}{\line(3,4){28}}
\multiput(98,40)(-0.15,0.3){3}{\line(2,1){45.5}}
\multiput(120,11.9)(0.3,0.15){3}{\line(1,2){24.5}}
\multiput(103,11.7)(0.15,-0.3){3}{\line(2,1){22}}
\multiput(86.3,22)(0,-0.43){3}{\line(3,-2){14.5}}
\multiput(86,39)(4,2){14}{\circle*{10}}
\multiput(86,48)(4,2){9}{\circle*{10}}
\multiput(85,57)(4,2){5}{\circle*{11}}
\multiput(125.5,12)(2,4){12}{\circle*{11}}
\multiput(137.5,14)(3,6){4}{\circle*{14}}
\multiput(108,9)(4,2){5}{\circle*{10}}
\multiput(86,16)(3.3,-2.2){4}{\circle*{11}}
\color{black}
\linethickness{0.1mm}
\multiput(85,3)(0,10){7}{\line(1,0){60}}
\multiput(85,3)(10,0){7}{\line(0,1){60}}
\linethickness{0.4pt}
\multiput(109.4,-2.9)(-0.22,0.22){3}{\line(1,1){44}}
\multiput(84.7,23)(0.15,0.24){2}{\line(3,-2){30}}
\multiput(115.2,3)(-0.24,0.12){2}{\line(1,2){30}}
\multiput(85.1,2.9)(-0.12,0.24){2}{\line(2,1){60}}
\multiput(85,22.8)(-0.3,0.17){2}{\line(3,4){30}}
\multiput(85.1,32.9)(-0.12,0.24){2}{\line(2,1){60}}
\put(145,63){\circle*{3}}
\put(85,23){\circle*{3}}
\glay
\put(108.7,33){\line(1,0){9}}
\put(115,27){\line(0,1){11}}
\color{black}
\put(148,62){$O$}
\put(108,28.5){$E_{2}$}
\put(73,20){$R$}
\glay
\multiput(196.1,9)(0,0.48){112}{\line(1,0){58}}
\color{white}
\multiput(196,15)(-0.3,0.2){3}{\line(3,5){28.5}}
\multiput(213.6,42.2)(-0.15,0.3){3}{\line(2,1){39.8}}
\multiput(228.5,8.8)(0.3,0.15){3}{\line(1,2){27}}
\multiput(208,9)(0.15,-0.3){3}{\line(2,1){27}}
\multiput(195,9)(0.07,0.2){16}{\line(3,-1){10.5}}
\multiput(191,16)(2.9,4.8){11}{\circle*{10}}
\multiput(196,37)(2.9,4.8){6}{\circle*{10}}
\multiput(195,50)(2.9,4.8){4}{\circle*{12}}
\multiput(213.3,48)(4,2){10}{\circle*{10}}
\put(226,60){\circle*{12}}
\multiput(234,10)(2,4){12}{\circle*{10}}
\multiput(243,10.2)(2,4){7}{\circle*{10}}
\multiput(252,10)(2,4){3}{\circle*{10}}
\multiput(213,5.8)(4,2){6}{\circle*{10}}
\color{black}
\linethickness{0.1mm}
\multiput(195,3)(0,10){7}{\line(1,0){60}}
\multiput(195,3)(10,0){7}{\line(0,1){60}}
\linethickness{0.4pt}
\multiput(219.4,-2.9)(-0.22,0.22){3}{\line(1,1){44}}
\multiput(194.7,12.9)(0.15,0.22){2}{\line(3,-1){30}}
\multiput(225.2,3)(-0.24,0.12){2}{\line(1,2){30}}
\multiput(195.1,2.9)(-0.12,0.24){2}{\line(2,1){60}}
\multiput(195,12.7)(-0.3,0.1){2}{\line(3,5){30}}
\multiput(195.1,32.9)(-0.12,0.24){2}{\line(2,1){60}}
\put(255,63){\circle*{3}}
\put(195,13){\circle*{3}}
\glay
\put(218.5,33){\line(1,0){9.5}}
\put(225,27){\line(0,1){11}}
\color{black}
\put(258,62){$O$}
\put(218,28.5){$E_{2}$}
\put(183,10){$R$}
\glay
\multiput(306,4)(0,0.48){122}{\line(1,0){58}}
\color{white}
\multiput(305.8,5)(-0.32,0.16){3}{\line(1,2){28.5}}
\multiput(326,44)(-0.16,0.32){3}{\line(2,1){37}}
\multiput(336,3.8)(0.32,0.16){4}{\line(1,2){29}}
\multiput(307.4,3.8)(0.16,-0.32){3}{\line(2,1){36.5}}
\multiput(302.5,10)(2,4){14}{\circle*{10}}
\multiput(305,31)(2,4){9}{\circle*{10}}
\multiput(305,48)(2,4){4}{\circle*{12}}
\multiput(326.4,49.2)(4,2){9}{\circle*{10}}
\put(337,58){\circle*{14}}
\multiput(343,3.8)(2,4){13}{\circle*{13}}
\multiput(355,6)(2,4){6}{\circle*{13}}
\put(362,6){\circle*{10}}
\multiput(312,1)(4,2){9}{\circle*{10}}
\put(332,5){\circle*{12}}
\color{black}
\linethickness{0.1mm}
\multiput(305,3)(0,10){7}{\line(1,0){60}}
\multiput(305,3)(10,0){7}{\line(0,1){60}}
\linethickness{0.4pt}
\multiput(329.4,-2.9)(-0.22,0.22){3}{\line(1,1){44}}
\multiput(335.2,3)(-0.24,0.12){2}{\line(1,2){30}}
\multiput(305.1,2.9)(-0.12,0.24){2}{\line(2,1){60}}
\multiput(305,2.9)(-0.24,0.12){2}{\line(1,2){30}}
\multiput(305.1,32.9)(-0.12,0.24){2}{\line(2,1){60}}
\put(365,63){\circle*{3}}
\put(305,3){\circle*{3}}
\glay
\put(329,33){\line(1,0){10}}
\put(335,27){\line(0,1){11}}
\color{black}
\put(368,62){$O$}
\put(328,28.5){$E_{2}$}
\put(293,0){$R$}
\end{picture}
\caption{}\label{fig33}
\end{figure}
%
Note that, in each case, either $(-1,-2)$ or $(-4,-5)$ is contained in 
Int\,$\square_{C}$. Moreover, in the case $b=-6$, 
either $(-2,-1)$ or $(-5,-4)$ is contained in Int\,$\square_{C}$. 
We thus obtain $g\geq 11$. If $-3\leq b\leq 0$, we define $E_{3}$ as 
a domain surrounded by the four segments $l(O,(-3,-6))$, 
$l((0,-3),(-6,-6))$, $l((-3,-6),R)$ and $l(R,O)$. If $-2\leq b\leq 0$, 
then there exist more than eleven lattice points in the interior 
of $E_{3}$. Assume that $b=-3$. If $(-3,-5)$ is not contained 
in ${\rm Int}\,\square_{C}$, then by a simple consideration, we see 
that $(-1,-2)$ and $(-2,-4)$ are contained in Int\,$\square_{C}$. 
If $(-3,-5)$ is contained in ${\rm Int}\,\square_{C}$, it is clear 
that the equality $g=10$ holds if and only if $R=(-6,-3)$ and 
$\square_{C}$ is a triangle with vertices $O$, $(-3,-6)$ and $(-6,-3)$. 
\end{proof}
Similarly to Lemma \ref{g10}, we obtain the following lemma. 
\begin{lemma}\label{g4}
Let $C$ be a curve as in Theorem \ref{mthm}, and assume that $(S,C)$ is 
relatively minimal. If $q=4$ and $g=4$, then $\square_{C}$ is the first 
triangle in Fig. \ref{fig35}. 
\end{lemma}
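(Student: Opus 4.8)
The plan is to adapt the proof of Lemma~\ref{g10} step for step, replacing $q=6$ and the target genus $10$ by $q=4$ and the target genus $4$; the bookkeeping is lighter, because for $q=4$ the supporting line $l(C,(1,-1))$ can meet the $w$-axis only at one of the five points $(0,a)$ with $a\in\{-4,\dots,0\}$. Throughout I would keep the standing normalisation $O=l(C,(0,1))\cap l(C,(1,0))=(0,0)$, use relative minimality and the inequalities $d(C,(1,\pm1))\geq q'$ freely, and use that $\square_C$ has exactly $g=4$ interior lattice points. Since the number of interior lattice points is monotone under inclusion of polygons, in each case it is enough either to exhibit a subpolygon of $\square_C$ carrying at least five interior lattice points (a contradiction), or to reach the asserted triangle.

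First I would dispose of the case in which none of the four corner points $O$, $(0,-4)$, $(-q',-4)$, $(-q',0)$ lies in $\square_C$: exactly as at the start of the proof of Lemma~\ref{g10}, it then follows that the eight neighbouring boundary points are excluded from $\square_C$ as well, and, using $d(C,(1,\pm1))\geq q'$, one produces a subregion of $\square_C$ with at least five interior lattice points. After a reflection we may therefore assume $O\in\square_C$, and then there is a unique $a\in\{-4,\dots,0\}$ with $l(C,(1,-1))$ passing through $(0,a)$. The extreme cases $a=0$ and $a=-4$ squeeze $\square_C$ into the triangle with vertices $O$, $(-4,-4)$, $(-4,0)$ (up to reflection): when $q'=4$ this triangle has only three interior lattice points, contradicting $g=4$, and when $q'>4$ the extra room is absorbed by the same domain-counting argument. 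The near-extreme cases $a=-1$ and $a=-3$, which are interchanged by the reflection $w\mapsto-q'-w$, are eliminated by relative minimality together with $d(C,(1,-1))\geq q'$, exactly as the cases $a=-1,-5$ are eliminated in the proof of Lemma~\ref{g10} (equivalently, as $a=0,-1$ are eliminated in the proof of Proposition~\ref{k4}).

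There remains $a=-2$, which is the crux, and here $n(1,-1)=2$. As in the $a=-3$ analysis of Lemma~\ref{g10} I would first split according to $q'$: for $q'\geq5$ a domain of the same kind as the $E_i$ there carries at least five interior lattice points, a contradiction, so $q'=4$; then $n(-1,1)=2$ as well, and $\square_C$ is confined to $-4\leq z\leq0$, $-4\leq w\leq0$, $z-w\leq2$, with $O$ one of its vertices. I would then organise the finitely many remaining possibilities by the position of the vertex $R$ of $\square_C$ on $l(C,(-1,0))$ with maximal $w$-coordinate, together with the corresponding vertex on $l(C,(0,-1))$; for each admissible position, relative minimality forces a prescribed set of lattice points into the interior of $\square_C$, and one checks that the count stays at $g=4$ only when $\square_C$ is the triangle with vertices $O$, $(-4,-2)$, $(-2,-4)$, which is the first triangle of Fig.~\ref{fig35}. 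The hard part is exactly this closing enumeration: as in the last paragraphs of the proof of Lemma~\ref{g10}, the difficulty is to keep the list of positions of $R$ (and of the bottom vertex) exhaustive while invoking relative minimality at each of the four corners, and to carry out the attendant lattice-point counts correctly.
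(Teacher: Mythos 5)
Your plan is exactly the paper's (implicit) argument: the paper gives no separate proof of this lemma, stating only that it follows "similarly to Lemma \ref{g10}", i.e.\ by the same normalisation $O\in\square_C$, the case split on the point $(0,a)$ hit by $l(C,(1,-1))$, relative minimality, and interior-lattice-point counts, which is precisely what you do (and your surviving case $a=-2$, $q'=4$ with the triangle $O$, $(-4,-2)$, $(-2,-4)$ is the correct outcome, lattice-equivalent to the first triangle of Fig.~\ref{fig35}). So the proposal is correct and takes essentially the same route as the paper, with only cosmetic imprecision (e.g.\ the claimed reflection swapping $a=-1$ and $a=-3$, and $n(-1,1)=2$ needing the relative-minimality step as in Proposition \ref{k4}) inside steps you already say are handled by relative minimality and $d(C,(1,\pm1))\geq q'$.
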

We are now ready to prove the main theorem. 
\begin{proof}[\indent\sc Proof of Theorem \ref{mthm}.]
As mentioned at the beginning of the proof of Proposition \ref{kandata}, 
the statements (i) and (ii) are obvious if $q=2$. Hence we assume 
that $q\geq 3$. \\
(i) Let $\varphi$ be a morphism from $S$ to $\mathbb P^{1}$ 
of ${\rm deg}\hspace{0.3mm}\varphi|_{C}=k$ whose existence is guaranteed 
by Proposition \ref{kandata}. We shall show that $\varphi$ is 
a toric fibration of $S$. We denote by $F$ the fiber of $\varphi$. 
Since $C.(F-C)\leq k-\frac{3}{4}k^{2}<0$, we have $h^{0}(C,F-C)=0$. 
Hence the cohomology exact sequence 
$$0\rightarrow H^{0}(S,F-C)\rightarrow H^{0}(S,F)\rightarrow 
H^{0}(C,F|_{C})\rightarrow \cdots $$
implies that $h^{0}(C,F|_{C})\geq h^{0}(S,F)$. Hence we have 
$h^{0}(S,F)\leq 2$. Namely, $\square_{F}$ is a segment connecting two 
lattice points. We denote these points by $O=(0,0)$ and $P=(z,w)$, 
where $z$ and $w$ are integers such that $(|z|,|w|)=1$. Then the 
point $(-w,z)$ must be contained in ${\rm Pr}^{\ast}(S)$. Therefore, by 
Fact \ref{jinmencho}, we see that $F$ is a fiber of some toric fibration. 

In what follows, by reembedding if necessary, we may assume 
that $(S,C)$ is relatively minimal. \\
(ii) If $(g,q)\neq (4,4),(5,4),(10,6)$, we have $k=q$ by 
(i). Assume that $g=q=4$. In this case, we have $k\leq 
\left \lfloor\frac{g+3}{2}\right \rfloor=3$ and $C^{2}\geq 12$ by 
Proposition \ref{k4}. 
Suppose that $k=2$. Then by Theorem \ref{serrano}, there exists an 
effective divisor $V$ on $S$ satisfying the properties ({\tt a}) and 
({\tt b}) for $X=C$ and $p=2$. Recall that, as we saw at the beginning of 
the proof of Proposition \ref{kandata}, $V^{2}$ must be more than two. 
Since this contradicts the property ({\tt a}), we can conclude that 
$k=3$. In the case where $g=5$ and $q=4$, similarly to the previous 
case, we can show that the supposition $k\leq 3$ yields a contradiction. 
If $g=10$ and $q=6$, then we 
obtain $k=6$ by (iv) which is proved below. 
\\
(iii) 
By \cite{acgh}, ${\rm Cliffdim}(C)=2$ if and only if $C$ is 
isomorphic to a plane curve of degree $\geq 5$. Besides, 
if ${\rm Cliffdim}(C)\geq 3$, then $g\geq 10$ holds. Hence, we 
have ${\rm Cliffdim}(C)=1$ in the cases $(g,q)=(4,4),(5,4)$. 
In other cases (except for the case $(g,q)=(10,6)$), we see that the 
number of gonality pencils on $C$ is finite by (i). It follows that 
${\rm Cliffdim}(C)=1$. \\
(iv) By Lemma \ref{g10} and Fig. \ref{fig29}, in this case, we can 
see the explicit structures of $S$ and $C$. First, 
$|-K_{S}|$ has no base points, $h^{0}(S,-K_{S})=4$ and $(-K_{S})^{2}=3$. 
Besides, we can write $C\sim -3K_{S}$, that is, $C.(-K_{S})=9$. We 
consider the morphism $\Phi _{|-K_{S}|}:S\rightarrow {\mathbb P}^{3}$, 
and put $T=\Phi_{|-K_{S}|}(S)$. Then, by the equality 
$${\rm deg}\hspace{0.3mm}\Phi _{|-K_{S}|}\cdot 
{\rm deg}\hspace{0.3mm}T=(-K_{S})^{2}=3,$$
we obtain ${\rm deg}\hspace{0.3mm}\Phi _{|-K_{S}|}=1$ and 
${\rm deg}\hspace{0.3mm}T=3$. We denote by $H$ a 
hyper plane of $T$. The short exact 
sequence of sheaves $0\rightarrow {\cal O}_{\mathbb P^{3}}\rightarrow 
{\cal O}_{\mathbb P^{3}}(3H)\rightarrow {\cal O}_{T}(3H)\rightarrow 0$ 
induces the surjection 
$H^{0}({\mathbb P^{3}},3H)\rightarrow H^{0}(T,3H|_{T})=H^{0}(T,C)$, 
where we abuse notation to denote the image of $C$ 
under $\Phi _{|-K_{S}|}$ by the same symbol. 
Hence we see that $C$ is an irreducible component of $T\cap T'$, 
where $T'$ is a cubic surface in $\mathbb P^{3}$. Since 
$$9={\rm deg}\hspace{0.3mm}T\cap T'\geq 
{\rm deg}\hspace{0.3mm}C=C.(-K_{S})=9,$$
we can conclude that $C=T\cap T'$. 
\end{proof}
%
%
%
\section{The case where {\boldmath $(g,q)=(4,4)$}}\label{exam}

In this section, we focus on the exceptional curve in 
Theorem \ref{mthm} (ii), and exhibit its structure. Let $S$, 
$C$ and $q$ be as in Theorem \ref{mthm}, and assume $g=q=4$. By 
Lemma \ref{g4}, the fan $\Delta_{S}$ and the lattice 
polygon $\square_{C}$ are as in Fig. \ref{fig13}. 
%
\begin{figure}[h]\hspace{-10mm}
\begin{picture}(450,82)
\setlength\unitlength{1.2pt}
\linethickness{0.1mm}
\multiput(115,19)(0,10){4}{\line(1,0){50}}
\multiput(125,11)(10,0){4}{\line(0,1){48}}
\color{white}
\put(125,57.5){\line(0,1){4}}
\color{black}
\multiput(115,38.4)(0,0.22){6}{\line(1,0){50}}
\multiput(144.4,11)(0.22,0){6}{\line(0,1){48}}
\multiput(127,56.2)(0.27,0.27){4}{\line(1,-1){36}}
\multiput(145.4,38.7)(-0.27,0.27){4}{\line(1,1){17}}
\multiput(144.5,39)(0.36,0.18){3}{\line(1,-2){13}}
\multiput(144.5,38.7)(0.18,0.36){3}{\line(-2,1){27}}
\put(133,62.5){\small $\sigma(D_{1})$}
\put(158,59){\small $\sigma(D_{2})$}
\put(109.5,60){\small $\sigma(D_{9})$}
\put(145,49){\circle*{3}}
\put(155,49){\circle*{3}}
\put(155,39){\circle*{3}}
\put(155,29){\circle*{3}}
\put(155,19){\circle*{3}}
\put(145,29){\circle*{3}}
\put(135,39){\circle*{3}}
\put(125,49){\circle*{3}}
\put(135,49){\circle*{3}}
\put(174.3,51){\circle*{2}}
\put(175,45.5){\circle*{2}}
\put(175.2,40){\circle*{2}}
\put(107,52){\circle*{2}}
\put(105.6,46.5){\circle*{2}}
\put(105,41){\circle*{2}}
\put(140,0){$\Delta_{S}$}
\multiput(210,13)(0,10){5}{\line(1,0){40}}
\multiput(210,13)(10,0){5}{\line(0,1){40}}
\multiput(210.4,12.7)(-0.15,0.3){4}{\line(2,1){40}}
\multiput(229.4,52.7)(0.27,0.27){4}{\line(1,-1){20}}
\multiput(210.3,12.8)(-0.3,0.15){4}{\line(1,2){20}}
\color{white}
\put(224.5,33){\line(1,0){14}}
\put(230,30){\line(0,1){9}}
\color{black}
\put(225,31){$\square_{C}$}
\end{picture}
\caption{}\label{fig13}
\end{figure}
%
Considering the shape of $\square_{C}$, we can take a plane model 
$$C':x^{4}y^{2}+
x^{2}y^{4}+ax^{2}y^{2}z^{2}=z^{6}\ \ (a\in {\mathbb C})$$
of $C$. We shall denote the pull-backs on $S$ of functions $x$, $y$ 
and $z$ by same symbols. Since $K_{S}\sim -\Sigma_{i=1}^{9}D_{i}$ 
and $C\sim -2K_{S}$, we obtain $h^{0}(S,K_{S})=h^{1}(S,K_{S})=0$, 
which implies that $H^{0}(C,K_{C})
\simeq H^{0}(S,-K_{S})=\langle x^{2}y, xy^{2}, xyz, z^{3}\rangle $. 
Hence the restriction of the rational map 
\begin{eqnarray*}
\begin{array}{rll}
{\mathbb P^{2}} &\!\!\! -\hspace{-1.9mm}\rightarrow 
\hspace{-6.55mm}\raisebox{0.3mm}
{\scriptsize \color{white} $\bullet \hspace{0.75mm}\bullet$}
 &\!\!\! {\mathbb P^{3}}\\
\ \ \ (x:y:z) &\!\!\! \longmapsto &\!\!\! (x^{2}y:xy^{2}:xyz:z^{3})
\end{array}
\end{eqnarray*}
to $C'$ gives the canonical embedding of $C$. 
Let $(t:u:v:w)$ be a homogeneous coordinate system in $\mathbb P^{3}$. 
Then the canonical curve of $C$ lies on a quadric surface 
$T:t^{2}+u^{2}+av^{2}=w^{2}$. Thus if $a\neq 0$, then 
two families of lines on $T$ cut out two distinct pencils $g_{3}^{1}$ and 
$h_{3}^{1}$ on $C$. 
On the other hand, if $a=0$, then $T$ is a quadric cone, and one family 
of lines cuts out a unique $g_{3}^{1}$ on $C$. In sum, we can conclude 
that 
\begin{itemize}
\setlength{\itemsep}{-2pt}
\item[(i)]
If $a\neq 0$, $C$ is a curve of bidegree $(3,3)$ on 
$\mathbb P^{1}\times \mathbb P^{1}$. 
\item[(ii)]
If $a=0$, $C$ is linearly equivalent to 
$3\Delta _{0}+6F$ on $\Sigma_{2}$, where $\Delta _{0}$ and $F$ denote 
the minimal section and the fiber of the ruling of $\Sigma_{2}$, 
respectively. 
\end{itemize}
Unfortunately, however, we can not distinguish the above difference 
from the information of the lattice polygon. 
%
%
%
\section{Application}\label{appl}

By combining Theorem \ref{mthm} with results in \cite{kaw}, we can 
compute Weierstrass gap sequences at ramification points (with high 
ramification indexes) of a gonality pencil. 
For example, in this section, we consider trigonal curves and 
provide a geometric interpretation of the structure of gap sequences 
at ramification points. 
Let us review the preliminary results. 
Firstly, it is known that a trigonal covering of $\mathbb P^{1}$ has 
four types of gap sequences. 
\begin{theorem}[\cite{cop1,cop2}]\label{gap}
Let $C$ be a smooth trigonal curve of genus $g$ and Maroni 
invariant $m$, 
and $P$ a ramification point of a trigonal covering from $C$ to 
${\mathbb P}^{1}$. Then the Weierstrass gap sequence at $P$ is one of 
the following types. \\[2mm]
\ \ In the case where $P$ is a total ramification point: 
\vspace{-2mm}
\begin{itemize}
\setlength{\itemsep}{-2pt}
\setlength{\itemindent}{30pt}
\item[{\rm type\ I}\,] 
$\{1,2,4,5,\ldots ,3m+1,3m+2,3m+4,3m+7,\ldots ,3(g-m)-5\}$, 
\item[{\rm type\ I\hspace{-0.3mm}I}] 
$\{1,2,4,5,\ldots ,3m+1,3m+2,3m+5,3m+8,\ldots ,3(g-m)-4\}$. 
\end{itemize}
\ \ In the case where $P$ is an ordinary ramification point: 
\vspace{-2mm}
\begin{itemize}
\setlength{\itemsep}{-2pt}
\setlength{\itemindent}{30pt}
\item[{\rm type\ I}\,] 
$\{1,2,3,\ldots ,2m+1,2m+2,2m+3,2m+5,\ldots ,2(g-m)-3\}$, 
\item[{\rm type\ I\hspace{-0.3mm}I}] 
$\{1,2,3,\ldots ,2m+1,2m+2,2m+4,2m+6,\ldots ,2(g-m)-2\}$. 
\end{itemize}
\end{theorem}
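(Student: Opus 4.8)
The plan is to translate the computation of the Weierstrass semigroup at $P$ into the combinatorics of the lattice polygon of $C$, after first putting $C$ in standard position. Since $C$ is trigonal and not a smooth plane curve, the classical realization of trigonal curves on rational normal scrolls (hence on Hirzebruch surfaces), combined with Theorem \ref{martens}, lets me take $C$ relatively minimal on some $\Sigma_e$ with $C\sim 3\Delta_0+bF$ and the trigonal covering equal to $\pi|_C$ for the ruling $\pi\colon\Sigma_e\to\mathbb P^1$; matching invariants gives $e=g-2-2m$ and $b=2g-3m-2$. Then $\square_C$ is a trapezoid of height $3$ whose two horizontal edges have lengths $b$ and $b-3e=3m-g+4$ (a triangle in the extremal case $b=3e$), so $(g,m)$ is read off from the edge lengths. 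A ramification point $P$ of $\pi|_C$ is then either \emph{total}, with $\pi^{*}(\mathrm{pt})|_C=3P$, so $3P\sim\mathfrak g$ for the trigonal pencil $\mathfrak g$, or \emph{ordinary}, with $\pi^{*}(\mathrm{pt})|_C=2P+P'$ and $2P+P'\sim\mathfrak g$.

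Next I would compute $h^0(C,nP)$ for all $n$, the gap sequence being the complement of $\{n:h^0(nP)>h^0((n-1)P)\}$; Riemann--Roch makes every sufficiently large $n$ a non-gap, so only finitely many values need checking. The multiples of $3$ are governed by $h^0((3j)P)=h^0(j\mathfrak g)$, and $h^0(j\mathfrak g)=h^0\!\big(\mathbb P^1,\mathcal O(j)\oplus\mathcal O(j-m-2)\oplus\mathcal O(j-(g-m))\big)$ by the Tschirnhaus bundle $f_{*}\mathcal O_C\cong\mathcal O\oplus\mathcal O(-m-2)\oplus\mathcal O(-(g-m))$ of the covering; this fixes the coarse shape of the semigroup, with break points near $3m$ and $3(g-m)$ (resp. $2m$ and $2(g-m)$ in the ordinary case). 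The residual values $h^0((3j\pm1)P)$ — the genuinely Weierstrass part of the semigroup — are then pinned down by the toric description of $H^0(C,nP)$ in terms of the lattice points of $\square_C$ and the local type of $P$ given in \cite{kaw}. The conclusion I would aim for is that the semigroup is determined up to a single binary invariant of $P$, namely whether $P$ is a base point of the residual series $|K_C-(m+1)\mathfrak g|$ — geometrically, whether the point $P$ of $C\subset S$ lies on a $T$-invariant divisor of $S$, a feature not visible in $\square_C$ — and the two options yield exactly types I and II, in both the total and the ordinary case.

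Finally I would read off the four gap sequences, which is a finite bookkeeping of arithmetic progressions of common difference $3$ (total case) or $2$ (ordinary case) whose endpoints are the break points found above, checking that the number of gaps is $g$ in each case. I expect the main obstacle to be precisely the determination of the non-multiple-of-$3$ values $h^0((3j\pm1)P)$ and the proof that no possibilities beyond the two tails labelled I and II occur: these cannot be recovered from the divisor class of $\mathfrak g$ alone, and it is here that Corollary \ref{mthm2} (which is what guarantees that the polygon $\square_C$ is available to begin with) together with the lattice-point formula of \cite{kaw} does the essential work, the Tschirnhaus bundle only fixing the part of the semigroup already visible from the scroll.
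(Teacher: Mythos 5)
First, a point of reference: the paper does not prove Theorem \ref{gap} at all; it is quoted from Coppens \cite{cop1,cop2}, and the closest the paper comes is Section \ref{appl}, where the gap sequences are re-derived \emph{a posteriori} in each of the Kato--Horiuchi normal forms (Theorem \ref{katoh}) by re-embedding $C$ in a toric surface adapted to the ramification point and applying Corollary 1.6 of \cite{kaw}. Your outline is in the same spirit as that section, but as a proof of Theorem \ref{gap} it has genuine gaps. The scroll setup is fine (the identifications $e=g-2-2m$, $b=2g-3m-2$, the trapezoid of height $3$, and $h^{0}(3jP)=h^{0}(j\mathfrak{g})$ via the pushforward bundle for a total ramification point are all correct), but this only recovers the part of the semigroup divisible by $3$. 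The substantive content of the theorem is precisely the values $h^{0}(nP)$ for $n$ not divisible by the ramification index, and your argument for these is: there is a single binary invariant, and its two values give types I and II. That is asserting the conclusion. The identification of the invariant with ``$P$ is a base point of $|K_{C}-(m+1)\mathfrak{g}|$'' is never verified, and its geometric gloss --- that $P$ lies on a $T$-invariant divisor of $S$ --- cannot be right as stated: on $\Sigma_{e}$ the $T$-invariant divisors are two sections and two distinguished fibers, so this condition depends on the chosen torus action (the two $T$-invariant fibers can be moved by an automorphism of the base), and a general total ramification point lies on no $T$-invariant divisor yet still has a well-defined type.

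Second, the appeal to the lattice-point formula of \cite{kaw} does not close the gap, because that formula computes the gap sequence at a point $P=C\cap D$ with $D$ a $T$-invariant divisor in a toric embedding satisfying its hypotheses; for a ramification point of the trigonal pencil this requires replacing the Hirzebruch embedding by one adapted to the local equation of $C$ at $P$ (this is exactly the role of the normal form (\ref{genjutsushi}), its transformation (\ref{bubbleslime}), and the blow-ups producing Fig.~\ref{fig37} and Fig.~\ref{fan2} in Section \ref{appl}), and the type I/II dichotomy is encoded in that re-embedded polygon, not in the trapezoid $\square_{C}$ you work with. In other words, the ``local type of $P$'' you invoke is precisely the Kato--Horiuchi analysis, which you neither state nor prove, so the argument is circular at the decisive step. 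The ordinary ramification case is in even worse shape: there $2P+P'\sim\mathfrak{g}$, so even the multiples $h^{0}(2jP)=h^{0}(j\mathfrak{g}-jP')$ are not given by the Tschirnhaus bundle alone and require control of the residual point $P'$, and the needed coordinate change (case (iii) of Theorem \ref{katoh}, the parity condition on $4A^{3}+27B^{2}$) is not addressed. Finally, to make the plan non-circular you should also note that on a Hirzebruch surface the fact that the trigonal pencil is cut by the ruling is Theorem \ref{martens}; invoking Corollary \ref{mthm2} (this paper's main result) for that is unnecessary and, in a proof of a theorem the paper quotes as input, undesirable.
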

%
%
Besides, Kato and Horiuchi presented the following criterion for 
distinguishing the above types. 
\begin{theorem}[\cite{kat}]\label{katoh}
Let $C$ be a trigonal curve of genus $g\geq 5$ and Maroni 
invariant $m$. Then $C$ has a plane model defined by 
\begin{eqnarray}\label{genjutsushi}
y^{3}+x^{\mu}A(x)y+x^{\nu}B(x)=0,
\end{eqnarray}
where ${\rm deg}A(x)+\mu =2m+4$, ${\rm deg}B(x)+\nu =3m+6$ and 
$A(0)B(0)\neq 0$. 
\begin{itemize}
\setlength{\itemsep}{-2pt}
\item[{\rm (i)}] If $\mu \geq \nu =1$, there exists a total ramification 
point of type {\rm I} over $x=0$. 
\item[{\rm (ii)}] If $\mu \geq \nu =2$, there exists a total ramification 
point of type {\rm I\hspace{-0.3mm}I} over $x=0$. 
\item[{\rm (iii)}] If $\mu =\nu =0$ and the order of zero 
of $4A(x)^{3}+27B(x)^{2}$ at $x=0$ is odd, there exists an ordinary 
ramification point of type {\rm I} over $x=0$. 
\item[{\rm (iv)}] If $\nu >\mu =1$, there exists an ordinary 
ramification point of type {\rm I\hspace{-0.3mm}I} over $x=0$. 
\item[{\rm (v)}] Otherwise, there exist no ramification points 
over $x=0$. 
\end{itemize}
\end{theorem}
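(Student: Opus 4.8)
The plan is to reduce the statement to two local computations at the points of $C$ over $x=0$: first the ramification behaviour there, then the vanishing orders at the ramification point of a basis of holomorphic differentials read off from the model \eqref{genjutsushi}. Recall that $n$ is a Weierstrass gap at a point $P$ exactly when some $\omega\in H^{0}(C,K_{C})$ vanishes at $P$ to order precisely $n-1$; hence the gap sequence at $P$ equals $\{\,1+{\rm ord}_{P}\omega\mid\omega\in H^{0}(C,K_{C})\setminus\{0\}\,\}$. Since Theorem \ref{gap} already limits the possibilities to four sequences, it in fact suffices to decide whether $P$ is a total or an ordinary ramification point (ramification index $3$ or $2$) and then to locate one further break-point.

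First I would carry out the local analysis over $x=0$ via the Newton polygon of $F(x,y)=y^{3}+x^{\mu}A(x)y+x^{\nu}B(x)$, using $A(0)B(0)\neq0$. In case (i), $\mu\ge\nu=1$ gives $y^{3}\sim-B(0)x$, so $y$ is a local parameter of $C$ and $({\rm ord}_{P}x,{\rm ord}_{P}y)=(3,1)$: total ramification. In case (ii), $\mu\ge\nu=2$ gives an analytically irreducible branch with $y^{3}\sim-B(0)x^{2}$ and $({\rm ord}_{P}x,{\rm ord}_{P}y)=(3,2)$: total ramification, but now $y$ has order $2$. In case (iv), $\nu>\mu=1$ makes the cubic in $y$ split over $x=0$ into one smooth unramified sheet ($y\sim x^{\nu-1}\cdot\text{unit}$, order $\nu-1\ge1$) and one branch with $y^{2}\sim-A(0)x$, i.e.\ an ordinary ramification point with $({\rm ord}_{P}x,{\rm ord}_{P}y)=(2,1)$. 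In case (iii), $\mu=\nu=0$ means the fibre over $x=0$ is $\{y^{3}+A(0)y+B(0)=0\}$; this has a double root $y_{0}$ iff the $y$-discriminant $4A(x)^{3}+27B(x)^{2}$ vanishes at $x=0$, and then $y_{0}$ also solves $3y_{0}^{2}+A(0)=0$, so ${\rm ord}_{P}F_{y}=1$; the two colliding sheets form a genuine branch point exactly when the discriminant vanishes to odd order (since $\sqrt{x^{2\ell+1}}$ is irrational over $\mathbb{C}(x)$ whereas $\sqrt{x^{2\ell}}$ is not), giving an ordinary ramification point with $({\rm ord}_{P}x,{\rm ord}_{P}(y-y_{0}))=(2,1)$. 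In all remaining situations the fibre over $x=0$ is reduced, or the discriminant vanishes to even order, so there is no ramification over $x=0$: this is case (v). The only subtle point here is the parity dichotomy of case (iii).

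Next I would compute the vanishing orders. As in the genus-four computation in Section \ref{exam}, the description $K_{C}\cong(K_{\Sigma}+C)|_{C}$ on the Hirzebruch surface underlying \eqref{genjutsushi} produces an explicit basis of $H^{0}(C,K_{C})$ consisting of adjoint differentials $x^{i}\,dx/F_{y}$ and $x^{i}y\,dx/F_{y}$, whose index ranges are governed by $g$, by the Maroni invariant $m$, and by the adjoint conditions at the singularities of the affine model (themselves dictated by $\mu,\nu$). Inserting the values of ${\rm ord}_{P}$ for $x$, $y$, $dx$ and $F_{y}=3y^{2}+x^{\mu}A$ from the previous step, each of the two strands yields an arithmetic progression of vanishing orders; adding $1$ gives two residue classes modulo $3$ (in the total cases) or modulo $2$ (in the ordinary cases), together with the precise endpoints listed in Theorem \ref{gap}. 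The dichotomy type I versus type II is then forced by whether ${\rm ord}_{P}y=1$ or $2$ in the total cases and by the analogous position of the adjoint bound in the ordinary cases; concretely it reduces to checking whether $3m+4$ (respectively the corresponding even integer) is realised as $1+{\rm ord}_{P}\omega$, which a direct count of the two progressions settles.

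I expect the last step to be the main obstacle: correctly pinning down the index ranges of the two strands — that is, the adjoint conditions at the singular points of the plane model and their dependence on $\mu,\nu$ — and then verifying that the two progressions are disjoint, of total length $g$, and have exactly the asserted endpoints. This is where the normalisations $\deg A+\mu=2m+4$, $\deg B+\nu=3m+6$ and the chosen convention for the Maroni invariant must all be tracked simultaneously. An alternative would be to route the argument through the lattice polygon of $C$: the model \eqref{genjutsushi} determines $\square_{C}$ and the position on its boundary of the fibre through $P$, after which the gap sequence could be extracted using \cite{kaw} and Theorem \ref{mthm}, with cases (i)--(v) corresponding to the local shape of $\square_{C}$ near the relevant edge and the type I/II dichotomy to a lattice-point condition there.
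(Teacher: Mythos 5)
A preliminary remark on the comparison: the paper does not prove Theorem \ref{katoh} at all — it is quoted from Kato--Horiuchi \cite{kat} — so there is no in-paper proof to measure you against; the closest material is Section \ref{appl}, where the gap sequences in cases (i)--(iv) are re-derived via lattice polygons and Corollary 1.6 of \cite{kaw}, which is precisely the ``alternative route'' you sketch in your last sentence. On the merits, your local Newton-polygon analysis of cases (i), (ii) and (iv), and the parity criterion in (iii), are correct, and the reduction ``determine the ramification index, then locate one break point'' is a legitimate use of Theorem \ref{gap}.

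The genuine gap is in case (v). The ``remaining situations'' are not all covered by ``the fibre over $x=0$ is reduced, or the discriminant vanishes to even order'': they include $\mu\geq 2$, $\nu\geq 3$, where the fibre of the model \eqref{genjutsushi} over $x=0$ is the non-reduced scheme $y^{3}=0$ and the origin is a singular point of the model. For such local data ramification can perfectly well occur: take $y^{3}-3x^{2}y+2x^{3}+x^{4}=0$ (so $\mu=2$, $\nu=3$, $A(0)B(0)\neq 0$); the substitution $y=x+u$ gives $u^{3}+3xu^{2}+x^{4}=0$, whose Newton polygon has an edge from $(1,2)$ to $(4,0)$, i.e.\ a branch with $u\sim x^{3/2}$, hence an ordinary ramification point over $x=0$. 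So (v) cannot be proved by the local computation alone; one must invoke the global normalization of the model — the exact equalities $\deg A+\mu=2m+4$, $\deg B+\nu=3m+6$ tied to the Maroni invariant — to rule out $x^{2}\mid x^{\mu}A(x)$ and $x^{3}\mid x^{\nu}B(x)$ holding simultaneously (otherwise $y\mapsto xy$ followed by division by $x^{3}$ yields a model whose degree data correspond to $m-1$, contradicting the invariance of $m$). Your proposal never uses these global hypotheses in the local step, so as written the case analysis is not exhaustive. In addition, the part of the theorem that actually distinguishes type I from type II — the $m$-dependent break in the gap sequence — is exactly the bookkeeping you defer as ``the main obstacle'': the adjoint-basis (or, equivalently, lattice-polygon) computation in each of the four ramified cases. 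Until that is carried out, the proposal establishes the ramification indices over $x=0$ but not the typing, which is the substance of the statement.
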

%
%
In order to interpret Theorem \ref{katoh} in terms of lattice polygons, 
%
we transform the defining equation 
(\ref{genjutsushi}) in the case (iii). Note that we need the 
condition $m<(g-2)/2$ in this case, since $2(g-m)-3$ must be at least $g$. 
We can write two polynomials as 
$A(x)=\sum_{i=1}^{2m+4}a_{i}x^{i}-3k^{2}$ and 
$B(x)=\sum_{i=1}^{3m+6}b_{i}x^{i}+2k^{3}$, where $k\neq 0$. We formally 
set $a_{2m+5}=\cdots =a_{3m+6}=0$, and define 
$\alpha ={\rm min}\{i\mid a_{i}\neq 0\}$, 
$\beta ={\rm min}\{i\mid ka_{i}+b_{i}\neq 0\}$, 
%
%
$A_{1}(x)=\sum_{i=\beta}^{3m+6}a_{i}x^{i}$, 
$B_{1}(x)=\sum_{i=\beta}^{3m+6}b_{i}x^{i}$ and 
$E(x)=A(x)-A_{1}(x)+3k^{2}$. 
%
Then we have ${\rm mindeg}(kA_{1}(x)+B_{1}(x))=\beta $, where the 
notation `{\rm mindeg}' denotes the minimal degree of a polynomial. 
Since $m>g/2$, we see that $\beta $ is less than $2m+4$. 
Let us check that ${\rm mindeg}(4A(x)^{3}+27B(x)^{2})=\beta <2\alpha$. 
By a simple computation, we have 
\begin{eqnarray}\label{ruzami}
\begin{array}{cl}
 &\!\! 4A(x)^{3}+27B(x)^{2}\\
= &\!\! 4(E(x)+A_{1}(x))^{3}+27(-kE(x)+B_{1}(x))^{2}-
36k^{2}(E(x)+A_{1}(x))^{2}\\
 &\!\! +108k^{3}(kA_{1}(x)+B_{1}(x)). 
\end{array}
\end{eqnarray}
%
%
%
In the case where $E(x)=0$, $\beta$ is equal to $\alpha$ by 
definition, and the equality ${\rm mindeg}(4A(x)^{3}+27B(x)^{2})=\beta$ 
follows from (\ref{ruzami}). On the other hand, if $E(x)\neq 0$, 
we have ${\rm mindeg}(4A(x)^{3}+27B(x)^{2})=
{\rm min}\{2{\rm mindeg}E(x),\beta \}=\beta $ by (\ref{ruzami}) 
and its oddness. Note that ${\rm mindeg}E(x)=\alpha$ in 
this case. Consequently, if we perform a coordinate 
transformation $y'=y-k$, and put $y'=y$ again, then the 
defining equation (\ref{genjutsushi}) is rewritten as 
\begin{eqnarray}\label{bubbleslime}
y^{3}+3ky^{2}+x^{\alpha}C(x)y+x^{\beta}D(x)=0,
\end{eqnarray}
where $\alpha$ and $\beta$ are positive integers such that 
$\beta$ is odd and $\beta <{\rm min}\{2\alpha,2m+4\}$, 
${\rm deg}C(x)+\alpha =2m+4$, ${\rm deg}D(x)+\beta =3m+6$ and 
$C(0)D(0)\neq 0$. 
%
%
%
%
%

Now we are in a position to translate Theorem \ref{katoh} in terms of the 
geometry of lattice polygons. We embed $C$ in a toric surface by blowing 
up repeatedly. 
Then the lattice polygon $\square_{C}$ associated to $C$ is drawn as 
in Fig. \ref{fig37}. 
%
\begin{figure}[h]\hspace{-5mm}
\begin{picture}(450,136)
\setlength\unitlength{1.2pt}
\linethickness{0.1mm}
\multiput(35,78)(0,10){4}{\line(1,0){120}}
\multiput(35,78)(10,0){13}{\line(0,1){30}}
\color{white}
\put(58,88){\line(1,0){22}}
\put(65,82){\line(0,1){12}}
\put(75,82){\line(0,1){12}}
\put(108,98){\line(1,0){42}}
\multiput(115,91)(10,0){4}{\line(0,1){10}}
\color{black}
\multiput(34.6,108)(0.33,0.11){4}{\line(1,-3){10}}
\multiput(34.6,107.7)(0.1,0.3){4}{\line(4,-1){120}}
\multiput(44.6,77.4)(0,0.29){5}{\line(1,0){110}}
\put(45,78){\circle*{3}}
\put(155,78){\circle*{3}}
\put(55,88){\circle*{3}}
\put(115,88){\circle*{3}}
\put(35,108){\circle*{3}}
\put(35,78){\circle*{3}}
\put(13,106){$(0,3)$}
\put(59,86){$(\mu ,1)$}
\put(109,93.5){$(2m+4,1)$}
\put(38,68){$(1,0)$}
\put(135,68){$(3m+6,0)$}
\put(26,72){$O$}
\put(80,63){Case (i)}
\multiput(35,15)(0,10){4}{\line(1,0){120}}
\multiput(35,15)(10,0){13}{\line(0,1){30}}
\multiput(34.5,45)(0.29,0){5}{\line(0,-1){10}}
\multiput(34.6,34.6)(0.16,0.32){4}{\line(5,-2){50}}
\multiput(34.6,44.7)(0.1,0.3){4}{\line(4,-1){120}}
\multiput(84.6,14.4)(0,0.29){5}{\line(1,0){70}}
\color{white}
\put(61,25){\line(1,0){21.5}}
\linethickness{0.7mm}
\multiput(62,20.7)(1.7,0){11}{\line(0,1){10}}
\linethickness{0.4pt}
\put(108,35){\line(1,0){42}}
\multiput(115,28)(10,0){4}{\line(0,1){10}}
\color{black}
\put(85,15){\circle*{3}}
\put(155,15){\circle*{3}}
\put(85,25){\circle*{3}}
\put(115,25){\circle*{3}}
\put(35,45){\circle*{3}}
\put(35,35){\circle*{3}}
\put(35,15){\circle*{3}}
\put(13,45){$(0,3)$}
\put(13,31){$(0,2)$}
\put(62.3,23.5){$(\alpha ,1)$}
\put(109,30.5){$(2m+4,1)$}
\put(51,5){$(\beta ,0)$}
\put(72,7.7){\vector(2,1){9}}
\put(136,5){$(3m+6,0)$}
\put(26,9){$O$}
\put(80,0){Case (iii)}
\multiput(200,78)(0,10){4}{\line(1,0){120}}
\multiput(200,78)(10,0){13}{\line(0,1){30}}
\multiput(199.6,107.7)(0.32,0.22){4}{\line(2,-3){20}}
\multiput(199.6,107.7)(0.1,0.3){4}{\line(4,-1){120}}
\multiput(219.6,77.4)(0,0.29){5}{\line(1,0){100}}
\color{white}
\put(217.5,88){\line(1,0){21}}
\multiput(217.5,83.3)(0.4,0){12}{\line(0,1){9}}
\put(220,82.5){\line(0,1){12}}
\put(230,82.5){\line(0,1){12}}
\put(273,98){\line(1,0){42}}
\multiput(280,91)(10,0){4}{\line(0,1){10}}
\color{black}
\put(220,78){\circle*{3}}
\put(320,78){\circle*{3}}
\put(240,88){\circle*{3}}
\put(280,88){\circle*{3}}
\put(200,108){\circle*{3}}
\put(200,78){\circle*{3}}
\put(178,106){$(0,3)$}
\put(218,86){$(\mu ,1)$}
\put(274,93.5){$(2m+4,1)$}
\put(207,68){$(2,0)$}
\put(300,68){$(3m+6,0)$}
\put(191,72){$O$}
\put(245,63){Case (ii)}
\multiput(200,15)(0,10){4}{\line(1,0){120}}
\multiput(200,15)(10,0){13}{\line(0,1){30}}
\color{white}
\put(199,25){\line(1,0){8}}
\put(200,19.5){\line(0,1){12}}
\put(273,35){\line(1,0){42}}
\multiput(280,28)(10,0){4}{\line(0,1){10}}
\color{black}
\multiput(199.5,45)(0.34,0.17){4}{\line(1,-2){10}}
\multiput(209.5,24.7)(0.17,0.34){4}{\line(2,-1){20}}
\multiput(199.6,44.7)(0.1,0.3){4}{\line(4,-1){120}}
\multiput(229.6,14.4)(0,0.29){5}{\line(1,0){90}}
\put(230,15){\circle*{3}}
\put(320,15){\circle*{3}}
\put(210,25){\circle*{3}}
\put(280,25){\circle*{3}}
\put(200,45){\circle*{3}}
\put(200,15){\circle*{3}}
\put(178,43){$(0,3)$}
\put(187.5,23){$(1,1)$}
\put(274,30.5){$(2m+4,1)$}
\put(217,5){$(\nu ,0)$}
\put(300,5){$(3m+6,0)$}
\put(191,9){$O$}
\put(245,0){Case (iv)}
\end{picture}
\caption{}\label{fig37}
\end{figure}
%
In the case (i), the fan $\Delta_{S}$ associated to $S$ is as in 
Fig. \ref{fan2}. 
%
\begin{figure}[h]\hspace{-13mm}
\begin{picture}(400,109)
\setlength\unitlength{1.2pt}
\linethickness{0.1mm}
\multiput(160,22)(0,10){6}{\line(1,0){60}}
\multiput(170,12)(10,0){5}{\line(0,1){70}}
\multiput(160,31.4)(0,0.3){5}{\line(1,0){40}}
\multiput(199.4,12)(0.3,0){5}{\line(0,1){70}}
\multiput(199.4,32)(0.35,-0.1){4}{\line(1,4){12}}
\multiput(199.4,32.2)(0.22,-0.22){5}{\line(-1,-1){19}}
\multiput(199.4,32.2)(0.14,-0.28){5}{\line(-2,-1){34}}
\multiput(199.4,32.3)(0.1,-0.3){5}{\line(-3,-1){38}}
\put(200,42){\circle*{3}}
\put(210,72){\circle*{3}}
\put(200,22){\circle*{3}}
\put(190,22){\circle*{3}}
\put(180,22){\circle*{3}}
\put(170,22){\circle*{3}}
\put(190,32){\circle*{3}}
\put(182,87){$\sigma(D_{1})$}
\put(210,84.5){$\sigma(D_{2})$}
\put(136,16){$\sigma(D_{6})$}
\put(135,30){$\sigma(D_{7})$}
\color{white}
\put(213.5,72){\line(1,0){7}}
\color{black}
\put(214,69){$(1,m+2)$}
\put(160,10){\circle*{2}}
\put(165,7.5){\circle*{2}}
\put(170.5,6){\circle*{2}}
\put(187,0){$\Delta_{S}$}
%
\end{picture}
\caption{}\label{fan2}
\end{figure}
%
Considering the process of blowing-ups, we see that an intersection 
point $P=C\cap D_{6}$ is a unique point over 
the origin of the plane model (\ref{genjutsushi}). On the other hand, 
Theorem \ref{mthm} and Fact \ref{jinmencho} show that the fiber $F$ 
of a trigonal covering from $C$ to $\mathbb P^{1}$ is $F\sim D_{4}+
2D_{5}+3D_{6}+D_{7}$, which implies that $P$ is a total ramification 
point. By applying Corollary 1.6 in \cite{kaw}, we can to determine the 
gap sequence at $P$ as 
$$\{j\mid \mbox{the line $3X+Y=3+j$ has a lattice point in 
Int\,$\square_{C}$}\}.$$
For better understanding, we attach to each lattice point an 
integer $j$ such that the line $3X+Y=3+j$ passes through 
it (see Fig. \ref{lat}). 
%
\begin{figure}[h]\hspace{-14mm}
\begin{picture}(400,85)
\setlength\unitlength{1.2pt}
\multiput(50,10)(0,10){4}{\line(1,0){140}}
\multiput(60,0)(10,0){13}{\line(0,1){50}}
\multiput(59.7,40)(0.24,0.08){3}{\line(1,-3){10}}
\multiput(59.7,39.8)(0.08,0.22){3}{\line(4,-1){120}}
\multiput(70,9.6)(0,0.25){3}{\line(1,0){110}}
\multiput(77.5,56)(0.3,0.1){4}{\line(1,-3){19}}
\put(50,10){\vector(1,0){140}}
\put(60,0){\vector(0,1){50}}
\put(50,9.85){\line(1,0){140}}
\put(60.15,0){\line(0,1){50}}
\put(90,20){\circle*{3}}
\put(50,0.7){$O$}
\put(186,0.7){$X$}
\put(50,45){$Y$}
\put(35,60){$3X+Y=3+j$}
\multiput(210,10)(0,10){4}{\line(1,0){140}}
\multiput(220,0)(10,0){13}{\line(0,1){50}}
\multiput(219.6,40)(0.24,0.08){3}{\line(1,-3){10}}
\multiput(219.7,39.8)(0.08,0.22){3}{\line(4,-1){120}}
\multiput(230,9.6)(0,0.25){3}{\line(1,0){110}}
\put(210,10){\vector(1,0){140}}
\put(220,0){\vector(0,1){50}}
\put(210,9.85){\line(1,0){140}}
\put(220.15,0){\line(0,1){50}}
\put(210,0.7){$O$}
\put(346,0.7){$X$}
\put(210,45){$Y$}
\color{white}
\put(228,20){\line(1,0){4}}
\put(230,17){\line(0,1){5.5}}
\put(228,30){\line(1,0){4}}
\put(230,27){\line(0,1){5.5}}
\multiput(237.8,9.5)(0,0.3){3}{\line(1,0){4}}
\put(240,6.9){\line(0,1){5.5}}
\put(237.5,20){\line(1,0){4}}
\put(240,17){\line(0,1){5.5}}
\put(237.5,30){\line(1,0){4}}
\put(240,27){\line(0,1){5.5}}
\multiput(247.6,9.5)(0,0.3){3}{\line(1,0){4}}
\put(250,6.9){\line(0,1){5.5}}
\multiput(270,33)(10,0){2}{\line(0,1){6}}
\multiput(310,23)(10,0){2}{\line(0,1){6}}
\color{black}
\put(228,17.5){\scriptsize 1}
\put(228,27.5){\scriptsize 2}
\put(238,7.5){\scriptsize 3}
\put(238,17.5){\scriptsize 4}
\put(238,27.5){\scriptsize 5}
\put(248,7.5){\scriptsize 6}
\put(261,33.8){\scriptsize $3m+2$}
\put(301,23.8){\scriptsize $6m+7$}
\put(250,30){\circle*{2}}
\put(290,20){\circle*{2}}
\qbezier[12](250,30)(252,35)(259,36)
\qbezier[12](290,20)(292,25)(299,26)
%
\end{picture}
\caption{}\label{lat}
\end{figure}
%
Then we can find the gap sequence 
$$\{1,2,4,5,\ldots ,3m+1,3m+2,3m+4,\ldots ,6m+7\}$$
as a set of integers contained in Int\,$\square_{C}$. Since 
the genus of $C$ is equal to the number of lattice points contained 
in Int\,$\square_{C}$, we have $g=3m+4$ in this case. Hence 
the above gap sequence at a total ramification point $P$ is truly of 
type I in Theorem \ref{gap}. Similarly, for the remaining cases in 
Fig. \ref{fig37}, we obtain the gap sequence at a ramification point 
and the genus of $C$ as follows. 
\begin{eqnarray*}
\!\begin{array}{cl}
{\rm (ii)} &\!\!\!\! \{j\mid \mbox{the line $3X+2Y=6+j$ has a lattice 
point in Int\,$\square_{C}$}\},\ g=3m+3,\\[1mm]
{\rm (iii)} &\!\!\!\! \displaystyle \{j\mid \mbox{the line $2X+\beta 
Y=2\beta+j$ has a lattice point in Int\,$\square_{C}$}\},\ \displaystyle 
g=3m-\frac{\beta -9}{2},\\
{\rm (iv)} &\!\!\!\! \{j\mid \mbox{the line $2X+Y=3+j$ has a 
lattice point in Int\,$\square_{C}$}\},\ g=3m+3.
\end{array}
\end{eqnarray*}
Consequently, in each case, the result of Theorem \ref{gap} can be 
visualized in a similar way as in Fig. \ref{lat}. 

This idea is applicable for the cases of higher gonality. By 
Theorem \ref{mthm}, a lattice polygon associated to a $k$-gonal 
curve $C$ can be drawn 
as a polygon with height $k$ and sufficiently large width. Assume that 
there exists an oblique side which has no lattice points except for two 
end points, and denote by $D$ the $T$-invariant divisor corresponds to 
this side. In this case, a point $P=C\cap D$ is a total ramification 
point of a gonality pencil on $C$, and moreover, $P$ satisfies the 
assumption in Corollary 1.6 in \cite{kaw}. Hence one can determine the 
Weierstrass gap sequence at $P$ by moving the oblique side similarly to 
Fig. \ref{lat}. This fact suggests the possibility of the classification 
of gap sequences at total ramification points of a curve on a toric 
surface. We will deal with this prospective problem in future work. 
\\[7mm]
{\it Acknowledgements.}
The author would like to express his sincere gratitude to Professor 
Kazuhiro Konno for his helpful advice and suggestions. He also 
thanks Doctor Takeshi Harui for uncountable fruitful discussions. 
In revising the article, Professor Wouter Castryck made a significant 
observation. 
\address{
Center for Basic Eduction Support\\
Faculty of Engineering\\
Kyushu Sangyo University\\
Higashi-ku, Fukuoka 813-8503\\
Japan}
{kawa@ip.kyusan-u.ac.jp}
\end{document}